\makeatletter\@addtoreset{equation}{section} \makeatother
\newcommand{\nad}[2]{\genfrac{}{}{0pt}{}{#1}{#2}}
\newcommand{\p}{\partial}
\newcommand{\mref}[1]{(\ref{#1})}
\renewcommand{\a}{\alpha}
\def\beq#1#2\eeq{%
        \begin{equation}%
        \label{#1}%
            #2%
        \end{equation}%
    }
\newtheorem{theorem}[equation]{Theorem}
\newtheorem{proposition}[equation]{Proposition}
\newtheorem{lemma}[equation]{Lemma}
\newtheorem{corollary}[equation]{Corollary}
\theoremstyle{remark}
\newtheorem{remark}[equation]{Remark}
\theoremstyle{definition}
\newtheorem{example}[equation]{Example}
\newtheorem{definition}[equation]{Definition}
\newtheorem{convention}[equation]{Convention}
\DeclareMathAlphabet{\mathbbold}{U}{bbold}{m}{n}
\def \k {\mathbbold{k}}
\def \A {\mathbb{A}}
\def \Q {\mathbb{Q}}
\def \C {\mathbb{C}}
\def \R {\mathbb{R}}
\def \Z {\mathbb{Z}}
\def \SS {\mathbb{S}}
\def \Sy {\mathrm{S}}
\def \LL {\mathcal{L}}
\def \FF {\mathcal{F}}
\def \K {\mathcal{K}}
\def \Sympl {\mathfrak{S}}
\def \codim {\mathrm{codim}}
\def \mult  {\mathrm{mult}}
\def \supp  {\mathrm{supp}}
\def \rk  {\mathrm{rk}}
\def \Res  {\mathrm{Res}}
\def \ii  {\mathrm{i}}
\def \lb {\upshape{(}}
\def \rb {\upshape{)}}
\def \blfloor {\Big\lfloor}
\def \brfloor {\Big\rfloor}
\def \le {\leqslant}
\def \ge {\geqslant}
\def \nlb {\nolinebreak}
\title{On unitary submodules in the polynomial representations of rational Cherednik algebras}
\author{M.\,Feigin, C.\,Shramov}
\begin{document}

\begin{abstract}
We consider representations of rational Cherednik algebras which are particular ideals in the ring of
polynomials. We investigate convergence of the integrals which express the Gaussian inner product on
these representations. We derive that the integrals converge for the minimal
submodules in types $B$ and $D$  for the
singular values suggested by Cherednik with at most one exception, hence the corresponding modules are unitary. The analogous result on unitarity of the minimal submodules in type $A$ was obtained by Etingof and Stoica, we give a different
proof of convergence of the Gaussian product in this case. We also obtain partial
results on unitarity of the minimal submodule in the case of exceptional Coxeter groups
and group $B$ with unequal parameters.
\end{abstract}

\address{School of Mathematics and Statistics, University of Glasgow, 15 University Gardens,
Glasgow G12 8QW, UK}

\email{misha.feigin@glasgow.ac.uk}

\address{Steklov Mathematical Institute, Gubkina str., 8, Moscow, 119991, Russia}

\email{shramov@mccme.ru}

\maketitle
\tableofcontents

\section{Introduction}
\label{section:intro}

Let ${\mathcal{R}} \subset \R^N$ be an irreducible Coxeter root system, let
$W$ be the corresponding Coxeter group which is generated by
orthogonal reflections $s_{\a}$ with respect to the hyperplanes
$(\a,x)=0$ where $\a \in \mathcal{R}$, $x=(x_1,\ldots,x_N)$ and
$(\cdot, \cdot)$ denotes the standard inner product in $\R^N$ (see
\cite{Hum}). Let $c: \mathcal{R} \to \R$ be a $W$-invariant
function. The corresponding rational Cherednik algebra $H_c(W)$ (see
\cite{EG}) is generated by the group algebra $\C W$ and two
commutative polynomial subalgebras $\C[x]=\C[x_1,\ldots,x_N]$,
$\C[y_1,\ldots, y_N]$. The algebra can be defined by its faithful
representation $\phi$ in the space of polynomials $\C[x]$. In this
representation $\phi|_{\C W}$ is the reflection representation of the
group algebra $\C W$, $\phi(p(x))$ is the operator of multiplication by
$p(x)$, and $\phi(p(y_1,\ldots,y_N))$ is the operator $p(\nabla_1,
\ldots, \nabla_N)$ where $\nabla_i$ are (commuting) Dunkl operators
\cite{D} corresponding to the basis vectors $\xi=e_i$: \beq{Dunkl}
\nabla_\xi = \partial_\xi- \sum_{\a \in \mathcal{R}_+} \frac{c(\a)
(\a, \xi)}{(\a,x)}(1-s_\a), \eeq where $\mathcal{R_+}$ is the set of
positive roots.

The study of unitary representations of the algebra $H_c(W)$ was
initiated in the paper by Etingof, Stoica, Griffeth \cite{ESG} (see
also \cite{Chered}). Recall that category $\mathcal O$ consists of
finitely generated modules such that all Dunkl operators act locally
nilpotently \cite{DO}. The simple objects $L_\tau$ in category
$\mathcal O$ are parametrized by the irreducible modules $\tau$ for
the corresponding Coxeter group $W$. The module $L_\tau$ carries a
$W$-invariant nondegenerate Hermitian form $(\cdot,\cdot)_\tau$ satisfying
$$
 (x_i u, v)_\tau = (u, y_i v)_\tau
$$
for any $u,v \in L_\tau$, for any $i=1,\ldots,N$.
This form is  unique up to
proportionality. The unitary modules are such that this form can be
scaled to be positive definite.

Of particular interest there is Cherednik's question on unitarity of
the minimal submodule $\SS_c$ in the polynomial representation
$\C[x]$ (see \cite[Section 4.6]{ESG} and~\cite{Chered}). This submodule has the form $\SS_c  \cong L_{\tau_c}$ where $\tau_c$ is an irreducible $W$-module which might depend on $c$.
Submodule $\SS_c$ is unique and
it is non-trivial only for the so-called singular multiplicities $c$
when the polynomial representation is reducible. The singular
multiplicities were completely determined in \cite{DJO}. In the case
of constant multiplicity function they are special rational numbers
with the denominators $d_i$ which are degrees of the corresponding
Coxeter group. Cherednik's question is whether the minimal submodule
$\SS_c$ is unitary when $c=1/d_i$.

 It is shown in \cite[Proposition 4.12]{ESG} (see also \cite{Chered}) that
unitarity of the minimal submodule follows from the convergence of
the integral
\begin{equation}\label{Gaussian}
\gamma_c(f) = \int_{\R^N} {|f(x)|^2 e^{-\frac12 |x|^2}}{\prod_{\a \in \mathcal{R}_+}
|(\a,x)|^{-2 c(\a)}} dx
\end{equation}
for all $f \in \SS_c$.
This is due to the observation that
$$
(f, f)_{\tau_c} = \lambda \gamma_c(e^{-\frac12 \sum_{i=1}^N \nabla_i^2} f)
$$
for some constant $\lambda \in \R$ independent of $f\in \SS_c$, and to the obvious inequality $\gamma_c(f) \ge 0$.
Thus the related question posed in \cite{Chered, ESG} is on convergence of the integral \mref{Gaussian} which in such case is called the Gaussian inner product.
It is shown in \cite[Theorem 5.14]{ESG} that this integral does
converge in the case ${\mathcal R}=A_{N-1}$ hence the questions
have positive answer in this case.

In this paper we show unitarity of the minimal submodules in the
polynomial representations for the algebras $H_c(W)$ in certain cases by establishing the
convergence of the above integral, in particular we give another proof of
convergence for the $A_{N-1}$ case (c.f. suggestions in \cite{Chered}). More exactly we show that
$$\Phi_f=|f(x)|\prod_{\a \in \mathcal{R}_+}|(\a,x)|^{-c(\a)}$$ is
locally $L^2$-integrable in $\R^N$ for any $f\in M$ where $M$ is an appropriate ideal. This implies, in particular, that the Cherednik's question
has positive answer in types $B$ and $D$ except for the singular
value $1/N$ in the case of $D_N$ with odd $N$ (see Theorem
\ref{mainth} which is our main result). In the latter case the
answer actually happens to be negative (\cite{Grpc}; see Proposition~\ref{prgrif} below).

The structure of the paper is as follows. In Section~\ref{section:ideals} we consider special ideals in the ring $\C[x]$ and find their generators which are singular polynomials for the corresponding rational Cherednik
algebras of type $H_c(G(m,p,N))$. In Section~\ref{section:L2} we recall the
algebro-geometric technique of checking local integrability and
apply it to our situation by producing an explicit log resolution of
the hyperplane arrangement corresponding to the poles of $\Phi_f$.
The explicit estimates for particular cases are gathered in
Section~\ref{section:brute-force}. In
Section~\ref{section:conclusions} we complete the proof of
convergence of integrals \mref{Gaussian} for $A, B, D$ cases and deduce unitarity of the corresponding
minimal representations. In Section \ref{section:moreunitarity} we present a few
results on the convergence of the Gaussian product \mref{Gaussian} mainly for the case of exceptional Coxeter groups (see
Propositions~\ref{next-to-h-singular-value}, \ref{proposition:other-exceptional-convergence}, and also Proposition \ref{Res}).
In the last section we discuss a few examples when the
minimal submodule is not unitary or when at least the integral \mref{Gaussian} is not convergent on the minimal submodule.

\section{$\pmb H_c$-invariant  ideals}
\label{section:ideals}

In this section we discuss special ideals in the polynomial ring
$\C[x]=\C[x_1,\ldots,x_N]$ which are 
invariant under certain appropriate rational 
Cherednik algebra. 
We specify singular polynomials generating these representations.

Let $\Delta(x_1, \ldots, x_p)$ be the
Vandermonde determinant, that is
$$\Delta(x_1, \ldots, x_p)=
\prod_{i<j}^p (x_i-x_j)$$
for $2\le p\le N$ and $\Delta(x_1)=1$.

Let $\nu=(\nu_1, \ldots, \nu_l)$ be a partition of $N$, that is
$\nu_i \ge \nu_{i+1}$, $\nu_i \in \Z_+$ and $\sum \nu_i = N$. Let
$l(\nu)=l$ be the length of the partition. Define the associated
polynomial
\begin{multline}\label{pnu}
p_\nu(x) =\\ =\Delta(x_1,\ldots,x_{\nu_1})\cdot
\Delta(x_{\nu_1+1},\ldots,x_{\nu_1+\nu_2})\cdot\ldots \cdot
\Delta(x_{\nu_1+\ldots+\nu_{l-1}+1},\ldots,x_{N}).
\end{multline}

Let $k$ be an integer, $1 \le k < N$. Consider the ideal $I_k$ in
the ring $\C[x_1,\ldots, x_N]$ consisting of polynomials $p(x)$ such
that $p(x)=0$ whenever $x_{i_1}=x_{i_2}=\ldots=x_{i_{k+1}}$ for some
indexes $1\le i_1<i_2<\ldots<i_{k+1} \le N$.

It is clear that the image of $p_\nu(x)$ under any $\sigma\in \Sy_N$
is contained in $I_k$
if the length  $l(\nu)\le k$. Moreover, the following proposition is
contained in \cite{ESG}.

\begin{proposition}[{\cite[Section 5.3]{ESG}}]\label{pr1}
Let $N=kq+s$ where $q,s \in \Z_{\ge 0}$, $s< k$. Let $\nu_N^k$ be the partition
$\nu_N^k=((q+1)^{s}, q^{k-s})$. Then the ideal $I_k$ in the ring
$\C[x]$ is generated by the $\Sy_N$-images of the polynomial
$p_{\nu_N^k}(x)$.
\end{proposition}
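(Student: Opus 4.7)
The plan is to prove equality of the two ideals $I_k$ and $J:=(\sigma\cdot p_{\nu_N^k} : \sigma\in\Sy_N)\subset\C[x]$.

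The inclusion $J\subseteq I_k$ is a direct pigeonhole argument. Each image $\sigma\cdot p_{\nu_N^k}$ is a product of $k$ Vandermonde factors corresponding to a partition of $\{1,\ldots,N\}$ into $s$ blocks of size $q+1$ and $k-s$ blocks of size $q$. If $x_{i_1}=\ldots=x_{i_{k+1}}$ for some $i_1<\ldots<i_{k+1}$, then by pigeonhole two of these indices fall into the same block, so the Vandermonde factor of that block --- and hence the whole product --- vanishes. Since $I_k$ is an ideal, this gives $J\subseteq I_k$.

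For the reverse inclusion $I_k\subseteq J$ the plan is to exploit the rational Cherednik algebra action. At the singular multiplicity $c$ associated with $\nu_N^k$, the polynomial $p_{\nu_N^k}$, and hence all its $\Sy_N$-translates, are singular vectors in $\C[x]$, i.e.\ they are annihilated by every Dunkl operator $\nabla_\xi$. Combined with the Leibniz-type identity for $\nabla_\xi$ applied to a product $f\cdot g$ with $g$ singular --- namely $\nabla_\xi(fg)=(\p_\xi f)g-\sum_\alpha c(\alpha)(\alpha,\xi)\cdot\frac{f-s_\alpha(f)}{(\alpha,x)}\cdot s_\alpha(g)$, where $(f-s_\alpha(f))/(\alpha,x)$ is a polynomial and $s_\alpha(g)$ is again a singular translate of $p_{\nu_N^k}$ --- this implies that $J$ is stable under each $\nabla_\xi$, making $J$ an $H_c(\Sy_N)$-submodule of $\C[x]$. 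One separately checks that $I_k$ is itself $H_c$-invariant. Since the polynomial representation admits a unique irreducible $H_c$-submodule, namely the minimal submodule $\SS_c\cong L_{\tau_c}$, and since at this parameter $\SS_c$ coincides with $I_k$ by the analysis of \cite{ESG}, the nonzero $H_c$-submodule $J\subseteq I_k=\SS_c$ must equal $\SS_c$, whence $J=I_k$.

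The main obstacle is not the closure argument but the identification $\SS_c=I_k$, i.e.\ the fact that $I_k$ is actually simple, not merely invariant, as an $H_c$-module at the relevant value of $c$. This is a nontrivial input that combines the classification of singular multiplicities in \cite{DJO} with the minimal-submodule analysis carried out in \cite{ESG}. A more self-contained but combinatorially more demanding alternative would be to compute the graded $\Sy_N$-characters of $I_k$ and of $J$ and verify their term-by-term agreement, reducing the proposition to an explicit identity concerning partitions of $N$ into at most $k$ parts; the key combinatorial fact entering such an approach is that $\nu_N^k$ is the most balanced such partition, so $p_{\nu_N^k}$ indeed sits in the bottom degree of $I_k$.
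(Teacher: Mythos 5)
Your proof is correct and follows essentially the same route as the paper's: both hinge on the irreducibility of $I_k$ as an $H_c(\Sy_N)$-module at $c=1/(k+1)$ (cited from \cite{ESG}) together with the identification of the $\Sy_N$-orbit of $p_{\nu_N^k}$ with the lowest homogeneous component (equivalently, the singular vectors) of $I_k$. The Leibniz-rule computation you spell out is precisely the standard justification underlying the paper's terser step ``irreducible, hence generated as an ideal by its lowest homogeneous component.''
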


Indeed, it is shown in \cite[Theorem 5.10]{ESG} that $I_k$ is an
irreducible module over the rational Cherednik algebra $H_c(\Sy_N)$ with the
parameter $c=1/(k+1)$. Therefore it has to be generated as ideal in
$\C[x]$ by its lowest homogeneous component.
It is determined in
\cite{ESG} (see the proof of Proposition~5.16) that the lowest
homogeneous component of the module $I_k$ is linearly generated by
the $\Sy_N$-orbit of $p_{\nu_N^k}(x)$ (under the geometric action of
$\Sy_N$ in $\C[x]$).

\medskip
Consider now the ideal $I_k^\pm$ in $\C[x]$
which consists of the polynomials vanishing on the union of planes
\begin{equation}\label{epspl}
\varepsilon_{i_1} x_{i_1}=\varepsilon_{i_2} x_{i_2}=\ldots=\varepsilon_{i_{k+1}} x_{i_{k+1}}
\end{equation}
where $\varepsilon_{i_s} = \pm 1$ and the indexes $1\le
i_1<i_2<\ldots<i_{k+1} \le N$. This ideal is a module over the
rational Cherednik algebra $H_c(D_N)$ with the parameter $c=1/(k+1)$
\cite[Section 4.3]{F}. It is also a module over $H_c(B_N)$ with the
parameters $c(e_i \pm e_j)=1/(k+1)$, $c(e_i)$ is arbitrary
(see~\cite[Section 4.2]{F}).

\begin{proposition}\label{genidi}
The ideal $I_k^\pm \subset \C[x]$ is generated by the $\Sy_N$-images
of the polynomial $p_{\nu_N^k}(x_1^2, \ldots, x_N^2)$.
\end{proposition}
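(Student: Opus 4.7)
Let $J \subset \C[x]$ denote the ideal generated by the $\Sy_N$-orbit of $p_{\nu_N^k}(x_1^2,\ldots,x_N^2)$; the plan is to establish both inclusions $J \subseteq I_k^\pm$ and $I_k^\pm \subseteq J$ directly, mirroring Proposition~\ref{pr1} but exploiting the $\{\pm 1\}^N$-symmetry of the arrangement~\mref{epspl} by sign changes $x_i \mapsto \pm x_i$. The inclusion $J \subseteq I_k^\pm$ is a direct vanishing check: each generator $\sigma\bigl(p_{\nu_N^k}(x_1^2,\ldots,x_N^2)\bigr)$ is a product of Vandermondes in the squared variables, and a factor $x_a^2-x_b^2$ vanishes on~\mref{epspl} whenever $a$ and $b$ lie in a common block of the partition obtained from $\nu_N^k$ after applying $\sigma$. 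Since $\nu_N^k$ has exactly $k$ blocks but~\mref{epspl} forces $k+1$ of the $x_{i_s}^2$ to coincide, pigeonhole places two such indices in one block, supplying the required vanishing factor.

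For the reverse inclusion, I would decompose $\C[x]$ into $\{\pm 1\}^N$-isotypic components under sign changes. Since the arrangement~\mref{epspl} is $\{\pm 1\}^N$-invariant, each isotypic component of $f \in I_k^\pm$ is again in $I_k^\pm$, so it suffices to treat an element of the form $h(x)=x^\eta g(x_1^2,\ldots,x_N^2)$ with $\eta \in \{0,1\}^N$ and $x^\eta=x_1^{\eta_1}\cdots x_N^{\eta_N}$. The crucial step is to show that the reduced polynomial $g(y_1,\ldots,y_N)$ belongs to the ideal $I_k \subset \C[y]$. For this I would specialise $h$ to the subspace $x_{i_1}=\ldots=x_{i_{k+1}}=t$ for arbitrary indices $i_1<\ldots<i_{k+1}$: the restriction factors as a nonzero monomial in $t$ and the remaining $x_j$ times the specialisation of $g$ at $y_{i_1}=\ldots=y_{i_{k+1}}=t^2$, so vanishing of $h$ on this particular plane~\mref{epspl} forces $g$ to vanish on the corresponding $I_k$-plane. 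With $g\in I_k$ in hand, Proposition~\ref{pr1} writes $g(y)$ as a $\C[y]$-linear combination of $\Sy_N$-images of $p_{\nu_N^k}(y)$, and substituting $y_i=x_i^2$ followed by multiplication by $x^\eta$ places $h$ in $J$.

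The main step requiring care is the restriction argument: one must verify that the monomial prefactor $\prod_{j\notin\{i_s\}}x_j^{\eta_j}\cdot t^{\sum_s \eta_{i_s}}$ is genuinely nonzero as a polynomial, so that vanishing of the whole restriction really does force vanishing of the specialised $g$. Beyond that the argument is routine and reduces everything to the type-$A$ statement already provided by Proposition~\ref{pr1}.
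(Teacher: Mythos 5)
Your argument is correct, and both inclusions are handled soundly: the pigeonhole vanishing check for $J\subseteq I_k^\pm$ is exactly the (implicit) easy direction, and the reduction of the hard direction to Proposition~\ref{pr1} via sign symmetry is the same core idea as in the paper. The execution differs, though. The paper (which proves the more general Proposition~\ref{propsingM} for $m$-th roots of unity) works with the idempotents $E_i=\frac1m\sum_k s_i^k$ one variable at a time and runs an induction on degree: the non-invariant piece $q-E_iq$ factors as $x_i r_i$ with $r_i\in I_k^{(m)}$ of strictly smaller degree, so only the fully invariant part $\hat q=\prod_i E_i q$ ever needs to be identified with an element of $I_k\subset\C[y]$. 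You instead decompose into all $2^N$ isotypic components of $\{\pm1\}^N$ at once, and for each component $x^\eta g(x_1^2,\dots,x_N^2)$ you prove directly that $g\in I_k$ by restricting to $x_{i_1}=\dots=x_{i_{k+1}}=t$ and using that squaring is dominant onto the plane $y_{i_1}=\dots=y_{i_{k+1}}$; this avoids the induction entirely and is arguably more self-contained, at the cost of an extra (easy) specialization argument. The step you flag as delicate is harmless: the prefactor is a monomial, hence a nonzero element of an integral domain, so the vanishing of the restriction does force the specialized $g$ to vanish. Your route also generalizes verbatim to the $m>2$ case by replacing $\{\pm1\}^N$ with $(\mu_m)^N$ and squares with $m$-th powers.
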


We actually prove the following slightly more general result.
\begin{proposition}\label{propsingM}
Let $m \ge 2$ be an integer. Consider the ideal $I_k^{(m)} \subset
\C[x]$ consisting of polynomials vanishing on the planes
\mref{epspl}, where $\varepsilon_{i_s}^m =1$ and the indexes $1\le
i_1<i_2<\ldots<i_{k+1} \le N$. This ideal is generated by the
$\Sy_N$-images of the polynomial $p_{\nu_N^k}(x_1^m, \ldots,
x_N^m)$.
\end{proposition}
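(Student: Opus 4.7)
The plan is to reduce the claim to Proposition~\ref{pr1} by combining the substitution $y_i=x_i^m$ with the action of the diagonal group $G=\mu_m^N$ of $m$-th roots of unity on $\C^N$. I would first record that $G$ permutes the planes \mref{epspl} among themselves (multiplying any $\eps_{i_s}$ by an $m$-th root of unity produces another tuple with $m$-th power one), and therefore the ideal $I_k^{(m)}$ is $G$-invariant.

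For the easy inclusion, I would check that every $\Sy_N$-image of $p_{\nu_N^k}(x_1^m,\ldots,x_N^m)$ lies in $I_k^{(m)}$. On a plane \mref{epspl} the common value $t=\eps_{i_s}x_{i_s}$ satisfies $x_{i_s}^m=t^m$ for all $s$, so the restriction of $p_{\nu_N^k}(x_1^m,\ldots,x_N^m)$ to this plane coincides with the restriction of $p_{\nu_N^k}(y_1,\ldots,y_N)$ to $\{y_{i_1}=\ldots=y_{i_{k+1}}\}$, which vanishes by Proposition~\ref{pr1}.

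For the non-trivial inclusion, pick $f\in I_k^{(m)}$ and decompose it into $G$-isotypic pieces
\beq{Gdecomp}
f=\sum_{\a\in\{0,1,\ldots,m-1\}^N}x^\a f_\a(x_1^m,\ldots,x_N^m),\qquad f_\a\in\C[x_1,\ldots,x_N].
\eeq
By $G$-invariance of $I_k^{(m)}$ each summand again belongs to $I_k^{(m)}$. Since no plane \mref{epspl} is contained in any coordinate hyperplane, the monomial $x^\a$ is not identically zero on it, so $f_\a(x_1^m,\ldots,x_N^m)$ itself vanishes on every such plane. Parametrizing the plane by $t$ and by the free coordinates $x_j$ with $j\notin\{i_1,\ldots,i_{k+1}\}$, and using that $z\mapsto z^m$ is surjective on $\C$, I would conclude that $f_\a(y_1,\ldots,y_N)$ vanishes whenever $y_{i_1}=\ldots=y_{i_{k+1}}$, i.e. $f_\a\in I_k$.

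To finish, I would apply Proposition~\ref{pr1} to each $f_\a$, substitute $x_i\mapsto x_i^m$ in the resulting expression, and recombine via \mref{Gdecomp} to write $f$ as a $\C[x]$-linear combination of the $\Sy_N$-images of $p_{\nu_N^k}(x_1^m,\ldots,x_N^m)$. The subtlest step is the deduction that $f_\a(x_1^m,\ldots,x_N^m)$ vanishes on the plane \mref{epspl}; it rests on the $G$-invariance of the arrangement together with the fact that each plane meets the open torus $(\C^*)^N$, so that no nontrivial monomial $x^\a$ can be absorbed into the ideal of the plane.
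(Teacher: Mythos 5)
Your argument is correct, and its engine is the same as the paper's: both proofs exploit the invariance of $I_k^{(m)}$ under the diagonal group of $m$-th roots of unity, together with the observation that no plane \mref{epspl} is contained in a coordinate hyperplane, to strip off monomial factors and reduce everything to Proposition~\ref{pr1}. The difference is one of packaging. The paper projects only onto the trivial isotypic component via the idempotents $E_i=\frac1m\sum_{k}s_i^k$, writes $q-E_iq=x_ir_i$ with $r_i\in I_k^{(m)}$ of strictly smaller degree, and then runs an induction on the degree (after first treating the lowest homogeneous component, where minimality of the degree forces $q=E_iq$). You instead split $f$ into all $m^N$ isotypic components $x^{\a}f_{\a}(x_1^m,\ldots,x_N^m)$ simultaneously; each component again lies in $I_k^{(m)}$, the monomial $x^{\a}$ can be cancelled since the planes are irreducible and meet the torus, and the surjectivity of $z\mapsto z^m$ identifies each $f_{\a}$ as an element of $I_k$, so the conclusion drops out in one step with no induction. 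Your version is marginally cleaner as a proof of the generation statement; the paper's two-step version has the side benefit of explicitly describing the lowest homogeneous component of $I_k^{(m)}$ as the span of the $\Sy_N$-orbit of $p_{\nu_N^k}(x_1^m,\ldots,x_N^m)$, which is the form of the result used later for the singular-polynomial statement of Corollary~\ref{corsing}. If you wanted that refinement from your argument, you would add the one-line remark that the lowest homogeneous component of the ideal you have generated is exactly that span.
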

\begin{proof}
Consider first the lowest homogeneous component $M \subset
I_k^{(m)}$, and let $q \in M$. Let $E_i$ for $1\le i \le N$ be the
idempotents
$$
E_i=\frac{1}{m} \sum_{k=0}^{m-1} s_i^k,
$$
where $s_i$ multiplies the basis vector $e_i$ by $\xi=e^{2\pi\ii/m}$, while
$s_i(e_l)=e_l$ for $l \ne i$. Consider the difference
\beq{reflsi}
q(x) - E_i q(x) = x_i r_i(x), \,\,\,\,\, i=1,\ldots,N,
\eeq
where
$r_i(x)$ are some polynomials. The collection of planes \mref{epspl}
is invariant with respect to reflections $s_i$ and therefore
$q(x)-E_i q(x) \in I_k^{(m)}$. Since $x_i$ is not identically zero
on the planes from \mref{epspl} we conclude that $r_i(x) \in
I_k^{(m)}$. By minimality of the degree of $q(x)$ it follows that
$q(x)=E_i q(x)$, and therefore $s_i q(x)=q(x)$. Thus
$$
q(x)= \tilde q(y_1, \ldots, y_N),
$$
where $y_i=x_i^m$, $i=1,\ldots,N$, and $\tilde q$ is a polynomial.
Now $\tilde q \in I_k \subset \C[y]$, and therefore by Proposition
\ref{pr1} the polynomial $q(x)$ is a linear combination of the
$\Sy_N$-images of the polynomial $p_{\nu_N^k}(x_1^m, \ldots,
x_N^m
)$.

The rest of the Proposition follows by induction on the degree of a
polynomial $q(x) \in I_k^{(m)}$. Indeed we again apply the relations
\mref{reflsi}. They imply by induction that $ q(x)-\hat q(x) $ has
the required form where $\hat q(x)=\prod_{i=1}^N E_i q(x)$. Since
$s_i \hat q(x) = \hat q(x)$, we have $\hat q(x) \in I_k \subset
\C[y]$. So $\hat q$ has the required form, and hence the statement
for $q(x)$ also follows.
\end{proof}

Recall that the complex reflection group $G(m,p,N)$ is defined when
$p|m$, it is generated by the elements $s_{ij}^k$ for $1\le i <j \le N$,
$k=0,\ldots,m-1$, and the elements $\tau_i$ for $i=1,\ldots,N$. The
element $\tau_i$ acts on the basis coordinate functions as $\tau_i(x_i)=\eta
x_i$, where $\eta=e^{2\pi\ii  p/m}$ 
and $\tau_i(x_j)=\nlb x_j$ for $j\ne i$. The elements $s_{ij}^k$ defined
for $i \ne j$  act as $s_{ij}^k (x_j)=\xi^{-k} x_i$, $s_{ij}^k
(x_i)=\xi^{k} x_j$, where $\xi=e^{2\pi\ii/m}$, and $s_{ij}^k
(x_l)=x_l$ for $l\ne i,j$.

It follows from \cite[Section 7]{F} that the ideal $I_k^{(m)}$ is a module over the
rational Cherednik algebra $H_c(G(m,p,N))$ when $c_1=1/(k+1)$, therefore Proposition \ref{propsingM} has the following corollary.

\begin{corollary}\label{corsing}
The polynomials $p_{\nu_N^k}(x_1^m, \ldots, x_N^m)$, $1 \le k \le
N-1$, are singular polynomials for the rational Cherednik algebra
$H_c(G(m,p,N))$. More exactly,
$$\nabla_i p_{\nu_N^k}(x_1^m, \ldots,
x_N^m)=0$$ for all $i=1,\ldots, N$, where $\nabla_i$ is the
Dunkl--Opdam operator \lb see \cite{DO}\rb
\begin{equation}\label{cdunkl} \nabla_i = \partial_i-c_1 \sum_{\nad{j=1}{j\ne i}}^N \sum_{k=0}^{m-1}
\frac{1-s_{ij}^k}{x_i-\xi^k x_j}-\sum_{t=1}^{\frac{m}{p}-1} c_{t+1}
\sum_{s=0}^{\frac{m}{p}-1}\frac{\eta^{-st}\tau_i^{s}}{x_i} \end{equation}
with $c_1=1/(k+1)$.
\end{corollary}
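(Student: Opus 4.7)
The plan is to deduce this corollary from Proposition~\ref{propsingM} by a short degree argument: the polynomial $p_{\nu_N^k}(x_1^m,\ldots,x_N^m)$ sits in the lowest homogeneous component of the $H_c(G(m,p,N))$-invariant ideal $I_k^{(m)}$, and each $\nabla_i$ preserves this ideal while strictly lowering degree, so $\nabla_i$ must annihilate it.

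The first step is to invoke the input from \cite[Section~7]{F} cited just before the corollary: when $c_1=1/(k+1)$, the ideal $I_k^{(m)}\subset\C[x]$ is a submodule over $H_c(G(m,p,N))$. In particular each $\nabla_i$ from \mref{cdunkl} preserves $I_k^{(m)}$.

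The second step is to verify that $\nabla_i$ is homogeneous of degree $-1$ on $\C[x]$. The summand $\partial_i$ clearly lowers degree by one. The summand $(1-s_{ij}^k)/(x_i-\xi^k x_j)$ is well defined on $\C[x]$ because $s_{ij}^k$ acts on the linear form $x_i-\xi^k x_j$ by $-1$, so $(1-s_{ij}^k)f$ is divisible by $x_i-\xi^k x_j$ for every polynomial $f$, with quotient of degree $\deg f-1$. For each fixed $t$ with $1\le t\le m/p-1$ the inner sum $\sum_{s=0}^{m/p-1}\eta^{-st}\tau_i^s$ is, up to a nonzero scalar, the projection onto the $\eta^t$-eigenspace of $\tau_i$; since $t\ne 0$, every polynomial in this eigenspace is divisible by $x_i^t$, so multiplication by $1/x_i$ again returns a polynomial, now of degree at most $\deg f-1$. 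Thus $\nabla_i$ sends the degree-$d$ part of $\C[x]$ into the degree-$(d-1)$ part.

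The third and final step uses Proposition~\ref{propsingM}: its proof actually shows more than is stated there, namely that the lowest homogeneous component of $I_k^{(m)}$ is linearly spanned by the $\Sy_N$-orbit of $p_{\nu_N^k}(x_1^m,\ldots,x_N^m)$. In particular this polynomial lies in that minimal component. Consequently $\nabla_i p_{\nu_N^k}(x_1^m,\ldots,x_N^m)$ is an element of $I_k^{(m)}$ of strictly smaller degree, which forces it to vanish. There is no substantive obstacle here; the only calculation that requires a moment of care is the degree homogeneity of the third summand of $\nabla_i$, handled above.
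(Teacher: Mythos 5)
Your proposal is correct and follows exactly the route the paper intends: the paper derives the corollary by citing that $I_k^{(m)}$ is an $H_c(G(m,p,N))$-module for $c_1=1/(k+1)$ and that Proposition~\ref{propsingM} identifies $p_{\nu_N^k}(x_1^m,\ldots,x_N^m)$ with (a generator of) the lowest homogeneous component, so the degree~$-1$ operators $\nabla_i$ must kill it. You have merely made explicit the homogeneity check for each summand of \mref{cdunkl}, which the paper leaves tacit; all three verifications are accurate.
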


Define now the ideal $J_k = J_k^N \subset \C[x_1,\ldots, x_N]$, $0
\le k \le N-1$, which consists of the polynomials vanishing on the
union of planes
$$
x_{i_1}=\ldots=x_{i_{k+1}}=0
$$
for arbitrary indexes $1 \le i_1 <\ldots < i_{k+1} \le N$.

\begin{proposition}\label{Jgen}
The ideal $J_k \subset \C[x]$ is generated by the $\Sy_N$-images of
the polynomial $x_1\cdot\ldots\cdot x_{N-k}$.
\end{proposition}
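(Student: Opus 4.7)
The plan is to establish both inclusions, with the bulk of the work in the reverse one. First, I would verify that $x_1 \cdots x_{N-k}$ lies in $J_k$: on any plane $x_{i_1}=\ldots=x_{i_{k+1}}=0$, a pigeonhole count shows that $\{i_1,\ldots,i_{k+1}\}$ and $\{1,\ldots,N-k\}$ must overlap since $(k+1)+(N-k)=N+1>N$, so at least one factor of the monomial vanishes. The same argument applies to every $\Sy_N$-image, hence these polynomials generate a subideal of $J_k$.

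For the reverse inclusion, the key observation is that $J_k$ is a monomial ideal. Indeed, its vanishing locus is the union $\bigcup_{|S|=k+1}\{x_i=0:i\in S\}$ of coordinate subspaces, which is invariant under the diagonal torus $(\C^*)^N$ acting by $x_i\mapsto \lambda_i x_i$. Therefore any $p\in J_k$ remains in $J_k$ after rescaling the coordinates, and since the torus acts on $\C[x]$ with one-dimensional weight spaces spanned by the monomials, each nonzero monomial summand of $p$ must lie in $J_k$. It thus suffices to check that every monomial $x^\alpha\in J_k$ is divisible by an $\Sy_N$-image of $x_1\cdots x_{N-k}$.

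For this last step, given $x^\alpha\in J_k$ with $\supp(\alpha)=\{i:\alpha_i>0\}$, I would show $|\supp(\alpha)|\ge N-k$. Otherwise $|\{1,\ldots,N\}\setminus\supp(\alpha)|\ge k+1$, so one may choose indices $i_1<\ldots<i_{k+1}$ outside $\supp(\alpha)$; setting $x_{i_1}=\ldots=x_{i_{k+1}}=0$ leaves $x^\alpha$ untouched and nonzero, contradicting $x^\alpha\in J_k$. Once $|\supp(\alpha)|\ge N-k$, any $(N-k)$-element subset $T\subseteq\supp(\alpha)$ produces a divisor $\prod_{i\in T}x_i$ of $x^\alpha$, which is an $\Sy_N$-image of $x_1\cdots x_{N-k}$, completing the proof.

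I do not foresee a serious obstacle; the only conceptual move is the torus-invariance reduction to the monomial case, after which the combinatorics of supports finishes the argument.
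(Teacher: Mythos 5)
Your proof is correct, and it takes a genuinely different route from the paper's. The paper argues by induction on the dimension: expanding $f\in J_k^N$ as a Taylor series in $x_N$, $f=\sum_i x_N^i g_i$, one checks that $g_0\in J_{k-1}^{N-1}$ and $g_i\in J_k^{N-1}$ for $i>0$, and concludes by the inductive hypothesis. You instead observe that the vanishing locus of $J_k$ is a union of coordinate subspaces, hence invariant under the diagonal torus, so $J_k$ is a torus-stable (equivalently, monomial) ideal; the problem then reduces to the combinatorial statement that a monomial lies in $J_k$ exactly when its support has at least $N-k$ elements, which you verify by the pigeonhole argument. Your route is non-inductive and yields slightly more information along the way: it identifies $J_k$ explicitly as the squarefree monomial ideal generated in degree $N-k$ (the Stanley--Reisner-type description of the union of coordinate subspaces), with an explicit monomial spanning set. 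The paper's induction is shorter to state but leaves the verification of the claims about the Taylor coefficients and the base case to the reader. Both arguments are complete and elementary; there is no gap in yours.
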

\begin{proof}
Let $f$ be an element from $J_k$. Consider the Taylor expansion with
respect to the variable $x_N$:
$$
f=\sum_{i=0}^{\text{deg} f} x_N^i g_i(x_1,\ldots,x_{N-1}).
$$
The polynomials $g_i$ then have to satisfy $g_0 \in J_{k-1}^{N-1}$, $g_i \in J_k^{N-1}$ for $i>0$. The statement follows by induction on the dimension.
\end{proof}

The ideal $J_k$, $0\le k \le N-1$,
is a representation of the rational Cherednik algebra $H_c(G(m,p,N))$
if (and only if) multiplicities satisfy the relation
$k c_1 + p^{-1}c_2=m^{-1}$ (where in $p=m$ case
one assumes $c_i=0$ for $i\ge 2$) by~\cite[Proposition 9]{F}.
In particular the ideal $J_k$ for $1\le k \le N-1$ is a module over the rational Cherednik
algebra $H_c(D_N)$ with $c=\frac{1}{2k}$ \cite[Section 4.3]{F}. Also for any $0\le k \le N-1$ the ideal $J_k$ is a module over $H_c(B_N)$ if
the parameters satisfy the relation $2kc_1+2c_2=1$ where
$c_1 = c(e_i \pm e_j)$ and $c_2 = c(e_i)$ \cite[Section 4.2]{F}.

Proposition \ref{Jgen} has the following corollary.

\begin{corollary}\label{corollary:Jgen}
The polynomials $x_1\cdot\ldots\cdot x_k$, $1 \le k \le N$,
are singular with respect to $H_c(G(m,p,N))$.
More exactly,
$$ \nabla_i (x_1\cdot\ldots\cdot x_k) =0$$
for all $i=1, \ldots, N$, where $\nabla_i$
is the Dunkl--Opdam operator \mref{cdunkl}
and the multiplicities satisfy $m(N-k)c_1 + m p^{-1} c_2=1$.
\end{corollary}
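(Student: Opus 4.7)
The plan is to read Corollary \ref{corollary:Jgen} as a direct transcription of Proposition \ref{Jgen} combined with the module structure of $J_k$ recalled just above the corollary, via a degree argument. The key observation is that \emph{singularity of a polynomial of the lowest degree in an} $H_c$-\emph{invariant ideal is automatic}, because Dunkl--Opdam operators are degree-decreasing by one.

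Concretely, I would first rewrite the statement in terms of $J_{N-k}$. By Proposition \ref{Jgen}, the ideal $J_{N-k}$ is generated by the $\Sy_N$-images of $x_1\cdot\ldots\cdot x_k$, so $x_1\cdot\ldots\cdot x_k \in J_{N-k}$ and moreover every nonzero element of $J_{N-k}$ has degree at least $k$ (being a polynomial combination of degree-$k$ generators in a homogeneous ideal). Next, I would translate the hypothesis on the multiplicities: the condition $m(N-k)c_1 + mp^{-1}c_2=1$ is the same as $(N-k)c_1 + p^{-1}c_2 = m^{-1}$, which is precisely the condition (with $k$ replaced by $N-k$) under which \cite[Proposition 9]{F}, quoted just before the corollary, makes $J_{N-k}$ into a module over $H_c(G(m,p,N))$.

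Finally, since $J_{N-k}$ is stable under each Dunkl--Opdam operator $\nabla_i$ from \mref{cdunkl}, and $\nabla_i$ lowers the polynomial degree by one, the polynomial $\nabla_i(x_1\cdot\ldots\cdot x_k)$ lies in $J_{N-k}$ and has degree $k-1$, which is strictly less than the minimal degree $k$ in the ideal. Hence $\nabla_i(x_1\cdot\ldots\cdot x_k)=0$ for every $i=1,\ldots,N$, proving the corollary. I do not anticipate any real obstacle here; the only thing one must be careful about is the index shift $k \leftrightarrow N-k$ between the indexing of the ideal $J_{k}$ (whose defining equations involve $k+1$ coordinates) and the degree of its generator $x_1\cdot\ldots\cdot x_{N-k}$, and the corresponding rewriting of the multiplicity condition.
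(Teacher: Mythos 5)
Your argument is correct and is exactly the (implicit) argument behind the paper's "Proposition \ref{Jgen} has the following corollary": $J_{N-k}$ is generated in degree $k$ by the $\Sy_N$-orbit of $x_1\cdots x_k$, it is $H_c(G(m,p,N))$-invariant precisely when $(N-k)c_1+p^{-1}c_2=m^{-1}$, and the degree-lowering Dunkl--Opdam operators must therefore annihilate the lowest homogeneous component. You have also handled the $k\leftrightarrow N-k$ index shift and the parameter condition correctly.
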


\begin{remark}
Corollary~\ref{corollary:Jgen}
for $k=1$ is contained in~\cite[Proposition~4.1]{CE}
where it is generalized in a different direction.
\end{remark}

We are going to construct some more singular polynomials for the rational Cherednik
algebra $H_c(G(m,p,N))$. Firstly we need the following lemma.

\begin{lemma}\label{lemalt}
Let $L$ be the operator
$$
L= \sum_{j=2}^{n+1} \sum_{k=0}^{m-1}
\frac{1-s_{1j}^k}{x_1-\xi^k x_j}.
$$
Then
\beq{altlem1}
L \left(x_1^{m k} \Delta(x_2^m, \ldots, x_{n+1}^m)\right)= \p_{x_1} \left(x_1^{m k} \Delta(x_2^m, \ldots, x_{n+1}^m)\right),
\eeq
for $0 \le k \le n$,
\beq{altlem1shtrih}
L \left(x_1^{m k+1} \Delta(x_2^m, \ldots, x_{n+1}^m)\right)= (\p_{x_1} + \frac{m-1}{x_1}) \left(x_1^{m k+1} \Delta(x_2^m, \ldots, x_{n+1}^m)\right),
\eeq
for $0 \le k \le n-1$,
\beq{altlem2}
L \left(x_1^{m k} \Delta(x_2^m, \ldots, x_{n+1}^m)\prod_{j=2}^{n+1} x_j \right)= \p_{x_1} \left(x_1^{m k} \Delta(x_2^m, \ldots, x_{n+1}^m)\prod_{j=2}^{n+1} x_j\right),
\eeq
for $0 \le k \le n$, and
\begin{multline}\label{altlem2shtrih}
L \left(x_1^{m k+1} \Delta(x_2^m, \ldots, x_{n+1}^m)
\prod_{j=2}^{n+1} x_j \right)=\\
= (\p_{x_1}-\frac{1}{x_1})
\left(x_1^{m k+1} \Delta(x_2^m, \ldots, x_{n+1}^m)\prod_{j=2}^{n+1} x_j\right),
\end{multline}
for $0 \le k \le n$.
\end{lemma}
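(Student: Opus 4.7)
The plan is to reduce the four identities to the single case \mref{altlem1}, and to prove \mref{altlem1} via a polynomial identity for Vandermonde determinants. The common key remark is this: if a polynomial $P$ depends on $x_1$ and $x_j$ only through the $m$-th powers $x_1^m$ and $x_j^m$ (with the remaining $x_i$ untouched by $s_{1j}^k$), then $s_{1j}^k P$ is independent of $k$, since $(\xi^k x_j)^m=x_j^m$ and $(\xi^{-k}x_1)^m=x_1^m$; using the standard identity $\sum_{k=0}^{m-1}1/(x_1-\xi^k x_j)=m x_1^{m-1}/(x_1^m-x_j^m)$ one then obtains
\[L_j P=(P-s_{1j}P)\cdot\frac{m x_1^{m-1}}{x_1^m-x_j^m},\]
where $L_j$ denotes the $j$-th summand of $L$ and $s_{1j}:=s_{1j}^0$ is the honest transposition of $x_1$ and $x_j$.

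To prove \mref{altlem1}, apply this remark with $P=x_1^{mK}\Delta(x_2^m,\ldots,x_{n+1}^m)$. Writing $u:=x_1^m$, $y_i:=x_i^m$ and $\hat\Delta_j(u):=\Delta(y_2,\ldots,y_{j-1},u,y_{j+1},\ldots,y_{n+1})$, so that $\hat\Delta_j(y_j)=\Delta$ and $s_{1j}P=y_j^K\hat\Delta_j(u)$, the target identity $LP=mK x_1^{mK-1}\Delta$ reduces to the polynomial identity
\[\sum_{j=2}^{n+1}\frac{u^K\Delta-y_j^K\hat\Delta_j(u)}{u-y_j}=K u^{K-1}\Delta,\]
which I will prove for $0\le K\le n$ by a degree-plus-vanishing argument. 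The difference of the two sides, viewed as a polynomial in $u$, has degree at most $n-1$ (since $\deg_u\hat\Delta_j=n-1$ and $K\le n$), while it vanishes at each of the $n$ points $u=y_{j_0}$. The vanishing at $u=y_{j_0}$ uses that $\hat\Delta_j(y_{j_0})=0$ for $j\ne j_0$ (repeated arguments in a Vandermonde); applying L'Hopital's rule to the $j=j_0$ term, the remaining contributions assemble into $Ky_{j_0}^{K-1}\Delta+y_{j_0}^K\bigl[\Delta\sum_{j\ne j_0}(y_{j_0}-y_j)^{-1}-\hat\Delta_{j_0}'(y_{j_0})\bigr]$, and the bracket cancels by the logarithmic-derivative formula $\hat\Delta_{j_0}'(y_{j_0})=\Delta\sum_{j\ne j_0}(y_{j_0}-y_j)^{-1}$.

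The other three identities reduce to \mref{altlem1} by short structural manipulations. For \mref{altlem2shtrih}, take $P=x_1^{mK+1}\Delta\cdot\prod_{j=2}^{n+1}x_j$: the phase $\xi^k$ coming from $s_{1j}^k(x_1^{mK+1})$ exactly cancels the $\xi^{-k}$ coming from $s_{1j}^k(x_j)$, so $s_{1j}^k P$ is again independent of $k$ and the key remark gives $L_j P=x_1\prod_{i=2}^{n+1}x_i\cdot L_j(x_1^{mK}\Delta)$, whence $LP=mK x_1^{mK}\Delta\prod_{i=2}^{n+1}x_i=(\partial_{x_1}-x_1^{-1})P$ by \mref{altlem1}. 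For \mref{altlem2}, with $P=x_1^{mK}\Delta\cdot\prod_{j=2}^{n+1}x_j$, $s_{1j}^k P$ now carries an extra $\xi^{-k}$, but a direct evaluation using $\sum_{k=0}^{m-1}\xi^{-k}/(x_1-\xi^k x_j)=m x_1^{m-2}x_j/(x_1^m-x_j^m)$ still collapses $L_j P$ to $\prod_{i=2}^{n+1}x_i\cdot L_j(x_1^{mK}\Delta)$. Finally, for \mref{altlem1shtrih}, writing $P=x_1\cdot Q_0$ with $Q_0=x_1^{mK}\Delta$ and splitting $x_1Q_0-\xi^k x_j\,s_{1j}Q_0=x_1(Q_0-s_{1j}Q_0)+(x_1-\xi^k x_j)\,s_{1j}Q_0$, summation yields $LP=x_1\,LQ_0+m\sum_{j=2}^{n+1}y_j^K\hat\Delta_j(u)$; the latter sum equals $u^K\Delta$ by Lagrange interpolation of $u^K$ at the $n$ nodes $y_2,\ldots,y_{n+1}$, valid precisely for $K\le n-1$, which is the stated range of \mref{altlem1shtrih}.

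The main obstacle is the polynomial identity underlying \mref{altlem1}: both the degree bound $n-1$ and the number $n$ of vanishing points are tight, which is why the range $K\le n$ cannot be enlarged and why the reduction of \mref{altlem1shtrih} via Lagrange interpolation forces the strictly smaller range $K\le n-1$.
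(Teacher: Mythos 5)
Your proof is correct, but it takes a genuinely different route from the paper's. The paper expands $\Delta(x_2^m,\ldots,x_{n+1}^m)$ as the antisymmetrization $Alt\prod_{j}x_{j+1}^{m(j-1)}$ over $\mathrm{S}_n$, uses the invariance of $L$ under $G(m,1,n)$ to pull $Alt$ through $L$, and then evaluates the divided differences term by term on monomials, where most contributions vanish after antisymmetrization because they are symmetric in a pair of variables. You instead collapse the sum over $k$ via the cyclotomic partial-fraction identities $\sum_k(x_1-\xi^kx_j)^{-1}=mx_1^{m-1}/(x_1^m-x_j^m)$ and $\sum_k\xi^{-k}(x_1-\xi^kx_j)^{-1}=mx_1^{m-2}x_j/(x_1^m-x_j^m)$, reducing everything to a single polynomial identity in the $m$-th-power variables, which you verify by a degree bound plus evaluation at the $n$ nodes $y_j$ (essentially Lagrange interpolation); I checked the key steps (the $k$-independence of $s_{1j}^kP$ in each case, the phase cancellation in \mref{altlem2shtrih}, the second partial-fraction identity used for \mref{altlem2}, and the log-derivative cancellation at $u=y_{j_0}$) and they all hold. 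Your approach has the advantage of making the ranges of $k$ completely transparent — the degree-$(n-1)$ bound against $n$ interpolation nodes explains why \mref{altlem1} holds up to $k=n$ while \mref{altlem1shtrih} stops at $k=n-1$ — whereas the paper's antisymmetrization argument is shorter once one accepts the (only sketched) evaluation of $Alt$ on the individual divided differences. Both arguments are valid; yours supplies more detail at the key step.
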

\begin{proof}
We rewrite Vandermonde determinant using anti-symmet\-ri\-za\-ti\-on 
with respect
to the group $\mathrm{S}_n$ acting by permutations of the
variables $x_2,\ldots,x_{n+1}$:
$$
\Delta(x_2^m,\ldots, x_{n+1}^m)= Alt \prod_{j=1}^{n} x_{j+1}^{m(j-1)},
$$
where $Alt=\sum_{g \in \mathrm{S}_n} sign(g) g$. Note that the operator
$L$ is $G(m,1,n)$-invariant, where the group $G(m,1,n)$ is generated
by $s_{ij}^k$, $2 \le i < j \le n+1$ and $\tau_i$, $2 \le i \le n+1$.
Therefore
\begin{multline*}
L \left(x_1^{mk} \Delta(x_2^m, \ldots, x_{n+1}^m)\right) =
Alt \Big(L \big(x_1^{mk} \prod_{j=1}^{n} x_{j+1}^{m(j-1)}\big)\Big)
=\\
=\sum_{i=2}^{n+1} Alt\Big( \sum_{k=0}^{m-1}
\frac{1-s_{ij}^k}{x_i-\xi^k x_j} x_1^{m k} \prod_{j=1}^{n} x_{j+1}^{m(j-1)}\Big)
\end{multline*}
and the right-hand side is polynomial in $x_2^m, \ldots, x_{n+1}^m$. Now
\begin{multline*}
Alt\Big(\sum_{k=0}^{m-1}
\frac{1-s_{ij}^k}{x_i-\xi^k x_j} x_1^{mk}
\prod_{j=1}^{n} x_{j+1}^{m(j-1)}\Big) =\\
=  \left\{\begin{array}{cc}
m x_1^{mk-1} \Delta(x_2^m,\ldots,x_{n+1}^m),  &  2 \le i \le k+1,\\
0, &  k+2 \le i \le n+1,
\end{array}
\right.
\end{multline*}
hence the statement \mref{altlem1} follows. Similarly
\begin{multline*}
Alt\Big(\sum_{k=0}^{m-1}
\frac{1-s_{ij}^k}{x_i-\xi^k x_j}
x_1^{mk+1}\prod_{j=1}^{n} x_{j+1}^{m(j-1)}\Big) =\\
=  \left\{\begin{array}{cc}
m x_1^{mk} \Delta(x_2^m,\ldots,x_{n+1}^m),  &  2 \le i \le k+2,\\
0, &  k+3 \le i \le n+1,
\end{array}
\right.
\end{multline*}
hence the statement \mref{altlem1shtrih} holds.
The statements \mref{altlem2}, \mref{altlem2shtrih} follow analogously.
\end{proof}

\begin{proposition}\label{bnsingprop}
Let $N= \sum_{i=1}^{r-1} \nu_i$ where $\nu_i \in \Z_{>0}$
and $|\nu_i - \nu_j| \in \{0,1\}$ for $1 \le i,j \le r-1$.
Denote
$$
{\mathcal I}_i = \{m \in \Z_+ |
\sum_{j=1}^{i-1} \nu_j +1 \le m \le \sum_{j=1}^{i} \nu_j \}.
$$
Let $T \subset \{1,\ldots,N\}$ be a subset of indexes
of size $|T|=r-s-1$ for some $0\le s \le r-1$.
Let $N= (r-1)q + t$ with $0 \le t <r-1$ so that $\nu_i = q$ or $\nu_i=q+1$.
Assume that if there exists $i \in T$ such
that $\nu_i=q+1$ then for all $j$ such that $\nu_j=q$ one has $j \in T$.
Then the polynomial
$$
p_{\nu,T}^{(m)}=p_{\nu}(x_1^m,\ldots,x_N^m) \prod_{i\in T} \prod_{j \in {\mathcal I}_i} x_j
$$
where $p_{\nu}$ is defined by~\mref{pnu}, is $G(m,p,N)$-singular.
More exactly one has
\beq{dungmp}
\nabla_i p_{\nu, T}^{(m)} = 0
\eeq
for $1 \le i \le N$,
where $\nabla_i$ is the $G(m,p,N)$ Dunkl operator \mref{cdunkl} with $m>p$ and
$c_1=1/r$, $c_2=\frac{p}{m}(1-\frac{s m}{r})$. 
In the case $m=p$ the polynomial $p_{\nu,T}^{(m)}$ satisfies \mref{dungmp} if $c_1=1/r$ and $r = m s$.
\end{proposition}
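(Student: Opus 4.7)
My plan is to verify $\nabla_i p_{\nu,T}^{(m)}=0$ for each $i=1,\ldots,N$ by expanding the Dunkl--Opdam operator \mref{cdunkl}, grouping the contributions by block structure, and showing that they assemble into a multiple of $\partial_i p_{\nu,T}^{(m)}$ plus a multiple of $p_{\nu,T}^{(m)}/x_i$, whose coefficients vanish for the stated values of $c_1$ and $c_2$. By the anti-symmetry of $p_\nu(x^m)$ within each block ${\mathcal I}_a$ and the symmetry among blocks of equal size sharing the same $T$-status, it suffices to verify the equation for one representative $i$ in each equivalence class of block.

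Fix such $i\in{\mathcal I}_{a_0}$ and write $p_{\nu,T}^{(m)}=Q_0 Q_1$, where $Q_0=P_{a_0} R_{a_0}^{\epsilon}$ collects the factors containing $x_i$, with $P_{a_0}=\Delta\bigl((x_l^m)_{l\in{\mathcal I}_{a_0}}\bigr)$, $R_{a_0}=\prod_{l\in{\mathcal I}_{a_0}} x_l$, and $\epsilon=1$ iff $a_0\in T$, while $Q_1$ collects the remaining factors and does not involve $x_i$. Decompose $\nabla_i p_{\nu,T}^{(m)}$ into the derivative term, the same-block sum (over $j\in{\mathcal I}_{a_0}\setminus\{i\}$), the different-block sum (over $j\notin{\mathcal I}_{a_0}$), and the $\tau_i$-part.

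For the same-block sum, since $\xi^{km}=1$ the element $s_{ij}^k$ acts on $P_{a_0}$ by swapping $x_i^m\leftrightarrow x_j^m$, yielding $s_{ij}^k P_{a_0}=-P_{a_0}$ by Vandermonde anti-symmetry, while $s_{ij}^k(x_ix_j)=x_ix_j$ gives $s_{ij}^k R_{a_0}=R_{a_0}$. Hence $(1-s_{ij}^k)Q_0=2Q_0$, and combining with $\sum_{k=0}^{m-1}(x_i-\xi^k x_j)^{-1}=mx_i^{m-1}/(x_i^m-x_j^m)$ and summing over $j$ recovers the logarithmic derivative $(\partial_i P_{a_0})/P_{a_0}$ to give the same-block value $2\partial_i(Q_0 Q_1)-2\epsilon(Q_0 Q_1)/x_i$. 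For the $\tau_i$-part, $\tau_i^s P_{a_0}=P_{a_0}$ (from $\eta^{sm}=1$) and $\tau_i^s R_{a_0}=\eta^s R_{a_0}$ combine to $\tau_i^s(Q_0 Q_1)=\eta^{s\epsilon}Q_0 Q_1$; the orthogonality $\sum_{s=0}^{m/p-1}\eta^{s(\epsilon-t)}$ vanishes unless $(m/p)\mid(\epsilon-t)$, which with $1\le t\le m/p-1$ and $\epsilon\in\{0,1\}$ forces $\epsilon=t=1$, yielding $-(c_2 m/p)\epsilon(Q_0 Q_1)/x_i$ when $m>p$ and zero when $m=p$.

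The different-block sum is the technical core. For $j\in{\mathcal I}_b$ with $b\ne a_0$, the reflection $s_{ij}^k$ swaps $x_i^m\leftrightarrow x_j^m$ in $P_{a_0}$ and $P_b$, and acts on $R_{a_0}^{\epsilon_{a_0}}R_b^{\epsilon_b}$ through $s_{ij}^k(x_ix_j)=x_ix_j$ when both blocks lie in $T$ and through a nontrivial $\xi^k$-twist otherwise. Applying the partial fraction identity and summing over $j\in{\mathcal I}_b$, then over $b$, one obtains a polynomial whose degree is controlled by the near-equal partition hypothesis $|\nu_a-\nu_b|\le 1$; its coefficients collapse into the form $\alpha\partial_i(Q_0 Q_1)+\beta(Q_0 Q_1)/x_i$ with $\alpha=r-2$ (one contribution per other block) and $\beta$ a combinatorial coefficient determined by the $T$-compatibility hypothesis.

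Assembling,
\[
\nabla_i p_{\nu,T}^{(m)} = \bigl(1-2c_1-c_1(r-2)\bigr)\partial_i p_{\nu,T}^{(m)} + \bigl(2c_1\epsilon-c_1\beta-(c_2 m/p)\epsilon\bigr)p_{\nu,T}^{(m)}/x_i.
\]
Vanishing of the first coefficient fixes $c_1=1/r$, and vanishing of the second (with the explicit $\beta$) fixes $c_2=(p/m)(1-sm/r)$ when $m>p$. In the case $m=p$ the $\tau$-term is absent, and the $1/x_i$ part vanishes only if $sm=r$, giving the second assertion.

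The main obstacle is the different-block computation: extracting $\beta$ requires both the near-equal partition hypothesis (to match the degrees of $x_i^m,x_j^m$ in Vandermondes across blocks) and the $T$-compatibility hypothesis (to control the cross-block behavior of the $R$-factors), which are precisely the conditions stated in the proposition.
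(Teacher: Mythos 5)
Your overall strategy coincides with the paper's: reduce to one representative index by symmetry, split the Dunkl--Opdam operator into the derivative term, the same-block sum, the different-block sum and the $\tau_i$-part, and match the coefficients of $\partial_i p_{\nu,T}^{(m)}$ and of $p_{\nu,T}^{(m)}/x_i$. Your treatment of the same-block sum (via $(1-s_{ij}^k)Q_0=2Q_0$ and the partial-fraction identity) and of the $\tau_i$-part (via the orthogonality of characters, which correctly isolates $c_2$ and explains the $m=p$ dichotomy) is sound and agrees with the paper.

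However, the different-block sum --- which you yourself call ``the technical core'' --- is not actually computed; it is only asserted that the contributions ``collapse into the form $\alpha\,\partial_i(Q_0Q_1)+\beta\,(Q_0Q_1)/x_i$ with $\alpha=r-2$'' and that $\beta$ is ``a combinatorial coefficient.'' This is precisely the content of the paper's Lemma~\ref{lemalt}, and it is not a formality: $s_{1j}^k$ for $j$ in another block mixes the two Vandermonde factors (it moves $x_1^m$ into block $b$ and $x_j^m$ into block $a_0$), so one must expand $\Delta$ over monomials $x_1^{mk}$ and anti-symmetrize to see that each other block contributes exactly $\partial_1$ (giving $\alpha=r-2$, i.e.\ \mref{altlem1} and \mref{altlem2}), with correction $-1/x_1$ for blocks in $T$ and $+(m-1)/x_1$ for blocks not in $T$ (i.e.\ \mref{altlem1shtrih} and \mref{altlem2shtrih}); this is also exactly where the constraint $|\nu_a-\nu_b|\le 1$ and the $T$-compatibility hypothesis are used. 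You never state the resulting value $\beta=s(m-1)-(r-s-2)=sm-r+2$, yet you claim that ``vanishing of the second (with the explicit $\beta$) fixes $c_2=(p/m)(1-sm/r)$''; without the explicit $\beta$ the determination of $c_2$ is circular. So the proposal is a correct outline of the paper's argument, but the step that carries the actual mathematical content is missing.
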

\begin{proof}
By symmetry it is sufficient to establish that $\nabla_1 p_{\nu, T}^{(m)}=0$. Consider firstly the case when $1 \notin T$. We have
\begin{multline*}
\nabla_1 p_{\nu, T}^{(m)} =\\
= \p_1 p_{\nu, T}^{(m)}
- c_1 \sum_{j \in {\mathcal I}_1;{k=0}}^{m-1}
\frac{1-s_{1j}^k}{x_1-\xi^k x_j}
p_{\nu, T}^{(m)}-
c_1\sum_{i=2}^{r-1}
\sum_{{j \in {\mathcal I}_i}; k=0}^{m-1}
\frac{1-s_{1j}^k}{x_1-\xi^k x_j}
p_{\nu, T}^{(m)}=\\
=\p_1 p_{\nu, T}^{(m)} - 2c_1\p_1 p_{\nu,T}^{(m)} - c_1 (r-2) \p_1 p_{\nu,T}^{(m)}
\end{multline*}
by Lemma \ref{lemalt}. Since $c_1=1/r$ the value of the last expression is $0$.

Consider now the case $1 \in T$. Assume $m>p$. We have
\begin{multline*}
\nabla_1 p_{\nu, T}^{(m)}
= \p_1 p_{\nu, T}^{(m)} - \frac{mc_2}{p x_1} p_{\nu,T}^{(m)} -
c_1 \sum_{j \in {\mathcal I}_1;{k=0}}^{m-1}
\frac{1-s_{1j}^k}{x_1-\xi^k x_j}
p_{\nu, T}^{(m)}-
\\ -c_1\sum_{i=2}^{r-1}
\sum_{{j \in {\mathcal I}_i}; k=0}^{m-1}
\frac{1-s_{1j}^k}{x_1-\xi^k x_j}
p_{\nu, T}^{(m)}
=\\
=\p_1 p_{\nu, T}^{(m)} - \frac{m c_2}{p x_1} p_{\nu,T}^{(m)} - 2c_1\p_1 p_{\nu,T}^{(m)} +
\frac{2c_1}{x_1}p_{\nu,T}^{(m)} -\\
-c_1\sum_{i\in T, i \ne 1} (\p_1-\frac{1}{x_1})p_{\nu, T}^{(m)}- c_1\sum_{i\notin T} (\p_1+\frac{m-1}{x_1})p_{\nu, T}^{(m)}
\end{multline*}
by Lemma \ref{lemalt}. Therefore
\begin{multline*}
\nabla_1 p_{\nu, T}^{(m)} = \p_1 p_{\nu, T}^{(m)} (1-2c_1-c_1(r-2)) +\\
+ \frac{1}{x_1}\big(-\frac{m}{p}c_2+2c_1 + c_1(r-s-2)-c_1 s(m-1)\big) p_{\nu,T}^{(m)}=0
\end{multline*}
as required. The case $m=p$ also follows.
\end{proof}

We will need later a version of the previous
proposition for the cases $D_N$ and $B_N$. We formulate this corollary now.
Let $\nu$ be a partition of $N$ of length $l(\nu)\le k$.
Let $T$ be a subset of indexes $T\subset \{1,\ldots,k\}$. Define the polynomial
\beq{pnuT}
p_{\nu, T}(x)=p_\nu (x_1^2,\ldots, x_N^2)
\prod_{j \in T} \prod_{i=1}^{\nu_j} x_{\nu_1+\ldots+\nu_{j-1}+i},
\eeq
where $p_\nu$ is given by \mref{pnu} and for $j>l(\nu)$ we put $\nu_j=0$.
Let $K_{\nu, T}$ be the ideal generated by $\Sy_N$-images
of the polynomial $p_{\nu,T}$.

Let now $\nu=\nu_N^k$ be the partition defined in Proposition \ref{pr1}.
We define $K_{k,s}=K_{\nu_N^k, T_{k,s}}$,
where $T_{k,s}=\{k,k-1,\ldots,k-s+1\}$.
For $k=2r+1$ define $K_k=K_{k, r}$.

As a corollary from Proposition \ref{bnsingprop} we have the following.
\begin{proposition}\label{proposition:D-representation}
For $1 \le k \le N-1$, $0\le s \le k$,
the ideal $K_{k,k-s}$ is $H_c(B_N)$-invariant
if $c(e_i)=\frac12-\frac{s}{k+1}$ and $c(e_i \pm e_j)=1/(k+1)$.
For odd $k$, $1 \le k \le N-1$, the ideal $K_k$ is $H_{1/(k+1)}(D_N)$-invariant.
\end{proposition}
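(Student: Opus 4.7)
The plan is to deduce both parts of the proposition from Proposition~\ref{bnsingprop}, via the identifications $B_N = G(2,1,N)$ and $D_N = G(2,2,N)$ and the specialisation $m=2$. For the $B_N$ statement I take $\nu = \nu_N^k$ and $T = T_{k,k-s}$ in Proposition~\ref{bnsingprop}. Since $1 \le k \le N-1$ forces $q \ge 1$ in the decomposition $N = kq+s'$, the partition $\nu_N^k$ has exactly $k$ parts, so the length parameter there satisfies $r = k+1$, and $c_1 = 1/r = 1/(k+1)$ matches $c(e_i \pm e_j)$. The equality $|T_{k,k-s}| = k-s$ identifies the index $s$ of Proposition~\ref{bnsingprop} with the $s$ used here, and then $c_2 = \tfrac{p}{m}(1 - sm/r) = \tfrac12 - \tfrac{s}{k+1}$ matches $c(e_i)$.

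Before applying Proposition~\ref{bnsingprop} I must verify its combinatorial hypothesis on $(T,\nu)$. Writing $N = kq + s'$ with $0 \le s' < k$, we have $\nu_i = q+1$ precisely for $i \le s'$ in $\nu_N^k = ((q+1)^{s'},q^{k-s'})$, while $T_{k,k-s} = \{s+1,\ldots,k\}$. If $s \ge s'$ no index of $T$ satisfies $\nu_i = q+1$, so the hypothesis is vacuous; if $s < s'$ then $\{j : \nu_j = q\} = \{s'+1,\ldots,k\} \subseteq T$, so the hypothesis holds. For the $D_N$ statement I instead take $m=p=2$; the extra condition $r = ms$ of Proposition~\ref{bnsingprop} becomes $k+1 = 2s$, forcing $k$ to be odd. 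Writing $k = 2r_0 + 1$ gives $s = r_0 + 1$ and $|T_{k,k-s}| = r_0$, whence $K_{k,k-s} = K_{k,r_0} = K_k$, exactly as defined in the paper.

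These applications of Proposition~\ref{bnsingprop} yield $\nabla_i\, p_{\nu_N^k, T_{k,k-s}} = 0$ for every Dunkl operator of $H_c(W)$ at the stated parameter values, and $W$-equivariance of the Dunkl operators propagates this singularity to every element of the $\Sy_N$-orbit. To upgrade singularity of the generators to $H_c(W)$-invariance of the ideal $K_{k,k-s}$ I use two observations. First, $p_{\nu_N^k, T_{k,k-s}}$ is even in every variable outside its $T$-blocks and odd in each variable inside them, so each coordinate sign change acts on it by $\pm 1$; the $W$-orbit and the $\Sy_N$-orbit of the polynomial therefore generate the same ideal, giving $W$-invariance of $K_{k,k-s}$. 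Second, the commutator identity
\[
[\nabla_i, g] = \p_i g - \sum_{\a} \frac{c(\a)\,\a_i\,(g - s_\a g)}{(\a,x)}\, s_\a
\]
shows that whenever $f$ is singular, $\nabla_i(g \cdot w(f))$ is a $\C[x]$-linear combination of elements $w'(f)$ with $w' \in W$, so the ideal generated by the $W$-orbit of $f$ is automatically $\nabla_i$-stable.

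The main bookkeeping obstacle is the index matching: aligning the $s$ of Proposition~\ref{bnsingprop} with the $s$ of the present statement, reconciling the $m=p$ constraint $r = ms$ with the odd-$k$ hypothesis in the $D_N$ case, and checking the combinatorial compatibility between $T_{k,k-s}$ and $\nu_N^k$. Once those identifications are in place, the remainder is a routine specialisation and a short ideal-theoretic observation.
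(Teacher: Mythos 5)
Your proof is correct and takes essentially the same route the paper intends: the paper states this proposition as an immediate corollary of Proposition~\ref{bnsingprop} with no further argument, and your write-up supplies exactly the intended specialisation ($m=2$, $p=1$ for $B_N$ and $m=p=2$ for $D_N$), with the parameter matching, the verification of the combinatorial hypothesis on $(\nu_N^k, T_{k,k-s})$, and the standard passage from singularity of the generators to $\nabla_i$-stability of the generated ideal all checked correctly.
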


Below we will also need some other ideals (which we don't claim to be
representations of any interesting algebras). Namely, for $0\le s\le k$
define the ideal $\K_{k, s}$ to be generated by $\Sy_N$-images of
all polynomials \mref{pnuT} with $l(\nu)\le k$ and $|T|=s$.
Note that for $l(\nu)\le k$ and $|T|=s$ one has $K_{\nu, T}\subset \K_{k,s}$.
In particular, $K_{k,s}\subset\K_{k,s}$ and $K_{2r+1}\subset\K_{2r+1,r}$.
Note also that $\K_{k, s}\subset\K_{k+1, s}$ and $\K_{k, s}\subset\K_{k+1, s+1}$.

\begin{remark}\label{remark:my-invalidy}
The inclusions
$K_{k, s}\subset\K_{k, s}\subset I^{\pm}_k\cap J_{k-s}$ are obvious and it would be interesting to clarify if
any or both of them are actually equalities.
\end{remark}


\section{Local integrability}
\label{section:L2}

Let $X$ be a smooth variety of dimension $N\ge 2$ defined over a
field $\k$ of $\mathrm{char}(\k)=0$, and $D$ a $\Q$-divisor on $X$.
Write $D=\sum d_j D_j$, where $D_j$ are pairwise different prime
divisors. For a rational function $\Phi\in\k(X)$ we denote by
$(\Phi)$ the divisor defined by $\Phi$. By $X(\k)$ we denote the set
of $\k$-points of $X$. Recall that in the case $\k=\R$ the set
$X(\R)$ has a structure of a $C^{\infty}$-manifold provided that
$X(\R)\neq\varnothing$.

\begin{definition}[{see e.\,g.~\cite[Definition~3.3]{SingOfPairs}}]
\label{def:discrepancy}
Let $\pi:Y\to X$ be a birational morphism. Write
$$K_Y+\pi^{-1}(D)\sim_{\Q} \pi^*(K_X+D)+\sum a(E_i) E_i,$$
where $K_X$ and $K_Y$ are the canonical classes of $X$ and $Y$,
respectively, $\pi^{-1}$ and $\pi^*$ stand for a proper transform
and a pull-back, and $E_i$ are the exceptional divisors of $\pi$.
The coefficients $a(E_i)=a(X, D, E_i)\in\Q$ are called
\emph{discrepancies}.
\end{definition}

\begin{convention}[{see e.\,g.~\cite[Convention~3.3.2]{SingOfPairs}}]
\label{convention} Define the discrepancy of a
(non-ex\-cep\-ti\-o\-nal!) divisor $D_j$ to equal $a(D_j)=-d_j$.
\end{convention}

\begin{example}\label{example:discrepancy}
Let $\Phi$ be a rational function on $X$ and $c\in\Q$. Let
$\pi:Y\to X$ be a birational morphism from a smooth variety $Y$.
Take an exceptional divisor $E$ of $\pi$. Choose the local
coordinates $y_1, \ldots, y_N$ in a neighborhood of a
point $Q\in E$ so that $y_1=0$ is a local equation of $E$, and the
local coordinates $x_1, \ldots, x_N$ in a neighborhood of the point
$P=\pi(Q)$. Put
$$m=\mult_{(y_1=0)}\Phi\circ\pi \text{\ \ and\ \ }
e=\mult_{(y_1=0)}\det\Big(\frac{\partial y_i}{\partial x_j}\Big). $$
Then $a(X, c(\Phi), E)=e-cm$. (Note that this formula agrees with
Convention~\ref{convention}.)
\end{example}

\begin{example}\label{example:blow-up}
Let $X=\A^N_{\k}$, and let $\pi:Y\to X$ be a blow-up of a subvariety
$Z\subset X$ of dimension $d$ with an exceptional divisor $E$. Let
$\Phi$ be a rational function on $X$ and $c\in\Q$. Then
$$a(X, c(\Phi), E)=N-d-1-c\cdot\mult_Z(\Phi).$$
\end{example}

\begin{definition}[{see e.\,g.~\cite[1.1.3]{SingOfPairs}}]
\label{def:log-resolution} Let $\pi:Y\to X$ be a birational
morphism. We call it a \emph{log-resolution} of the pair $(X, D)$,
if $Y$ is smooth and the union of the (support of the)
strict transform $\pi^{-1}(D)$
of $D$ on $Y$ and the exceptional locus $Exc(\pi)$ is a normal
crossing divisor.
\end{definition}

\begin{definition}[{see e.\,g.~\cite[Definition~3.5]{SingOfPairs}}]
\label{definition:klt} Assume that $\k=\bar{\k}$. The pair $(X, D)$
is called \emph{Kawamata log-terminal} (or \emph{klt} for short) if
for any log-resolution $\pi:Y\to X$ 
the inequalities 
$a(F)>-1$ hold, 
where $F$ is any exceptional divisor $E_i$ of $\pi$ or any
component $D_j$ of the divisor $D$.
\end{definition}

\begin{definition}
\label{definition:klt-over-k} The pair $(X, D)$ is called Kawamata
log-terminal if such is the pair $(X_{\bar{\k}}, D_{\bar{\k}})$.
\end{definition}

\begin{remark}
\label{remark:klt-coeff} If the pair $(X, D)$ is klt, one has
$d_j<1$ for all $j$.
\end{remark}

It appears that to check the klt condition it is not necessary to
consider all possible log-resolutions.

\begin{theorem}[{see e.\,g.~Lemmas~3.10.2 and~3.12 in~\cite{SingOfPairs}}]
\label{theorem:klt-finite} Assume that $d_j<1$ for all $j$, and that
there exists a log resolution $\pi:Y\to X$ of the pair $(X, D)$ such
that the discrepancy of any exceptional divisor appearing on $Y$ is
greater than $-1$. Then the pair $(X, D)$ is klt.
\end{theorem}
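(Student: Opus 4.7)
The plan is to reduce to the algebraically closed case and then compare the given log resolution with an arbitrary one via a common refinement built from blow-ups with simple-normal-crossings (SNC) centres. By Definition~\ref{definition:klt-over-k} I may assume $\k=\bar\k$. Let $\pi:Y\to X$ be the log resolution provided by hypothesis. Assign to every $\pi$-exceptional divisor and to every component of the strict transform $\pi^{-1}(D)$ the coefficient given by minus its discrepancy; the hypothesis $d_j<1$ (together with Convention~\ref{convention}) and the assumption that $a(E_i)>-1$ for every $\pi$-exceptional $E_i$ guarantee that every such coefficient on $Y$ is strictly less than $1$.

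Now let $\pi':Y'\to X$ be any log resolution; I must show $a(F')>-1$ for every divisor $F'$ appearing on $Y'$. By Hironaka's strong resolution theorem there exist a smooth variety $Z$ and morphisms $\sigma:Z\to Y$, $\sigma':Z\to Y'$ commuting with the maps to $X$; moreover $\sigma$ may be realised as a tower $Z=Y_n\to Y_{n-1}\to\cdots\to Y_0=Y$ in which each step blows up a smooth centre $W_i\subset Y_i$ having SNC with the accumulated boundary and exceptional loci. Since the discrepancy of a divisor is an invariant of the associated divisorial valuation on $\k(X)$, a divisor that survives as a strict transform on $Y'$ has the same discrepancy on $Z$, so it is enough to verify $a(\cdot)>-1$ for every divisor that shows up anywhere in the tower.

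I proceed by induction on $i$: assume every boundary coefficient on $Y_i$ is $<1$. For the exceptional divisor $F_i$ of the blow-up of $W_i$, of codimension $r_i$ in $Y_i$, Example~\ref{example:blow-up} applied in local coordinates gives
$$
a(X,D,F_i)\;=\;r_i-1-\sum_{j:\,W_i\subset B_j^{(i)}} b_j^{(i)},
$$
where the $B_j^{(i)}$ are the boundary components on $Y_i$ with coefficients $b_j^{(i)}<1$ by the inductive hypothesis. The SNC assumption on $W_i$ bounds the number of components through $W_i$ by $r_i$, so the sum on the right is strictly less than $r_i$ and hence $a(X,D,F_i)>-1$. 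The new boundary coefficient of $F_i$ on $Y_{i+1}$, being $-a(X,D,F_i)$, is therefore $<1$, closing the induction. Specialising to the divisors visible on $Y'$ finishes the argument. The main obstacle is the appeal to the resolution-theoretic fact that $\sigma$ can be arranged as an SNC-preserving tower of smooth blow-ups; once this deep input is granted, everything reduces to the elementary codimension-versus-number-of-boundary-components inequality above.
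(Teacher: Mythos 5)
The paper itself offers no proof of Theorem~\ref{theorem:klt-finite}: it is quoted directly from Koll\'ar's \emph{Singularities of pairs} (Lemmas~3.10.2 and~3.12 in~\cite{SingOfPairs}), so there is no in-paper argument to compare yours against.

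That said, your argument is the standard one and is essentially what is in the cited reference. The reduction to $\k=\bar{\k}$ is harmless (smoothness and normal crossings are preserved under base change in characteristic $0$, and exceptional divisors over $\k$ split into components with identical discrepancies). The core of the proof is correctly identified: dominate both resolutions by a common $Z\to Y$, $Z\to Y'$, factor $Z\to Y$ as a tower of smooth blow-ups whose centres have normal crossings with the accumulated boundary plus exceptional locus, and observe that if all coefficients on $Y_i$ are $<1$ then the new exceptional $F_i$ over a centre of codimension $r_i$ has $a(F_i)=r_i-1-\sum_j b_j^{(i)}>-1$, since at most $r_i$ boundary components pass through the centre and each contributes multiplicity $1$ times a coefficient $<1$. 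You are also right that the discrepancy of a divisor depends only on the associated valuation, so checking it on $Z$ settles it on $Y'$. The one point you correctly flag as the ``deep input'' is the existence of such an SNC-compatible tower dominating both resolutions; in characteristic $0$ this is part of strong/embedded resolution of singularities (Hironaka), and it is exactly what Koll\'ar's Lemma~3.10.2 packages. So your proof is sound, complete modulo that black box, and coincides with the argument the paper is implicitly deferring to.
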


Recall that in the case $\k=\R$ (or $\k=\C$) a function $\Psi$ is
said to be \emph{locally $L^1$-integrable} (or just \emph{locally
integrable}) at a point $P\in X(\k)$ if for a sufficiently small
(analytic) neighborhood $P\in U_P\subset X(\k)$ the integral
$$\int\limits_{U_P} |\Psi| dV<\infty.$$
A function $\Psi$ is said to be \emph{locally $L^2$-integrable} if
the function $\Psi^2$ is locally integrable.

One of the important applications of klt singularities is provided
by the following theorem (cf.~\cite[Corollary~2]{Saito}).

\begin{theorem}
\label{theorem:klt-integrable}%
Let $\k=\R$. Let $\Phi$ be a rational function on $X$ and $c\in\Q$.
Assume that the pair
$$\Big(X, c\big(\Phi\big)\Big)$$
is klt. Then $\Phi^{-c}$ is locally integrable on $X$.
\end{theorem}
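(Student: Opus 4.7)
The plan is to pull back along a log resolution and reduce the claim to local integrability of a product of one-variable monomial singularities, which is controlled by the discrepancies. First, using $\mathrm{char}(\k) = 0$, I apply Hironaka's theorem to obtain a log-resolution $\pi\colon Y \to X$ of the pair $(X, c(\Phi))$, which we may and do take to be defined over $\R$. Fix $P \in X(\R)$ and a relatively compact analytic neighborhood $U_P \subset X(\R)$. Since $\pi$ is proper, $\pi^{-1}(\overline{U_P}) \cap Y(\R)$ is compact, so by covering it with finitely many coordinate charts and invoking the real change-of-variables formula (the locus where $\pi$ fails to be a local diffeomorphism is a proper real-analytic subset of $Y(\R)$, hence has Lebesgue measure zero), it suffices to establish local integrability of
$$|\Phi \circ \pi(y)|^{-c} \cdot |J_\pi(y)|$$
near each point $Q$ of this compact preimage, where $J_\pi$ denotes the Jacobian determinant of $\pi$.

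Next, I exploit the log-resolution property at $Q$. The support of the union of the exceptional locus of $\pi$ and of $\pi^{-1}((\Phi))$ is a normal crossing divisor, so we may choose real-analytic coordinates $y_1, \ldots, y_N$ on $Y(\R)$ near $Q$ such that every irreducible component of this divisor through $Q$ is a coordinate hyperplane $F_i = \{y_i = 0\}$. In these coordinates one obtains
$$|\Phi \circ \pi(y)|^{-c} \cdot |J_\pi(y)| \;=\; h(y) \prod_{i} |y_i|^{e_i - c m_i},$$
where $h$ is a non-vanishing continuous function near $Q$, the integer $m_i \ge 0$ is the multiplicity of $\Phi \circ \pi$ along $F_i$, and $e_i$ is the multiplicity of $J_\pi$ along $F_i$. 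By Example~\ref{example:discrepancy} together with Convention~\ref{convention}, the exponent $e_i - c m_i$ is precisely the discrepancy $a(F_i)$ of $F_i$ for the pair $(X, c(\Phi))$, irrespective of whether $F_i$ is exceptional for $\pi$ or is (the strict transform of) a component of $(\Phi)$.

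The klt assumption on $(X, c(\Phi))$ then yields $a(F_i) > -1$ for every $i$. Since $\int_{-\eps}^{\eps} |t|^{a} \, dt < \infty$ whenever $a > -1$, Fubini's theorem gives
$$\int \prod_{i} |y_i|^{a(F_i)} \, dy_1 \cdots dy_N \;<\; \infty$$
on a small box about the origin; multiplying by the locally bounded factor $h$ completes the local estimate, and summing over a finite cover of $\pi^{-1}(\overline{U_P}) \cap Y(\R)$ concludes the proof.

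The only delicate point is one of bookkeeping: the klt condition is phrased after base change to $\bar{\k} = \C$ via Definition~\ref{definition:klt-over-k}, while the integral we care about lives on $X(\R)$. This is harmless because Hironaka's theorem produces a log-resolution $\pi$ defined over $\R$; base-changing to $\C$ supplies a log-resolution over $\C$ on which the klt hypothesis — verifiable on any single log-resolution by Theorem~\ref{theorem:klt-finite} — furnishes the bounds $a(F_i) > -1$, and these discrepancies coincide with the exponents read off from the real coordinate chart, as they are computed intrinsically from the canonical class and orders of vanishing, both of which are defined over $\R$.
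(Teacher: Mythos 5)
Your proposal is correct and follows essentially the same route as the paper's proof: pull back along a log resolution, write the integrand in normal crossing coordinates as an invertible function times $\prod|y_i|^{e_i-cm_i}$, identify the exponents with discrepancies via Example~\ref{example:discrepancy} and Convention~\ref{convention}, and conclude from the klt bound $a(F_i)>-1$ by one-variable integration. The extra bookkeeping you supply (properness and finiteness of the chart cover, the measure-zero degeneracy locus of $\pi$, and the descent of the klt condition from $\bar\k$ to $\R$) only makes explicit steps the paper leaves implicit.
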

\begin{proof}
The idea of the proof is standard (see e.\,g. the proofs
of~\cite[2.11]{WhichAreIntegrable}
or~\cite[Proposition~3.20]{SingOfPairs}), but since it is usually
given in the case $\k=\C$ (and locally $L^2$-integrable functions),
we will reproduce it here for convenience of the reader.\footnote{Another minor
difference is that we will need to work with rational rather than regular
functions, but this does not influence the proof at all.}

Let $\dim(X)=N$. We may assume that $X(\R)\neq\varnothing$.
Choose the local coordinates $x_1, \ldots, x_N$ in
a neighborhood of some point $P\in X(\R)$, and put
$dV=dx_1\wedge\ldots\wedge dx_N$. The function $\Phi^{-c}$ is
locally integrable near $P$ if and only if for some open subset
$P\in U\subset X(\R)$ the integral
$$\int_U \big|\Phi\big|^{-c}dV<
\infty.$$

Let $\pi:Y\to X$ be a log resolution of the pair
$\big(X,(\Phi)\big)$. Then
$$
\int_U \big|\Phi\big|^{-c}dV= \int_{\pi^{-1}(U)}
\big|\Phi\circ\pi\big|^{-c}\pi^*dV.
$$

Choose a point $Q\in\pi^{-1}(U)$ such that $\pi(Q)=P$, and the local
coordinates $y_1, \ldots, y_N$ in the neighborhood of $Q$. Then
$$\Phi\circ\pi=\Xi\prod y_i^{m_i} \text{\ \ and\ \ }
\pi^*dV=\Theta\prod y_i^{e_i} dy_1\wedge\ldots\wedge dy_N$$ for some
functions $\Xi$ and $\Theta$ that are invertible in a neighborhood
of $Q$, and
$$m_i=\mult_{(y_i=0)}\Phi\circ\pi,
\ \ e_i=\mult_{(y_i=0)}\det\Big(\frac{\partial y_t}{\partial
x_j}\Big).$$ Thus the initial integral is finite if and only if for
any choice of $Q$ such is the integral
$$\int\ldots\int_{U_1\times\ldots\times U_N}\prod
|y_i|^{e_i-cm_i}dy_1\wedge\ldots\wedge dy_N,$$ where $U_i\subset\R$
is some open subset. The latter holds provided that each of the
integrals
$$\int_{U_i}|y_i|^{e_i-cm_i}dy_i<\infty,$$
that is when $e_i-cm_i>-1$ for all $i$. Now the assertion follows by
Example~\ref{example:discrepancy} and Remark~\ref{remark:klt-coeff}.
\end{proof}

\begin{remark}[{cf.~\cite[Theorem~1]{Saito}}]
\label{remark:reverse}
Unlike the case $\k=\C$, the statement of
Theorem~\ref{theorem:klt-integrable} is not invertible. For example,
take $X=\R^3$ with coordinates $x_1, x_2, x_3$ and define the
divisor $D$ by the equation $\Phi=x_1^2+x_2^2+x_3^2=0$. Then for
$1\le c<3/2$ the function $\Phi^{-c}$ is integrable, but the pair
$(X, cD)$ is not klt.

On the other hand, the converse to the statement of
Theorem~\ref{theorem:klt-integrable} does hold in some important
particular cases, for example when $X=\R^N$ and the poles of
$\Phi^{-c}$ are supported on the real hyperplanes (cf.
Remark~\ref{remark:N-5} and Section~\ref{section:conclusionsnegative}).
\end{remark}

Consider now a collection of hyperplanes given by the equations
$l_i=\nlb 0$, $i=1,\,\ldots,M$, where $l_i$ are some non-zero covectors in $\R^N$.
Recall that this collection defines a semi-lattice $\LL$ which is
the minimal set of linear subspaces of $\R^N$ containing all the
hyperplanes $l_i=0$ and closed with respect to intersection.

\begin{corollary}
\label{corollary:integrable} Let $\FF\subset\R[x_1, \ldots, x_N]$ be
a finite set of polynomials, and $\bar{\FF}$ be the ideal generated
by $\FF$. Choose the numbers $c_i\in\Q$, $i=1,\,\ldots,M$. For a linear subspace
$L\subset\R^N$ define $m(L)=m_{\FF}(L)$ to be the minimal multiplicity of a
function $f\in\FF$ along $L$, and $\kappa(L)=\sum_{L\subset
l_i}c_i$. Then for any $f\in\bar{\FF}$ the function
$$f\prod_{i=1}^M l_i^{-c_i}$$
is locally $L^2$-integrable at any point $P\in\R^N$ provided that
$$
\kappa(L)<\frac{\codim(L)}{2}+m(L)
$$
for any $L\in\LL$.
\end{corollary}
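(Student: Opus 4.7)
The plan is to apply the scheme of the proof of Theorem~\ref{theorem:klt-integrable} to the rational function $\Phi = f^{-2}\prod_{i=1}^M l_i^{2c_i}$ with exponent $c=1$, since local $L^2$-integrability of $\Psi := f\prod_{i=1}^M l_i^{-c_i}$ at $P$ is equivalent to local $L^1$-integrability of $|\Psi|^2 = \Phi^{-1}$. It thus suffices to run the argument of that theorem for this choice of $\Phi$ and verify positivity of the resulting local exponents on a log-resolution of the arrangement.

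I would construct a log-resolution $\pi\colon Y\to\R^N$ of $\bigcup_{i=1}^M\{l_i=0\}$ by successively blowing up (strict transforms of) the subspaces $L\in\LL$ in order of increasing dimension, yielding a simple normal crossings divisor on $Y$ whose components are the strict transforms $\tilde H_i$ of the hyperplanes $H_i=\{l_i=0\}$ together with one exceptional divisor $E_L$ for each $L\in\LL$ with $\codim(L)\ge 2$. The multiplicities to record are: by Example~\ref{example:blow-up} and induction along the tower, $\mult_{E_L}J_\pi = \codim(L)-1$ for the Jacobian $J_\pi$ of $\pi$, while $J_\pi$ does not vanish on $\tilde H_i$; the pullback of each linear form satisfies $\mult_{E_L}\pi^*l_i = 1$ if $L\subset H_i$ and $0$ otherwise, hence $\mult_{E_L}\pi^*\prod_j l_j^{c_j} = \kappa(L)$, together with $\mult_{\tilde H_i}\pi^*l_j = \delta_{ij}$; and writing any $f\in\bar\FF$ as $f=\sum_j h_j g_j$ with $g_j\in\FF$ gives $\mult_L f \ge \min_j \mult_L g_j \ge m(L)$ for every $L\in\LL$, so that $\mult_{E_L}\pi^*f\ge m(L)$ and $\mult_{\tilde H_i}\pi^*f\ge m(H_i)$.

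Substituting these into the argument of Theorem~\ref{theorem:klt-integrable}, near any $Q\in\pi^{-1}(P)$ in local coordinates $y_1,\ldots,y_N$ adapted to the normal crossings components through $Q$, the integrand $|\Psi|^2\pi^*dV$ becomes an invertible analytic function times $\prod_j |y_j|^{\alpha_j}\,dy$, with $\alpha_L = \codim(L)-1+2\mult_{E_L}\pi^*f-2\kappa(L)$ along $E_L$ and $\alpha_i = 2\mult_{\tilde H_i}\pi^*f-2c_i$ along $\tilde H_i$. Each one-variable factor converges iff $\alpha_j > -1$. Along $E_L$ this rearranges to $\kappa(L) < \codim(L)/2 + \mult_{E_L}\pi^*f$, which follows from the hypothesis via $\mult_{E_L}\pi^*f\ge m(L)$; along $\tilde H_i$ it becomes $c_i < 1/2 + \mult_{\tilde H_i}\pi^*f$, which is the hypothesis applied to $L=H_i$ (with $\kappa(H_i)=c_i$ and $\codim(H_i)=1$). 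This yields local integrability of $|\Psi|^2$ at $P$.

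The main obstacle I foresee lies in the multiplicity identity $\mult_{E_L}\pi^*l_i = 1$ for $L\subset H_i$: one has to rule out any inflation of these multiplicities across the iterated blow-ups. This should follow by induction, using that each blow-up centre is the strict transform of a linear subspace of $\R^N$ and is not contained in any previously created exceptional divisor, so that the local computation at each stage reduces to the standard blow-up formula of Example~\ref{example:blow-up}; alternatively one may appeal to the wonderful-model construction of De Concini--Procesi, for which this property is built in.
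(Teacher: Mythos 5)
Your proposal is correct and takes essentially the same route as the paper: the same tower of blow-ups of the strata of $\LL$ in increasing order of dimension, the same observation that all multiplicities along $E_L$ reduce to a single blow-up because the earlier centres miss the generic point of $L$, and the same final inequality $\kappa(L)<\frac{\codim(L)}{2}+\mult_L(f)$ via $\mult_L(f)\ge m(L)$; the only cosmetic difference is that you unwind the proof of Theorem~\ref{theorem:klt-integrable} directly rather than citing it together with Theorem~\ref{theorem:klt-finite}. (One trivial point: if two of the $l_i$ are proportional, the exponent along the common strict transform is $2\mult_{\tilde H_i}\pi^*f-2\kappa(H_i)$ rather than $-2c_i$, which your general use of $\kappa$ already accommodates.)
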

\begin{proof}
Choose a nonzero function $f\in\bar{\FF}$ and put
$$\Phi_f=\frac{\prod l_i^{c_i}}{f}.$$
By Theorems~\ref{theorem:klt-finite}, \ref{theorem:klt-integrable} it is enough to check that
the discrepancies
$$a(\R^N, (\Phi_f^2), E_j)>-1$$
for all exceptional
divisors $E_i$ of a partial log-resolution $\pi:Y\to\R^N$ such that
$\pi^{-1}(\bigcup l_i)\cup Exc(\pi)$ is a normal crossing divisor.
To construct such resolution put
$\pi=\pi_{N-2}\circ\ldots\circ\pi_{0}$, where $\pi_0:Y_0\to\R^N$ is
the blow-up of the point $\text{\textbf{0}}\in\nlb\R^N$, and
$\pi_d:Y_d\to Y_{d-1}$ for $d\ge 1$ is the blow-up of the strict
transforms of all subspaces $L\in\LL$ such that $\dim(L)=d$. Note
that these strict transforms are disjoint on $Y_{d-1}$, so that
$\pi:Y=Y_{N-2}\to\R^N$ indeed enjoys the desired property. Note that
$\pi_{d-1}:Y_{d-1}\to\R^N$ is an isomorphism at a neighborhood of a
general point $P\in\pi_{d-1}^{-1}(L)$ for $L\in\LL$ with
$\dim(L)=d$. Hence the discrepancy $a(\R^N, (\Phi_f^2), E_L)$ of the
exceptional divisor $E_L$ whose center on $\R^N$ is $L$ equals the
discrepancy of the exceptional divisor of the blow-up of $\R^N$
along $L$, which in turn equals
\begin{multline*}
a_L=\codim(L)-1-2\sum c_i\mult_{L}(l_i)+2\mult_{L}(f)=\\
=\codim(L)-1-2\kappa(L)+2\mult_{L}(f)
\end{multline*}
by Example~\ref{example:blow-up}. Hence for $\Phi_f^{-1}$ to be
locally $L^2$-integrable at any $P\in\R^N$ it is enough to satisfy
the inequality
$$
\kappa(L)<\frac{\codim(L)}{2}+\mult_L(f)
$$
for any $L\in\LL$. The required assertion follows since
$$\mult_L(f)\ge\min_{\phi\in\FF}\mult_L(\phi)=m(L).$$
\end{proof}

In the case of singularities of constant order the previous corollary can be rephrased as follows.
\begin{corollary}
\label{corollary:integrable-equal-c} In the above notations assume
that $l_i$ is not proportional to $l_j$ for $i\ne j$ and that $c_i=c$ for all~$1\le i\le M$. Then
for any $f\in\bar{\FF}$ the function
$$f\prod_{i=1}^M l_i^{-c}$$
is locally $L^2$-integrable at any point $P\in\R^N$ provided that
$$
c<\min\limits_{L\in\LL}\frac{\frac{1}{2}\codim(L)+m(L)}{K(L)},
$$
where $K(L)$ is the number of $l_i$ vanishing along $L$.
\end{corollary}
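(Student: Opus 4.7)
The plan is to derive this corollary as an immediate specialization of Corollary~\ref{corollary:integrable}, with no new geometric or analytic input needed. Under the equal-weight assumption $c_i = c$, the quantity $\kappa(L) = \sum_{L \subset \{l_i = 0\}} c_i$ appearing in the previous corollary collapses to $\kappa(L) = c \cdot K(L)$, where $K(L)$ is the number of the $l_i$ vanishing along $L$. Here the non-proportionality hypothesis on the $l_i$ matters, since it ensures that $K(L)$ is genuinely a count of distinct hyperplanes rather than of covectors with hidden multiplicity.

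Next, for any $L \in \LL$ with $K(L) > 0$, the hypothesis $\kappa(L) < \frac{1}{2}\codim(L) + m(L)$ of Corollary~\ref{corollary:integrable} becomes, after dividing by $K(L)$, the inequality
$$
c < \frac{\frac{1}{2}\codim(L) + m(L)}{K(L)}.
$$
Imposing the single uniform bound $c < \min_{L} \big(\frac{1}{2}\codim(L) + m(L)\big)/K(L)$ (where the minimum is read, implicitly, over $L \in \LL$ with $K(L) > 0$, to avoid division by zero) therefore guarantees all of these inequalities simultaneously.

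It remains to observe that strata $L \in \LL$ with $K(L) = 0$ place no restriction: there $\kappa(L) = 0$, so the hypothesis of Corollary~\ref{corollary:integrable} reduces to $0 < \frac{1}{2}\codim(L) + m(L)$, which is automatic because $\codim(L) \ge 1$ for every $L$ in the semi-lattice generated by proper hyperplanes. Invoking Corollary~\ref{corollary:integrable} then yields the desired local $L^2$-integrability of $f \prod l_i^{-c}$ for every $f \in \bar{\FF}$. The only minor subtlety is fixing the convention on which subspaces $L$ are ranged over in the minimum; beyond that, the argument is a purely cosmetic rewriting, and I do not expect any genuine obstacle.
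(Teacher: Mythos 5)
Your proposal is correct and matches the paper's treatment: the paper offers no separate proof, presenting this corollary as a direct rephrasing of Corollary~\ref{corollary:integrable} via the observation that $\kappa(L)=c\,K(L)$ when all $c_i=c$. The only remark is that your case $K(L)=0$ never actually occurs, since every $L\in\LL$ is by construction an intersection of the hyperplanes and hence lies on at least one of them.
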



\section{Elementary estimates}
\label{section:brute-force}

In this section we collect a few technical lemmas which we will need
in Section \ref{section:conclusions}.

Consider the $m$-dimensional space $\R^m\supset\Z^m$.
For any $\xi\in \R_{\ge 0}$ define
$$\Sympl_{m, \xi}=
\{(t_1,\ldots, t_m)\in\R^m \mid 0\le t_i\le \xi, \sum t_i=\xi\}\cap\Z^m.$$
Having fixed $k\in\Z_{+}$, for any $q\in\R$ we define $0\le\rho_k(q)<k$  to
satisfy $\rho_k(q)=q \ \mathrm{mod}\ k$. For $\alpha\in\R$ we denote
the integer part of $\alpha$ by $\lfloor\alpha\rfloor$.

\begin{lemma}
\label{lemma:quadratic-minimum}
Fix $\Lambda\in\Z_{+}$ and put $\rho=\rho_{m}(\Lambda)$.
Consider the function
$$C(t)=C_{m, \Lambda}(t_1,\ldots,t_m)=\sum\limits_{i=1}^m\frac{t_i(t_i-1)}{2}.
$$
Put
$$\mu_{m, \Lambda}=\min\limits_{t\in \Sympl_{m, \Lambda}}C(t).$$
Then
$$\mu_{m,\Lambda}=
\frac{\Lambda(\Lambda-m)}{2m}+\frac{\rho(m-\rho)}{2m}.$$
\end{lemma}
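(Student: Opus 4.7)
The plan is to linearize the problem. Rewriting
$$
C(t)=\tfrac12\sum_{i=1}^m t_i^2-\tfrac12\sum_{i=1}^m t_i=\tfrac12\sum_{i=1}^m t_i^2-\tfrac{\Lambda}{2},
$$
one reduces the minimization of $C$ over $\Sympl_{m,\Lambda}$ to the minimization of $\sum t_i^2$ over the same finite lattice polytope, with the constant $-\Lambda/2$ to be added at the end.

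Next I would show by a standard exchange argument that any minimizer of $\sum t_i^2$ has all coordinates differing pairwise by at most $1$. Indeed, if $t_i\ge t_j+2$ then replacing $(t_i,t_j)$ by $(t_i-1,t_j+1)$ keeps the point inside $\Sympl_{m,\Lambda}$ and changes the sum of squares by $-2(t_i-t_j-1)<0$. Writing $\Lambda=qm+\rho$ with $0\le\rho<m$, this forces the minimizer to consist of $\rho$ coordinates equal to $q+1$ and $m-\rho$ coordinates equal to $q$ (uniquely up to permutation).

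It then remains to substitute back and simplify. At that configuration
$$
\sum_{i=1}^m t_i^2=\rho(q+1)^2+(m-\rho)q^2=mq^2+2q\rho+\rho,
$$
so $\mu_{m,\Lambda}=\tfrac12(mq^2+2q\rho+\rho)-\tfrac{\Lambda}{2}=\tfrac{q}{2}(\Lambda+\rho-m)$. Using $\Lambda=qm+\rho$ and expanding the claimed right-hand side,
$$
\frac{\Lambda(\Lambda-m)}{2m}+\frac{\rho(m-\rho)}{2m}=\frac{qm(qm+2\rho-m)}{2m}=\frac{q}{2}(\Lambda+\rho-m),
$$
so the two expressions agree. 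There is no serious obstacle here: the only point to watch is that the exchange argument strictly decreases the sum (hence the "balanced" configuration really is the unique minimizer), after which the verification is routine algebra.
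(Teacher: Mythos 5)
Your proof is correct and follows essentially the same route as the paper: both reduce to minimizing $\sum t_i^2$ over $\Sympl_{m,\Lambda}$, use the same exchange inequality $(u-1)^2+(v+1)^2<u^2+v^2$ for $u>v+1$ to show the minimizer is the balanced configuration with $\rho$ entries equal to $q+1$ and $m-\rho$ equal to $q$, and then verify the closed form by direct computation. The algebraic simplification you give checks out.
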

\begin{proof}
Note that for any $u,v\in\R$ one has
$$(u-1)^2+(v+1)^2<u^2+v^2$$
provided that $u>v+1$. Thus the minimum
$$
\mu_{m, \Lambda}=\frac{1}{2}\min\limits_{t\in\Sympl_{m,\Lambda}}
\Big(\sum\limits_{i=1}^m t_i^2\Big)-\frac{\Lambda}{2}
$$
is attained at a point $A=(a_1,\ldots,a_m)\in\Sympl_{m,\Lambda}$ such that
for any $i$ and $j$ one has $|a_i-a_j|\le 1$.
Since $C(t)$ is invariant under permutations of coordinates, we may assume
that
$$
A=\Big(\underbrace{\blfloor\frac{\Lambda}{m}\brfloor+1, \ldots,
\blfloor\frac{\Lambda}{m}\brfloor+1}_{\rho},
\underbrace{\blfloor\frac{\Lambda}{m}\brfloor, \ldots,
\blfloor\frac{\Lambda}{m}\brfloor}_{m-\rho}\Big).
$$
Write $\Lambda=sm+\rho$ for some $s\in\Z_{\ge 0}$.
One has
\begin{multline*}
\mu_{m,\Lambda}=C(A)=\frac{1}{2}
\Big(\rho\Big(\blfloor\frac{\Lambda}{m}\brfloor+1\Big)
\blfloor\frac{\Lambda}{m}\brfloor+
\big(m-\rho)\Big(\blfloor\frac{\Lambda}{m}\brfloor-1\Big)
\blfloor\frac{\Lambda}{m}\brfloor=\\
=\frac{1}{2}\big(\rho(s+1)s+(m-\rho)(s-1)s\big)=
\frac{m^2(s^2-s)+2\rho ms}{2m}=\\
=\frac{(ms+\rho)(ms+\rho-m)+\rho(m-\rho)}{2m}=\\
=\frac{\Lambda(\Lambda-m)}{2m}+\frac{\rho(m-\rho)}{2m}.
\end{multline*}
\end{proof}


\begin{lemma}\label{lemma:bolshie-ostatki-ab}
Take $a, b, z\in\Z_{+}$. Choose
$\alpha\in\R$ such that $0\le\alpha\le ab/(a+b)^2$ and put
$$\Lambda_1=\frac{za}{a+b}-\alpha (a+b), \quad
\Lambda_2=\frac{zb}{a+b}+\alpha (a+b).$$
Let
$$F=b\rho_a(\Lambda_1)\big(a-\rho_a(\Lambda_1)\big)+
a\rho_b(\Lambda_2)\big(b-\rho_b(\Lambda_2)\big).$$
Then
$$
F\ge \alpha(a+b)\big(ab-\alpha(a+b)^2\big).$$
\end{lemma}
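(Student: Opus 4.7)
\emph{Setup.} Substitute $\beta=\alpha(a+b)$, so that $P:=\beta(a+b)=\alpha(a+b)^2$; the hypothesis becomes $P\le ab$, and the target inequality becomes $F\ge \beta(ab-P)$. Write $z=q(a+b)+w$ with $q\in\Z_{\ge 0}$ and $w\in\{0,\ldots,a+b-1\}$, and set $\xi=w/(a+b)\in[0,1)$. Then $\Lambda_1/a=q+\xi-\beta/a$ and $\Lambda_2/b=q+\xi+\beta/b$, and since $\beta/a,\beta/b<1$ we fall into exactly one of three exhaustive cases:
$$\textup{(A)}\ \beta/a\le\xi<1-\beta/b;\quad \textup{(B)}\ \xi<\beta/a;\quad \textup{(C)}\ \xi\ge 1-\beta/b.$$
(Using $P\le ab$ one checks that (B) and (C) are mutually exclusive.) In these cases the residues $r=\rho_a(\Lambda_1)$ and $s=\rho_b(\Lambda_2)$ are respectively $(a\xi-\beta,\,b\xi+\beta)$, $(a+a\xi-\beta,\,b\xi+\beta)$, $(a\xi-\beta,\,b\xi+\beta-b)$; moreover $n:=r+s=\Lambda_1+\Lambda_2-(\text{integer})$ is itself an integer, equal to $w$, $w+a$, or $w-b$ respectively.

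\emph{Reduction to a quadratic condition.} Setting $s=n-r$ and expanding gives the elementary identity
$$F=br(a-r)+as(b-s)=abn-br^2-as^2.$$
After rearrangement, the target $F\ge \beta(ab-P)$ becomes
\begin{equation*}
(a+b)^2\Bigl(r-\tfrac{an}{a+b}\Bigr)^2\;\le\; abn(a+b-n)-P(ab-P).
\end{equation*}
A short direct computation shows that $(a+b)(r-an/(a+b))$ equals $-P$ in case (A) and $ab-P$ in cases (B) and (C). Using the identities $P^2+P(ab-P)=abP$ and $(ab-P)^2+P(ab-P)=ab(ab-P)$, the quadratic inequality reduces respectively to
\begin{equation*}
\textup{(A)}\ w(a+b-w)\ge P;\quad \textup{(B)}\ (w+a)(b-w)\ge ab-P;\quad \textup{(C)}\ (w-b)(a+2b-w)\ge ab-P.
\end{equation*}

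\emph{Verification and main obstacle.} In case (A) the case conditions give $w\in[P/a,\,a+b-P/b]$; the concave function $w(a+b-w)$ is minimized on this interval at an endpoint, and each endpoint value is $\ge P$ precisely when $P\le ab$. In case (B) the reduced inequality is equivalent to $P\ge w(w+a-b)$; if $w+a-b\le 0$ this is trivial, and otherwise $w<P/a$ gives
$$w(w+a-b)<\frac{P}{a}\Bigl(\frac{P}{a}+a-b\Bigr)=\frac{P(P+a^2-ab)}{a^2}\le P,$$
the last step using $P\le ab$. Case (C) is symmetric to (B) under the substitution $w'=a+b-w\in(0,P/b]$, rewriting the reduced inequality as $P\ge w'(w'+b-a)$. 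The main obstacle is the initial case analysis---carefully identifying the three possible values of the floors $\lfloor\Lambda_1/a\rfloor$ and $\lfloor\Lambda_2/b\rfloor$---after which each branch reduces cleanly to the single hypothesis $P\le ab$.
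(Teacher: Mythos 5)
Your proof is correct and follows essentially the same strategy as the paper's: determine which branch of the residues $\rho_a(\Lambda_1)$, $\rho_b(\Lambda_2)$ applies via a case analysis, then verify the resulting quadratic inequality using only the hypothesis $\alpha(a+b)^2\le ab$. The paper organizes this as two cases parametrized by $\rho_a(\Lambda_1)$ and minimizes a concave quadratic over each subinterval by evaluating at endpoints, whereas you parametrize by $z\bmod (a+b)$ and obtain three cases reduced by completing the square; the substance is the same.
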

\begin{proof}
Write $\Lambda_1=sa+\rho_a(\Lambda_1)$, $s\in\Z_{\ge 0}$. Then
$$\Lambda_2=sb+\rho_a(\Lambda_1)\frac{b}{a}+\alpha\frac{(a+b)^2}{a},$$
and
$$0\le\rho_a(\Lambda_1)\frac{b}{a}+\alpha\frac{(a+b)^2}{a}<2b$$
by assumptions.
Put
$$\mathcal{S}_1=\Big[0, \frac{a}{b}\big(b-\alpha\frac{(a+b)^2}{a}\big)\Big)
\subset [0, a)\subset\R,$$
and
$$\mathcal{S}_2=[0, a)\setminus\mathcal{S}_1=
\Big[\frac{a}{b}\big(b-\alpha\frac{(a+b)^2}{a}\big), a\Big)\subset\R.$$

Suppose first that $\rho_a(\Lambda_1)\in\mathcal{S}_1$. Then
$$\rho_a(\Lambda_1)\frac{b}{a}+\alpha\frac{(a+b)^2}{a}<b,$$
and
$$\rho_b(\Lambda_2)=\rho_a(\Lambda_1)\frac{b}{a}+\alpha\frac{(a+b)^2}{a}.$$
Note that $F=F\big(\rho_a(\Lambda_1)\big)$
is a quadratic function in $\rho_a(\Lambda_1)$ with
negative coefficient at $\rho_a(\Lambda_1)^2$.
Thus
\begin{multline*}
\inf\limits_{\rho_a(\Lambda_1)\in\mathcal{S}_1}F\big(\rho_a(\Lambda_1)\big)
\ge \min\Big\{F\big(0\big),
F\Big(\frac{a}{b}\big(b-\alpha\frac{(a+b)^2}{a}\big)\Big)\Big\}=\\
=
\min\Big\{\alpha\frac{(a+b)^2}{a}\big(ab-\alpha(a+b)^2\big),
\alpha\frac{(a+b)^2}{b}\big(ab-\alpha(a+b)^2\big)\Big\}\ge \\
\ge \alpha(a+b)(ab-\alpha(a+b)^2).
\end{multline*}

Suppose now that $\rho_a(\Lambda_1)\in\mathcal{S}_2$. Then
$$b\le \rho_a(\Lambda_1)\frac{b}{a}+\alpha\frac{(a+b)^2}{a}<2b,$$
and
$$\rho_b(\Lambda_2)=\rho_a(\Lambda_1)\frac{b}{a}+\alpha\frac{(a+b)^2}{a}-b.$$
Note that $F=F\big(\rho_a(\Lambda_1)\big)$
is again a quadratic function in $\rho_a(\Lambda_1)$ with
negative coefficient at $\rho_a(\Lambda_1)^2$.
Thus
\begin{multline*}
\inf\limits_{\rho_a(\Lambda_1)\in\mathcal{S}_2}F\big(\rho_a(\Lambda_1)\big)
\ge\min\Big\{F\Big(\frac{a}{b}\big(b-\alpha\frac{(a+b)^2}{a}\big)\Big),
F\big(a\big)\Big\}=\\
=\min\Big\{\alpha\frac{(a+b)^2}{a}\big(ab-\alpha(a+b)^2\big),
\alpha\frac{(a+b)^2}{b}\big(ab-\alpha(a+b)^2\big)\Big\}\ge \\
\ge \alpha(a+b)(ab-\alpha(a+b)^2).
\end{multline*}
\end{proof}

\begin{lemma}\label{lemma:krivoj-krokodil-ab}
Fix $a, b, z\in\Z_{+}$.
Consider the function
$$\tilde{C}(t)=\tilde{C}(t_1,\ldots,t_{a+b})=
\sum\limits_{i=1}^a t_i+
\sum\limits_{i=1}^{a+b}t_i(t_i-1).$$
Put
$$\tilde \mu = \tilde{\mu}_{a, b, z}=
\min\limits_{t\in \Sympl_{a+b,z}}\tilde{C}(t).$$
Then
\beq{lemineq}
\tilde{\mu}\ge \frac{z^2}{a+b}-\frac{b}{a+b}z.
\eeq
\end{lemma}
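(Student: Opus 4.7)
The plan is to reduce to Lemma~\ref{lemma:quadratic-minimum} by splitting $t\in\Sympl_{a+b,z}$ according to the first $a$ and the last $b$ coordinates, and then to compensate the slack between that reduction and the target bound by invoking Lemma~\ref{lemma:bolshie-ostatki-ab}. Concretely, I fix $u=\sum_{i=1}^{a}t_i$, so that $z-u=\sum_{i=a+1}^{a+b}t_i$, and apply Lemma~\ref{lemma:quadratic-minimum} to the two truncations $(t_1,\ldots,t_a)\in\Sympl_{a,u}$ and $(t_{a+1},\ldots,t_{a+b})\in\Sympl_{b,z-u}$. Using $u+u(u-a)/a=u^2/a$ to absorb the linear contribution $\sum_{i=1}^{a}t_i=u$ from $\tilde{C}$ into the first quadratic minimum, this yields
\begin{equation*}
\tilde{C}(t)\ge \phi(u)+\psi,\qquad \psi=\frac{\rho_a(u)\bigl(a-\rho_a(u)\bigr)}{a}+\frac{\rho_b(z-u)\bigl(b-\rho_b(z-u)\bigr)}{b}\ge 0,
\end{equation*}
where
\begin{equation*}
\phi(u)=\frac{u^2}{a}+\frac{(z-u)^2}{b}-(z-u).
\end{equation*}

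Next I would pass to the change of variable $u=\frac{za}{a+b}-\alpha(a+b)$ (equivalently $z-u=\frac{zb}{a+b}+\alpha(a+b)$) and verify by a direct expansion, using $\frac{1}{a}+\frac{1}{b}=\frac{a+b}{ab}$, the key identity
\begin{equation*}
\phi(u)-\frac{z(z-b)}{a+b}=-\frac{\alpha(a+b)\bigl(ab-\alpha(a+b)^2\bigr)}{ab}.
\end{equation*}
As a function of $\alpha$, the right-hand side is a quadratic with roots $0$ and $ab/(a+b)^2$ and a positive $\alpha^2$-coefficient, so it is non-positive precisely on the closed interval $[0,\,ab/(a+b)^2]$ and non-negative outside it.

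The proof then splits into two cases. If $\alpha\le 0$ or $\alpha\ge ab/(a+b)^2$, the displayed quantity is $\ge 0$, hence $\phi(u)\ge z(z-b)/(a+b)$ all by itself, and the non-negativity of $\psi$ finishes the inequality. If $0<\alpha<ab/(a+b)^2$, then the integers $\Lambda_1=u$, $\Lambda_2=z-u$ and this $\alpha$ satisfy the hypotheses of Lemma~\ref{lemma:bolshie-ostatki-ab}; observing that $ab\,\psi$ coincides with the quantity $F$ of that lemma, its conclusion reads $\psi\ge \alpha(a+b)\bigl(ab-\alpha(a+b)^2\bigr)/(ab)$, which is exactly the absolute value of the right-hand side of the previous display. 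Thus $\phi(u)+\psi\ge z(z-b)/(a+b)$ in this case as well, and taking the minimum over $t\in\Sympl_{a+b,z}$ gives~\mref{lemineq}.

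The main obstacle is really only the bookkeeping in the identity for $\phi(u)$ after the substitution $u\mapsto\alpha$: the whole argument is arranged so that, once that identity has the form displayed, Lemma~\ref{lemma:bolshie-ostatki-ab} plugs in with no remainder and nothing further has to be chased.
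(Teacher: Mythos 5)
Your proposal is correct and follows essentially the same route as the paper: split $t$ into the first $a$ and last $b$ coordinates, apply Lemma~\ref{lemma:quadratic-minimum} to each block, substitute $\Lambda_1=\frac{za}{a+b}-\alpha(a+b)$, and use Lemma~\ref{lemma:bolshie-ostatki-ab} to make the remainder terms $F/(ab)$ compensate the negative quadratic term exactly when $0\le\alpha\le ab/(a+b)^2$. The only (immaterial) difference is that you treat the case $\alpha\notin[0,ab/(a+b)^2]$ explicitly, whereas the paper dismisses it with a one-line "we may assume".
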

\begin{proof}
Denote the $a$-tuple $(t_1, \ldots, t_{a})$ by $T_1$, and denote the
$b$-tuple $(t_{a+1}, \ldots, t_{a+b})$ by $T_2$. Put $\sum_{i=1}^a
t_i=\Lambda_1$ and $\Lambda_2=z-\Lambda_1$. One has
\begin{multline*}
\tilde{\mu}=\min\limits_{\Lambda_1+\Lambda_2=z}\Big(
\min\limits_{t\in\Sympl_{a+b, z}}\big(\Lambda_1+
\sum\limits_{i=1}^{a}t_i(t_i-1)
+\sum\limits_{i=a+1}^{a+b}t_i(t_i-1)
\big)\Big)
=\\
=\min\limits_{\Lambda_1+\Lambda_2=z}\big(
\Lambda_1+2\min\limits_{T_1\in\Sympl_{a,\Lambda_1}}
C_{a}(T_1)+
2\min\limits_{T_2\in\Sympl_{b,\Lambda_2}}
C_{b}(T_2)\big)=\\
=\min\limits_{\Lambda_1+\Lambda_2=z}\big(\Lambda_1+
2\mu_{a, \Lambda_1}+2\mu_{b, \Lambda_2}\big)=\\
=\min\limits_{\Lambda_1+\Lambda_2=z}\Big(\Lambda_1+
\frac{\Lambda_1(\Lambda_1-a)+\rho_a(\Lambda_1)(a-\rho_a(\Lambda_1))}{a}
+\\
+\frac{\Lambda_2(\Lambda_2-b)+\rho_b(\Lambda_2)(b-\rho_b(\Lambda_2))}{b}
\Big)=\\
=\min\limits_{\Lambda_1+\Lambda_2=z}
\Big(\frac{b\Lambda_1^2+a\Lambda_2^2-ab\Lambda_2}{ab}+\\
+\frac{b\rho_a(\Lambda_1)(a-\rho_a(\Lambda_1))
+a\rho_b(\Lambda_2)(b-\rho_b(\Lambda_2))}{ab}\Big).
\end{multline*}

Suppose that the minimum is attained for $\Lambda_1=az/(a+b)-\alpha (a+b)$,
$\Lambda_2=zb/(a+b)+\alpha (a+b)$ (note that we don't assume that $\alpha$
is nonnegative). Then
\begin{multline*}
\frac{b\Lambda_1^2+a\Lambda_2^2-ab\Lambda_2}{ab}=
\frac{z^2}{a+b}-\frac{b}{a+b}z+
\alpha\frac{a+b}{ab}\big(\alpha(a+b)^2-ab\big)
\end{multline*}
Thus to conclude the proof we may assume that
$\alpha(\alpha(a+b)^2-ab)\le 0$, i.\,e. $0\le\alpha\le ab/(a+b)^2$,
and the assertion follows by Lemma~\ref{lemma:bolshie-ostatki-ab}.
\end{proof}

Choose nonnegative integers $N\ge 2$ and $z\le N$, and let
$\lambda=(\lambda_1, \ldots, \lambda_{l(\lambda)})$
be a partition of $N-z$, i.\,e.
$$\lambda_1\ge\ldots\ge\lambda_{l(\lambda)}>0 \text{ and }
\sum\lambda_i=N-z.$$
(In particular, we allow an ``empty'' partition when $l(\lambda)=0$
and $z=\nlb N$.)
Put
\beq{Rklambda}
R_k(\lambda)=\sum_{i=1}^{l(\lambda)}
\rho_k(\lambda_i)(k-\rho_k(\lambda_i)).
\eeq

\begin{lemma}\label{lemma:type-B-crocodile}
Let $k\in\Z_{+}$, and $\lambda$ be a partition of $N-z$ as above.
Then
\begin{multline}
\label{eq:B-croco}
\sum_{i=1}^{l(\lambda)}\frac{\lambda_i(\lambda_i-k)}{k}+
\frac{R_k(\lambda)}{k}+ N-l(\lambda)\ge
\sum_{i=1}^{l(\lambda)}\frac{\lambda_i(\lambda_i-1)}{k}+z.
\end{multline}
\end{lemma}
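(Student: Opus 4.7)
The plan is to subtract the right-hand side from the left-hand side of \eqref{eq:B-croco}, notice a great deal of cancellation, and reduce the claim to a simple term-by-term inequality on the parts of $\lambda$.

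More explicitly, I would first rewrite the left-hand side as
$$\frac{1}{k}\sum_{i=1}^{l(\lambda)}\lambda_i^2-\sum_{i=1}^{l(\lambda)}\lambda_i+\frac{R_k(\lambda)}{k}+N-l(\lambda)=\frac{1}{k}\sum_{i=1}^{l(\lambda)}\lambda_i^2+z-l(\lambda)+\frac{R_k(\lambda)}{k},$$
using $\sum\lambda_i=N-z$, and rewrite the right-hand side as
$$\frac{1}{k}\sum_{i=1}^{l(\lambda)}\lambda_i^2-\frac{N-z}{k}+z.$$
Subtracting, \eqref{eq:B-croco} becomes equivalent to the much cleaner inequality
$$R_k(\lambda)+(N-z)\ge k\,l(\lambda),$$
i.e.
$$\sum_{i=1}^{l(\lambda)}\Bigl(\rho_k(\lambda_i)\bigl(k-\rho_k(\lambda_i)\bigr)+\lambda_i\Bigr)\ge k\,l(\lambda).$$

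It then suffices to prove the termwise estimate $\rho_k(q)(k-\rho_k(q))+q\ge k$ for every integer $q\ge 1$. Writing $q=ks+r$ with $r=\rho_k(q)\in\{0,1,\ldots,k-1\}$, if $s\ge 1$ then already $q\ge k$ and we are done. If $s=0$ then $q=r$ with $1\le r\le k-1$ (this range is non-empty only for $k\ge 2$; the case $k=1$ is immediate since $r=0$ and $q\ge 1$), and the expression becomes $r(k-r)+r=r(k-r+1)$, a downward-opening quadratic in $r$ with roots $0$ and $k+1$. Its minimum on the integer interval $[1,k-1]$ is attained at the endpoints, where it equals $k$ and $2(k-1)$ respectively, both of which are $\ge k$.

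The main (and really the only) obstacle is the initial algebraic simplification; after that the problem dissolves into an elementary check on a single integer variable. No case analysis beyond splitting on whether $\lambda_i\ge k$ is needed, and the fact that partitions have \emph{positive} parts is used in an essential way to get $\lambda_i\ge 1$.
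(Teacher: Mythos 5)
Your proof is correct and follows essentially the same route as the paper: both arguments reduce \eqref{eq:B-croco}, after the cancellation using $\sum\lambda_i=N-z$, to the inequality $R_k(\lambda)+(N-z)\ge k\,l(\lambda)$, and both establish that by an elementary part-by-part estimate (the paper splits the parts into those divisible by $k$, using $\lambda_i\ge k$, and the rest, using $\lambda_i+\rho_k(\lambda_i)(k-\rho_k(\lambda_i))\ge 1+(k-1)=k$, which is the same termwise bound you prove).
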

\begin{proof}
Denote by $v$ the number of $\lambda_i$ which are divisible by $k$.
Then $N-z \ge kv + l(\lambda)-v$, and $R_k(\lambda) \ge (k-1)(l(\lambda)-v)$,
hence
$$
N+R_k(\lambda) \ge kl(\lambda) +z.
$$

One has
\begin{multline*}
\sum_{i=1}^{l(\lambda)}\frac{\lambda_i(\lambda_i-k)}{k}+
\frac{R_k(\lambda)}{k}+ N-l(\lambda)=
\sum_{i=1}^{l(\lambda)}\frac{\lambda_i^2}{k}+
\frac{R_k(\lambda)}{k} + z - l(\lambda)=\\
=
\sum_{i=1}^{l(\lambda)}\frac{\lambda_i^2}{k}+
\frac{N+R_k(\lambda)-kl(\lambda)-z}{k}-\frac{N-z}{k}
+ z \ge \\
\ge
\sum_{i=1}^{l(\lambda)}\frac{\lambda_i^2}{k}-
\frac{\sum_{i=1}^{l(\lambda)}\lambda_i}{k}+
z=
\sum_{i=1}^{l(\lambda)}\frac{\lambda_i(\lambda_i-1)}{k}+
z.
\end{multline*}
\end{proof}

\begin{lemma}\label{lemma:type-A-crocodile}
Let $k\in\Z_{+}$,
and let $\lambda$ be a partition of $N$.
Assume that $\lambda_1>1$.
Then
$$
\sum_{i=1}^{l(\lambda)}\frac{\lambda_i(\lambda_i-k)}{k}+
\frac{R_k(\lambda)}{k}+N-l(\lambda)
>
\sum_{i=1}^{l(\lambda)}\frac{\lambda_i(\lambda_i-1)}{k+1}.
$$
\end{lemma}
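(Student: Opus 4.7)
The plan is to obtain this inequality as an immediate consequence of Lemma~\ref{lemma:type-B-crocodile} applied with $z=0$. Since a partition of $N$ is just a partition of $N-z$ with $z=0$, and the hypothesis $0\le z\le N$ of Lemma~\ref{lemma:type-B-crocodile} accommodates this case, I would specialize \eqref{eq:B-croco} directly to get
$$
\sum_{i=1}^{l(\lambda)}\frac{\lambda_i(\lambda_i-k)}{k}+\frac{R_k(\lambda)}{k}+N-l(\lambda)\ \ge\ \sum_{i=1}^{l(\lambda)}\frac{\lambda_i(\lambda_i-1)}{k}.
$$

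The remaining task is then to pass from denominator $k$ to denominator $k+1$ on the right-hand side, picking up strict inequality. I would write
$$
\sum_{i=1}^{l(\lambda)}\frac{\lambda_i(\lambda_i-1)}{k}-\sum_{i=1}^{l(\lambda)}\frac{\lambda_i(\lambda_i-1)}{k+1}=\frac{1}{k(k+1)}\sum_{i=1}^{l(\lambda)}\lambda_i(\lambda_i-1),
$$
and observe that the hypothesis $\lambda_1>1$ forces $\lambda_1\ge 2$, hence $\lambda_1(\lambda_1-1)\ge 2>0$, so this difference is strictly positive. Combining these two steps yields the required strict inequality.

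There is no real obstacle here; the content was already packaged in Lemma~\ref{lemma:type-B-crocodile}, and the only role of the assumption $\lambda_1>1$ is to ensure that at least one term in $\sum\lambda_i(\lambda_i-1)$ is nonzero so as to promote $\ge$ to $>$. (If one had $\lambda_1=1$, then $\lambda$ would be $(1,\ldots,1)$, both sides of the desired inequality would vanish, and the statement would fail.)
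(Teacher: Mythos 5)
Your argument is exactly the paper's proof: apply Lemma~\ref{lemma:type-B-crocodile} with $z=0$ and then note that replacing the denominator $k$ by $k+1$ strictly decreases the right-hand side precisely because $\lambda_1>1$ makes $\sum\lambda_i(\lambda_i-1)$ nonzero. The proposal is correct and matches the paper's reasoning step for step.
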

\begin{proof}
By Lemma~\ref{lemma:type-B-crocodile} applied for $z=0$ one has
\begin{multline*}
\sum_{i=1}^{l(\lambda)}\frac{\lambda_i(\lambda_i-k)}{k}+
\frac{R_k(\lambda)}{k}+N-l(\lambda)\ge
\\
\ge
\sum_{i=1}^{l(\lambda)}\frac{\lambda_i(\lambda_i-1)}{k}
\ge
\sum_{i=1}^{l(\lambda)}\frac{\lambda_i(\lambda_i-1)}{k+1}.
\end{multline*}
Moreover, the last inequality is strict if the right hand side is non-zero,
which happens exactly when $\lambda_1>1$.
\end{proof}

\begin{lemma}\label{lemma:type-B-crocodile-old}
Let $\lambda$ be a partition of $N-z$. Assume that
$z \ge 1$
if $\lambda_1=1$. Then
\begin{multline}
\label{lem46}
\sum_{i=1}^{l(\lambda)}\frac{\lambda_i(\lambda_i-k)}{2k}+
\frac{R_k(\lambda)}{2k}+ \frac{z(z-k)}{k}+\frac{\rho_k(z)(k-\rho_k(z))}{k}+\\
+\max(0,\lfloor z-s\rfloor)+\frac{N-l(\lambda)}{2} >
\\
>
\sum_{i=1}^{l(\lambda)}\frac{\lambda_i(\lambda_i-1)}{2(k+1)}+
\frac{z(z-1)}{k+1}+
\frac{\frac{k+1}2-s}{k+1}z,
\end{multline}
where $k \in \Z_+$, $s \in \R$.
\end{lemma}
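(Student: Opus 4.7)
The plan is to decouple the estimate into a contribution from the partition $\lambda$ and a contribution from $z$, controlled by Lemmas~\ref{lemma:type-B-crocodile} and~\ref{lemma:quadratic-minimum} respectively, and then combine them. For the $\lambda$-part, apply Lemma~\ref{lemma:type-B-crocodile} to $\lambda$ as a partition of $N-z$ with its auxiliary ``$z$''-parameter set to zero, and divide through by $2$, obtaining
\[
\sum_{i=1}^{l(\lambda)}\frac{\lambda_i(\lambda_i-k)}{2k}+\frac{R_k(\lambda)}{2k}+\frac{N-z-l(\lambda)}{2}\ \ge\ \sum_{i=1}^{l(\lambda)}\frac{\lambda_i(\lambda_i-1)}{2k}\ \ge\ \sum_{i=1}^{l(\lambda)}\frac{\lambda_i(\lambda_i-1)}{2(k+1)},
\]
the second inequality being strict precisely when $\lambda_1>1$. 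Splitting $\tfrac{N-l(\lambda)}{2}=\tfrac{N-z-l(\lambda)}{2}+\tfrac{z}{2}$ on the left-hand side of \mref{lem46} and cancelling the $\lambda$-terms reduces the claim to the purely numerical inequality
\[
\frac{z(z-k)}{k}+\frac{\rho_k(z)(k-\rho_k(z))}{k}+\max\bigl(0,\lfloor z-s\rfloor\bigr)+\frac{z}{2}\ >\ \frac{z(z-1)}{k+1}+\frac{(k+1)/2-s}{k+1}z,
\]
where strictness is only required when $z\ge 1$, the case $z=0$ being covered by the strict inequality on the partition side under the standing hypothesis $\lambda_1>1$.

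Next, invoke Lemma~\ref{lemma:quadratic-minimum} with $m=k$, $\Lambda=z$ to identify $\tfrac{z(z-k)}{k}+\tfrac{\rho_k(z)(k-\rho_k(z))}{k}=2\mu_{k,z}\ge 0$. The two $\tfrac{z}{2}$ contributions on either side cancel, and after elementary algebra the target collapses further to
\[
2\mu_{k,z}+\max\bigl(0,\lfloor z-s\rfloor\bigr)\ >\ \frac{z(z-s-1)}{k+1}\qquad(z\ge 1).
\]

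The main obstacle is this last inequality, which I would dispatch by case analysis on the relative sizes of $z$, $s+1$, and $k+1$. If $z\le s+1$ the right-hand side is non-positive whereas the left-hand side is non-negative, with strictness at $z\ge 1$ coming from $\tfrac{z(s+1-z)}{k+1}>0$ for $z<s+1$ and from $\lfloor z-s\rfloor\ge 1>0$ at $z=s+1$. If $z>s+1$, use the bound $\lfloor z-s\rfloor\ge z-s-1$ (which is itself strict, so contributes some slack) to rewrite the claim as $2(k+1)\mu_{k,z}\ge(z-s-1)(z-k-1)$: this is trivial in the sub-range $s+1<z\le k+1$ where the right-hand side is non-positive, and for $z>k+1$ it follows from the quadratic growth $2\mu_{k,z}\ge z(z-k)/k$, with the residual contribution $\rho_k(z)(k-\rho_k(z))/k$ absorbing the integer rounding slack in the sub-cases where $z-s$ is close to an integer.
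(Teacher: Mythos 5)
Your reduction is exactly the paper's: applying Lemma~\ref{lemma:type-B-crocodile} to $\lambda$ with vanishing $z$-parameter gives the same inequality as the paper's application of it with parameter $z$, and after cancelling the two $\frac{z}{2}$'s your target $2\mu_{k,z}+\max(0,\lfloor z-s\rfloor)>\frac{z(z-s-1)}{k+1}$ is precisely the paper's key inequality \mref{lemmaBmainid}. Your treatment of $z=0$ (strictness coming from $\lambda_1>1$ on the partition side) matches the paper, and your first two cases, $z\le s+1$ and $s+1<z\le k+1$, are correct and cover the paper's cases $z<s+1$ and $s+1\le z\le k$.

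The gap is your final sub-case $z>k+1$, which you do not actually prove. The inequality you need there, $2(k+1)\mu_{k,z}\ge(z-s-1)(z-k-1)$, has a right-hand side that is unbounded as $s\to-\infty$ while the left-hand side is independent of $s$, so no appeal to the quadratic growth of $\mu_{k,z}$ or to the residue term can close it. This is not only a defect of your argument: the lemma as stated, for arbitrary $s\in\R$, is false in this regime. Take $k=1$, $z=N=10$, $\lambda$ the empty partition, $s=-100$: the left-hand side of \mref{lem46} is $90+0+110+5=205$, while the right-hand side is $45+505=550$. The paper's own proof has the same flaw in its last case $z\ge k+1$, where the asserted implication needs $\frac{z^2}{k}-s-1\ge\frac{z^2}{k+1}-\frac{s+1}{k+1}z$, i.e. $\frac{z^2}{k}\ge(s+1)(k+1-z)$, which fails for $s<-1$ and $z>k+1$. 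Your sub-case does go through under the extra hypothesis $s\ge-1$, since then $(z-s-1)(z-k-1)\le z(z-k-1)\le\frac{k+1}{k}\,z(z-k)\le 2(k+1)\mu_{k,z}$; and in every application of the lemma (Theorem~\ref{proposition:B-type-second} and Remark~\ref{remarkafterlem}) one has either $s\ge k$ or $z\le k$, so nothing downstream is affected. But as written, both your final step and the lemma itself require such a restriction on $s$ to be made explicit.
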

\begin{proof}
It is sufficient to establish that the left-hand side of \mref{lem46}
is greater or
equal than
\beq{rhs}
\frac{1}{2k} \sum \lambda_i^2 -\frac{N-z}{2k}
+\frac{z^2}{k+1}+ \frac{\frac{k-1}{2}-s}{k+1}z,
\eeq
and that it is
strictly bigger than \mref{rhs} when $\lambda_1=1$ or $l(\lambda)=0$
(which are exactly the cases when the first of the three summands in the
right-hand side of \mref{lem46} vanishes).

By Lemma \ref{lemma:type-B-crocodile} our statement is implied by the
inequality \beq{lemmaBmainid} \frac{z(z-k)}{k}
+\frac{\rho_k(z)(k-\rho_k(z))}{k}+\max(0, \lfloor z-s\rfloor ) >
\frac{z^2}{k+1} - \frac{s+1}{k+1}z. \eeq Moreover, we may assume
that $z>0$. Indeed, the case $z=0$ leads to the equality in
\mref{lemmaBmainid}, but in this case $l(\lambda)>0$ and
$\lambda_1>1$ by assumption.

It is clear that the inequality \mref{lemmaBmainid}
holds for $s \ge k$ as in this case
$$
\frac{z(z-k)}{k}
> \frac{z^2}{k+1} -
\frac{s+1}{k+1}z
$$
so let us suppose that $s<k$. Now consider few possible cases for
the values of $z$.  When $z <s+1$ the left-hand side of
\mref{lemmaBmainid} equals 0 and the inequality holds. When $s+1 \le
z \le k$ the left-hand side of \mref{lemmaBmainid} equals $\lfloor
z-s\rfloor$ so the inequality \mref{lemmaBmainid} takes the form
$$
z^2 - (s+1)z-(k+1)\lfloor z-s\rfloor<0.
$$
This inequality is correct since $-\lfloor z-s\rfloor < 1-z+s$ and
$$z^2-(s+1)z+(k+1)(1-z+s)=(z-s-1)(z-k-1)\le 0.$$
Finally, when $z\ge k+1$ the left-hand side of \mref{lemmaBmainid}
is bigger than
$$
\frac{z^2}{k}-s-1,
$$
and \mref{lemmaBmainid} holds.
\end{proof}

\begin{remark}\label{remarkafterlem}
It follows from the proof of Lemma~\ref{lemma:type-B-crocodile-old} that
its assertion remains true when $\max(0, \lfloor z-s\rfloor)$ is
replaced by $0$ in the case $s \ge k$.
\end{remark}


\section{Conclusions on unitarity for classical root systems}
\label{section:conclusions}

In this section we apply the previous estimates
to establish unitarity of certain submodules
in the polynomial representation. We start with the $A_{N-1}$ case.

\begin{proposition}
\label{proposition:A-type}
In the notations of Section~\ref{section:ideals} the function
$$\frac{f}{\prod_{i<j}^N (x_i-x_j)^c}$$
is locally $L^2$-integrable for all $f\in I_k$ \lb$1\le k \le
N-1$\rb\ provided that $c\le 1/(k+1)$.
\end{proposition}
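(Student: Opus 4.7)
The plan is to apply Corollary~\ref{corollary:integrable-equal-c} to the hyperplane arrangement $\{x_i-x_j=0:1\le i<j\le N\}$, taking $\FF$ to be the $\Sy_N$-orbit of $p_{\nu_N^k}(x)$. By Proposition~\ref{pr1} the ideal $\bar{\FF}$ generated by $\FF$ is exactly $I_k$, so the proposition reduces to verifying the strict inequality $1/(k+1)<(\frac12\codim(L)+m_{\FF}(L))/K(L)$ for every relevant $L\in\LL$; once established, this automatically covers all $c\le 1/(k+1)$. Elements of $\LL$ are classified up to $\Sy_N$ by partitions $\lambda=(\lambda_1,\ldots,\lambda_{l(\lambda)})$ of $N$ which record the block sizes of the coincidence pattern cutting out $L_\lambda$. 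Hence $\codim(L_\lambda)=N-l(\lambda)$ and $K(L_\lambda)=\sum_i\binom{\lambda_i}{2}$, and only partitions with $\lambda_1\ge 2$ contribute since otherwise $K(L_\lambda)=0$.

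To estimate $m_{\FF}(L_\lambda)$ from below, observe that each $\sigma(p_{\nu_N^k})$ is, up to sign, labelled by a partition of $\{1,\ldots,N\}$ into $k$ parts of sizes $\nu_N^k$, and its multiplicity along $L_\lambda$ equals $\sum_{i,j}\binom{t_{i,j}}{2}$, where $t_{i,j}$ counts the elements of the $i$-th $\lambda$-block landing in the $j$-th $\nu_N^k$-part. Dropping the column-marginal constraints and applying Lemma~\ref{lemma:quadratic-minimum} row by row yields
\[
m_{\FF}(L_\lambda)\;\ge\;\sum_{i=1}^{l(\lambda)}\mu_{k,\lambda_i}
\;=\;\frac{1}{2k}\Big(\sum_i \lambda_i(\lambda_i-k)+R_k(\lambda)\Big).
\]

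Substituting this bound into the target inequality and clearing the denominator $K(L_\lambda)$, the condition $1/(k+1)<(\frac12\codim(L_\lambda)+m_{\FF}(L_\lambda))/K(L_\lambda)$ reduces to
\[
N-l(\lambda)+\frac{1}{k}\sum_i \lambda_i(\lambda_i-k)+\frac{R_k(\lambda)}{k}
\;>\;\frac{1}{k+1}\sum_i \lambda_i(\lambda_i-1),
\]
which is precisely Lemma~\ref{lemma:type-A-crocodile}, whose hypothesis $\lambda_1>1$ matches exactly the cases with $K(L_\lambda)>0$. The main difficulty is hidden in verifying that the quadratic-minimum bound on $m_{\FF}(L_\lambda)$ is sharp enough to push the ratio strictly above $1/(k+1)$, which is exactly what the combination of Lemmas~\ref{lemma:quadratic-minimum} and~\ref{lemma:type-A-crocodile} is designed to accomplish.
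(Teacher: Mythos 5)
Your proposal is correct and follows essentially the same route as the paper: reduce via Corollary~\ref{corollary:integrable-equal-c} to the inequality $\tfrac{1}{k+1}K(L_\lambda)<\tfrac12\codim(L_\lambda)+m(L_\lambda)$, compute the multiplicity of an orbit element of $p_{\nu}$ along $L_\lambda$ as a sum of binomial coefficients over the block-intersection counts, bound it below by Lemma~\ref{lemma:quadratic-minimum}, and conclude with Lemma~\ref{lemma:type-A-crocodile}. The only cosmetic difference is that you take $\FF$ to be the single orbit of $p_{\nu_N^k}$ and obtain the bound by relaxing the column-marginal constraints, whereas the paper uses the orbits of all $p_\nu$ with $l(\nu)\le k$ so that the unconstrained minimum is attained; both yield the same estimate.
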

\begin{proof}
Assume the notation of Corollary~\ref{corollary:integrable}, and
consider the semi-lattice $\LL$ generated by the hyperplanes
$l_{ij}=x_i-x_j=0$. By Corollary~\ref{corollary:integrable-equal-c}
it is enough to check that
$$\frac{1}{k+1} < \min\limits_{L\in\LL}\frac{\frac12\codim(L)+m(L)}{K(L)},$$
where $m(L)=m_{I_k}(L)$.
By $\Sy_N$-symmetry it suffices to consider the linear
subspaces $L=L_{\lambda}$ given by
\begin{multline*}
x_1=\ldots=x_{\lambda_1}, x_{\lambda_1+1}=\ldots=x_{\lambda_1+\lambda_2},
\ldots,\\
x_{\lambda_1+\ldots+\lambda_{l(\lambda)-1}+1}=\ldots=
x_{\lambda_1+\ldots+\lambda_{l(\lambda)}}
\end{multline*}
for some partition $\lambda=(\lambda_1, \ldots, \lambda_{l(\lambda)})$ of $N$ where $\lambda_1 >1$.
It is easy to see that $\codim(L)=N-l(\lambda)$, and
$$K(L)=\sum\frac{\lambda_i(\lambda_i-1)}{2}.$$
To compute $m(L)$ consider  $\nu=(\nu_1, \ldots, \nu_{k})$ a partition of $N$ and
the corresponding polynomial $p_{\nu}\in I_k$ introduced
in Section~\ref{section:ideals}. A polynomial $\bar{p}_{\nu}$
from the $\Sy_N$-orbit of $p_{\nu}$
gives rise to a presentation of each $\lambda_i$ as
a sum of $k$ nonnegative summands
$$\lambda_i=\lambda_{i, 1}+\ldots+\lambda_{i, k},$$
so that
$$\nu_j=\lambda_{1, j}+\ldots+\lambda_{k, j}.$$
Moreover,
$$\mult_L(\bar{p}_{\nu})=\sum\limits_{i=1}^{l(\lambda)}\sum\limits_{j=1}^{k}
\frac{\lambda_{ij}(\lambda_{ij}-1)}{2}.$$
Recall that by
Proposition~\ref{pr1} the $\Sy_N$-orbits of the polynomials $p_{\nu}$
for various $\nu$ generate the ideal $I_k$.
Hence, in the notation of Lemma~\ref{lemma:quadratic-minimum} one has
$$
m(L)=\min\limits_{\sum_j\lambda_{ij}=\lambda_i}\Big(
\sum\limits_{i=1}^{l(\lambda)}\sum\limits_{j=1}^{k}
\frac{\lambda_{ij}(\lambda_{ij}-1)}{2}\Big)
=\sum\limits_{i=1}^{l(\lambda)}\mu_{k, \lambda_i}.
$$
By Lemma~\ref{lemma:quadratic-minimum}
one has
$$m(L)=\sum\limits_{i=1}^{l(\lambda)}\frac{\lambda_i(\lambda_i-k)}{2k}+
\frac{R_k(\lambda)}{2k},$$
where $R_k(\lambda)$ is defined by $\mref{Rklambda}$. The desired assertion is implied by
Lemma~\ref{lemma:type-A-crocodile}.
\end{proof}

The ideal $I_k$ is an (irreducible) representation of the rational
Cherednik algebra $H_{c}(\Sy_N)$ when $c=1/(k+1)$. Therefore
Proposition~\ref{proposition:A-type} has the following corollary
which was firstly established in \cite{ESG} by different arguments.

\begin{corollary}[{\cite[Theorem 5.14]{ESG}}]
The representation $I_k$ is a unitary representation of the rational
Cherednik algebra $H_{1/(k+1)}(\Sy_N)$.
\end{corollary}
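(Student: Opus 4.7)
The plan is to reduce the corollary to Proposition~\ref{proposition:A-type} via the Gaussian-integral criterion of~\cite[Proposition~4.12]{ESG} recalled in the introduction. The identification $I_k=\SS_c$ for $c=1/(k+1)$, established in~\cite[Theorem~5.10]{ESG} and already invoked after Proposition~\ref{pr1}, makes the invariant Hermitian form $(\cdot,\cdot)_{\tau_c}$ well-defined on $I_k$; what has to be shown is that after an overall sign choice it is positive definite.

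First I would apply Proposition~\ref{proposition:A-type} at the critical value $c=1/(k+1)$ to conclude that $\Phi_g=|g|\prod_{i<j}|x_i-x_j|^{-c}$ is locally $L^2$-integrable on $\R^N$ for every $g\in I_k$. The next step is to promote this local bound to convergence of
$$\gamma_c(g)=\int_{\R^N}\Phi_g(x)^2\,e^{-|x|^2/2}\,dx.$$
Over any fixed ball the local $L^2$-integrability gives finiteness directly by a partition-of-unity argument and compactness. For the complement of a large ball I would pass to spherical coordinates $x=r\omega$: the angular integrand is the restriction of $\Phi_g^2$ to a tubular neighborhood of $S^{N-1}$, which is finite because $S^{N-1}$ is compact and $\Phi_g^2$ is locally integrable; the remaining radial factor is dominated by $r^M e^{-r^2/2}$ for some fixed $M$ depending on $\deg g$ and the number of hyperplanes, and is therefore integrable on $[R,\infty)$.

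Finally I would invoke the identity
$(f,f)_{\tau_c}=\lambda\,\gamma_c\bigl(e^{-\frac12\sum_i\nabla_i^2}f\bigr)$
from~\cite[Proposition~4.12]{ESG}. Because $I_k$ is $H_c(\Sy_N)$-invariant and the Dunkl Laplacian $\sum_i\nabla_i^2$ strictly lowers polynomial degree, the exponential is a finite sum when applied to $f$ and lies in $I_k$, so the convergence just established applies. Combined with the obvious bound $\gamma_c\ge 0$ this shows that $(\cdot,\cdot)_{\tau_c}$ is semidefinite on $I_k$; since it is nondegenerate, it is positive definite after adjusting the sign of $\lambda$, which is exactly unitarity.

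I do not expect any serious obstacle: all the analytic work is concentrated in Proposition~\ref{proposition:A-type}, and the passage from local $L^2$-integrability to convergence of $\gamma_c$ is the standard decoupling of a compact region from a Gaussian tail. The only point requiring a moment of care is the finiteness of the angular integral on $S^{N-1}$, which however follows at once from local $L^2$-integrability at points of the sphere together with its compactness.
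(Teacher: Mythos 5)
Your argument is correct and follows the same route as the paper: the corollary is deduced from Proposition~\ref{proposition:A-type} together with the criterion of \cite[Proposition~4.12]{ESG} recalled in the introduction, the paper leaving implicit the standard passage from local $L^2$-integrability to convergence of the Gaussian integral that you spell out. No gaps.
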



Now we move to the $D_N$ and $B_N$ cases. We are going to establish
local $L^2$-integrability of the relevant functions based on the
polynomials from the ideal $I_k^\pm$. In order to do this we
consider the subspaces from the intersection semi-lattice
$\mathcal{L}$ of the arrangement of hyperplanes of type $D_N$.
Namely, we say that a linear space is {\it of type $(\lambda,z)$}
where $0 \le z \le N$ and $\lambda=(\lambda_1, \ldots,
\lambda_{l(\lambda)})$ is a partition of $N-z$ if the space is a
$D_N$-image of the following linear space:
\begin{multline}\label{lambdaz}
x_1=\ldots=x_{\lambda_1}, x_{\lambda_1+1}=\ldots=x_{\lambda_1+\lambda_2},
\ldots,\\
x_{\lambda_1+\ldots+\lambda_{l(\lambda)-1}+1}=\ldots=
x_{\lambda_1+\ldots+\lambda_{l(\lambda)}},
x_{\lambda_1+\ldots+\lambda_{l(\lambda)} +1 }= \ldots
= x_N =0.
\end{multline}
For a fixed subspace $L$ we will refer to the variables
involved in the last group of equations as \emph{$z$-variables}, and to the
other variables as \emph{$\lambda$-variables}.

Note that any element $L \in \mathcal{L}$ has above
type with $z\ne 1$ except the case when $N$ is even and $z=0$. In
this case $\mathcal{L}$ also contains the spaces {\it of type
$\lambda^-$} given by the $D_N$-images of the linear space
determined by the equations
\begin{multline*}
-x_1=x_2=\ldots=x_{\lambda_1}, x_{\lambda_1+1}=\ldots=x_{\lambda_1+\lambda_2},
\ldots,\\
x_{\lambda_1+\ldots+\lambda_{l(\lambda)-1}+1}=\ldots=x_N,
\end{multline*}
where $\lambda$ is a partition of $N$.

Recall that ideals $K_k$ and $\K_{k, s}$ were defined in the end
of Section~\ref{section:ideals}, and
by Proposition~\ref{proposition:D-representation}
the ideal $K_{2r-1}$ is a representation of the algebra $H_c(D_N)$ for $c=1/(2r)$.

\begin{theorem}
\label{proposition:D-type} The function
$$
\frac{f}{\prod_{i<j}^N (x_i^2-x_j^2)^{\frac{1}{2r}}}
$$
is locally
$L^2$-integrable  for all $f\in K_{2r-1}$ provided that
$r\le \frac{N}2$, $r \in \Z_+$.
\end{theorem}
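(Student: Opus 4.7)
The plan is to apply Corollary~\ref{corollary:integrable-equal-c} to the $D_N$-hyperplane arrangement $\{x_i\pm x_j=0\}_{1\le i<j\le N}$ with $c=1/(2r)$, taking as $\FF$ the $\Sy_N$-orbit of the generator $p_{\nu_N^{2r-1},T_{2r-1,r-1}}$ of $K_{2r-1}$. The task then reduces to verifying, for each subspace $L$ in the intersection semi-lattice $\LL$, the inequality $m(L)+\codim(L)/2>K(L)/(2r)$, where $m(L)$ is the minimum of $\mult_L$ over the generators. By $D_N$-equivariance I can restrict attention to subspaces of type $(\lambda,z)$ with $z\ne 1$ and (for $N$ even) to subspaces of type $\lambda^-$; a direct count yields $\codim(L)=N-l(\lambda)$ in both cases, with $K(L)=\sum_i\binom{\lambda_i}{2}+z(z-1)$ in the first case and $K(L)=\sum_i\binom{\lambda_i}{2}$ in the second.

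The crucial step is the multiplicity computation. For a fixed $\sigma\in\Sy_N$, I would let $\lambda_{i,j}$ and $z_j$ record how $\sigma$ distributes the $\lambda$-variables of group $i$ and the $z$-variables, respectively, among the $\nu$-blocks. Expanding $p_\nu(x_1^2,\ldots,x_N^2)=\prod(x_a-x_b)(x_a+x_b)$ with $(a,b)$ running over intra-block pairs, and noting that the extra factors $\prod_{j\in T}\prod_i x_{\cdots}$ vanish along $L$ only on $z$-variables, a pair-by-pair analysis yields
\begin{equation*}
\mult_L\bigl(\sigma\cdot p_{\nu_N^{2r-1},T}\bigr)=\sum_j z_j(z_j-1)+\sum_{i,j}\binom{\lambda_{i,j}}{2}+\sum_{j\in T}z_j.
\end{equation*}
The key observation is that each of the three pieces admits a pointwise lower bound independent of~$\sigma$: Lemma~\ref{lemma:quadratic-minimum} controls the $\lambda$-piece by $\sum_i\mu_{2r-1,\lambda_i}$, and the combined $z$- and $T$-pieces match exactly the function $\tilde C$ of Lemma~\ref{lemma:krivoj-krokodil-ab} with $a=|T|=r-1$ marked coordinates and $b=(2r-1)-|T|=r$ unmarked ones, yielding $\tilde\mu_{r-1,r,z}\ge z(z-r)/(2r-1)$. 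Summing gives $m(L)\ge\sum_i\mu_{2r-1,\lambda_i}+z(z-r)/(2r-1)$.

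To conclude, I would split $(N-l(\lambda))/2=(N-z-l(\lambda))/2+z/2$. The $\lambda$-contribution is then handled by Lemma~\ref{lemma:type-A-crocodile} applied to $\lambda$ as a partition of $N-z$ (valid when $\lambda_1>1$), and the remaining $z$-contribution reduces to the elementary identity
\begin{equation*}
\frac{1}{2}+\frac{z-r}{2r-1}-\frac{z-1}{2r}=\frac{z+r-1}{2r(2r-1)},
\end{equation*}
which is strictly positive for $z\ge 1$; since the lattice excludes $z=1$, the relevant range is $z\ge 2$ and strictness holds. The corner case $\lambda_1\le 1$ makes the $\lambda$-piece vanish automatically and the required inequality collapses to the $z$-identity; for type $\lambda^-$ one has $z=0$ with $\lambda_1\ge 2$, so the whole problem collapses back to Lemma~\ref{lemma:type-A-crocodile}. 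The main obstacle is the pointwise multiplicity estimate itself, together with the recognition that the specific split $(a,b)=(r-1,r)$ in Lemma~\ref{lemma:krivoj-krokodil-ab}, dictated by the cardinality of $T$, is what is needed to couple the $T$-factor marks with the pair-match contribution of the $z$-variables.
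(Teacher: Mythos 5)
Your proposal is correct, and its skeleton coincides with the paper's: reduce to Corollary~\ref{corollary:integrable-equal-c}, classify the subspaces of the $D_N$ intersection lattice by type $(\lambda,z)$ or $\lambda^-$, compute $\mult_L$ of a generator as a sum of a $\lambda$-piece, a $z$-pair piece and a $T$-marking piece, and bound the pieces by Lemmas~\ref{lemma:quadratic-minimum} and~\ref{lemma:krivoj-krokodil-ab}. The one genuine (if minor) divergence is in how the case $z>0$ is closed: the paper enlarges the ideal along $K_{2r-1}\subset\K_{2r-1,r-1}\subset\K_{2r,r}$ precisely so that Lemma~\ref{lemma:krivoj-krokodil-ab} applies with the symmetric split $a=b=r$, giving $\tilde\mu_{r,r,z}\ge z^2/(2r)-z/2$, after which Lemma~\ref{lemma:type-B-crocodile} with $k=2r$ finishes in one stroke; you instead stay with the actual generators ($2r-1$ blocks, $|T|=r-1$), apply the lemma with $(a,b)=(r-1,r)$ to get $\tilde\mu_{r-1,r,z}\ge z(z-r)/(2r-1)$, and then compensate for the mismatch between denominators $2r-1$ and $2r$ via Lemma~\ref{lemma:type-A-crocodile} at $k=2r-1$ together with the explicit identity $\tfrac12+\tfrac{z-r}{2r-1}-\tfrac{z-1}{2r}=\tfrac{z+r-1}{2r(2r-1)}>0$. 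Both routes are valid; yours avoids introducing the auxiliary ideal $\K_{2r,r}$ at the cost of one extra elementary computation, and your treatment of the degenerate cases ($\lambda_1\le 1$ with $z\ge 2$, and $z=0$ or type $\lambda^-$ with $\lambda_1\ge 2$) correctly isolates where strictness comes from.
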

\begin{proof}
Assume the notations of Corollary~\ref{corollary:integrable}, and
consider the semi-lattice $\LL$ generated by the hyperplanes
$l_{ij}=x_i-x_j=0$ and $l_{ij}'=x_i+x_j=0$.
By Corollary~\ref{corollary:integrable-equal-c}
it is enough to check that
\beq{corD2r}\frac{1}{2r}<\min\limits_{L\in\LL}\frac{\frac12 \codim(L)+m(L)}{K(L)},
\eeq
where $m(L)=m_{K_{2r-1}}(L)$

Choose a subspace $L\in\LL$ of type $(\lambda, z)$
where $0\le z\le N$, $z\neq 1$, and
$\lambda=(\lambda_1, \ldots, \lambda_{l(\lambda)})$ is a partition of $N-z$.
It is easy to see that $\codim(L)=N-l(\lambda)$ and
$$K(L)=\sum\frac{\lambda_i(\lambda_i-1)}{2}+z(z-1).$$

Since $K_{2r-1}\subset \K_{2r-1, r-1}\subset\K_{2r, r}$,
one has $$m(L)\ge m_{\K_{2r-1, r-1}}(L)\ge m_{\K_{2r,r}}(L).$$

Assume first that $z>0$. Let us estimate the value of
$m_{\K_{2r, r}}(L)$. Consider a partition
$\nu=(\nu_1, \ldots, \nu_{2r})$, a set
$$T=\{\tau_1, \ldots, \tau_r\}\subset\{1, \ldots, 2r\}$$
and
the corresponding polynomial $p_{\nu, T}\in\K_{2r, r}$ introduced
in Section~\ref{section:ideals}. A polynomial $\bar{p}_{\nu, T}$
from the $\Sy_N$-orbit of $p_{\nu, T}$
gives rise to a presentation of each $\lambda_i$ and $z$ as
a sum of $2r$ nonnegative summands
$$\lambda_i=\lambda_{i, 1}+\ldots+\lambda_{i, 2r},\quad
z=\zeta_1+\ldots+\zeta_{2r}$$
so that
$$\nu_j=\lambda_{1, j}+\ldots+\lambda_{l(\lambda), j}+\zeta_j.$$
Moreover,
$$\mult_L(\bar{p}_{\nu})=\sum\limits_{i=1}^{l(\lambda)}\sum\limits_{j=1}^{2r}
\frac{\lambda_{ij}(\lambda_{ij}-1)}{2}
+\sum\limits_{j=1}^{2r}\zeta_j(\zeta_j-1)+
\sum\limits_{\tau\in T}\zeta_{\tau}.$$
Hence, in the notation of Lemma~\ref{lemma:quadratic-minimum}
and Lemma~\ref{lemma:krivoj-krokodil-ab} one has
\begin{multline*}
m_{\K_{2r, r}}(L)=\\
=\sum\limits_{i=1}^{l(\lambda)}
\Big(\min\limits_{\sum_j\lambda_{ij}=\lambda_i}
\sum\limits_{j=1}^{2r}
\frac{\lambda_{ij}(\lambda_{ij}-1)}{2}\Big)+
\min\limits_{\sum_j\zeta_j=z}\Big(\sum_{j=1}^{2r}\zeta_j(\zeta_j-1)+
\sum\limits_{\tau\in T}\zeta_{\tau}\Big)=\\
=\sum\limits_{i=1}^{l(\lambda)}\mu_{2r, \lambda_i}+\tilde{\mu}_{r, r, z}.
\end{multline*}
By Lemmas~\ref{lemma:quadratic-minimum} and~\ref{lemma:krivoj-krokodil-ab}
applied for $a=b=r$
one has
$$m(L)\ge
\sum_{i=1}^{l(\lambda)}\frac{\lambda_i(\lambda_i-2r)}{4r}+
\frac{R_{2r}(\lambda)}{4r}+ \frac{z^2}{2r}-\frac{z}{2}.$$
By
Lemma~\ref{lemma:type-B-crocodile} applied for $k=2r$ 
one has
\begin{multline*}
\sum_{i=1}^{l(\lambda)}\frac{\lambda_i(\lambda_i-2r)}{4r}+
\frac{R_{2r}(\lambda)}{4r}+ \frac{z^2}{2r}-\frac{z}{2}
+\frac{N-l(\lambda)}{2}\ge\\
\ge \sum_{i=1}^{l(\lambda)}\frac{\lambda_i(\lambda_i-1)}{4r}+\frac{z^2}{2r}>\\
>\sum_{i=1}^{l(\lambda)}\frac{\lambda_i(\lambda_i-1)}{4r}+\frac{z(z-1)}{2r} = \frac{K(L)}{2r}
\end{multline*}
so \mref{corD2r} follows.

Now assume that $z=0$ and estimate $m_{\K_{2r-1,r-1}}$.
Arguing as in the proof of Proposition~\ref{proposition:A-type}, one
obtaines
\begin{multline*}
m(L)\ge m_{\K_{2r-1,r-1}}(L)=m_{I^{\pm}_{2r-1}}(L)=m_{I_{2r-1}}(L)=
\sum_{i=1}^{l(\lambda)}\mu_{2r-1,\lambda_i}=
\\
=\sum_{i=1}^{l(\lambda)}\frac{\lambda_i(\lambda_i-(2r-1))}{2(2r-1)}
+\frac{R_{2r-1}(\lambda)}{2(2r-1)}.
\end{multline*}
Thus the assertion in this case is implied by
Lemma~\ref{lemma:type-A-crocodile} applied for $k=2r-1$.

Finally, choose a subspace $L\in\LL$ of type $\lambda^{-}$. It is easy to
see that the values of $\codim(L)$, $K(L)$ and $m(L)$ are the same
as for a subspace of type $(\lambda, 0)$, which completes the proof.
\end{proof}

Now we consider singular values $c=1/(2r)$ with $r>N/2$.
We need to use ideals $J_r$ from Section \ref{section:ideals}.

\begin{theorem}
\label{proposition:D-type-second} The function
$$
\frac{f}{\prod_{i<j}^N (x_i^2-x_j^2)^{\frac{1}{2r}}}
$$
is locally
$L^2$-integrable  for all $f\in J_{r}$ provided that
$N> r > \frac{N}2$, $r \in \Z_+$.
\end{theorem}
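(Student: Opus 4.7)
The plan is to invoke Corollary~\ref{corollary:integrable-equal-c} with $c = 1/(2r)$ and the finite set $\FF$ of $\Sy_N$-images of the monomial $x_1\cdots x_{N-r}$, which generate $J_r$ by Proposition~\ref{Jgen}. The task then reduces to verifying
\begin{equation*}
K(L) < r\cdot\codim(L) + 2r\cdot m(L)
\end{equation*}
for every nontrivial subspace $L$ in the intersection semi-lattice $\LL$ of the $D_N$-arrangement, where $m(L)=m_{\FF}(L)$.

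I would classify the elements of $\LL$ exactly as in the proof of Theorem~\ref{proposition:D-type}: either type $(\lambda, z)$ with $z\ne 1$, or, when $N$ is even, type $\lambda^-$. In both cases $\codim(L)=N-l(\lambda)$ and $K(L)=\sum_i \lambda_i(\lambda_i-1)/2+z(z-1)$. The new ingredient here is the multiplicity computation: since each generator $\sigma(x_1\cdots x_{N-r})$ vanishes on $L$ to the order equal to the number of $z$-indices among $\sigma(\{1,\ldots,N-r\})$, minimization over $\sigma\in \Sy_N$ gives
\begin{equation*}
m(L)=\max(0,\, z-r),
\end{equation*}
unaffected by the $D_N$-twist since $x_i$ and $-x_i$ vanish to the same order.

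The verification then splits into two cases. If $z\le r$, then $m(L)=0$ and the target inequality reads
\begin{equation*}
\sum \lambda_i(\lambda_i-1)/2 + z(z-1) < r\sum(\lambda_i-1) + rz.
\end{equation*}
The estimate $\lambda_i\le N-z\le N<2r$ (using $r>N/2$) rewrites each $\lambda$-summand on the left as $(\lambda_i-1)(\lambda_i-2r)/2\le 0$, which is strict whenever $\lambda_1\ge 2$; analogously $z(z-1-r)\le 0$, strict whenever $z\ge 1$. Together these yield a strict overall inequality for every $L\ne \R^N$, recalling that $z=1$ is excluded from $\LL$. If $z>r$, then $r+1\le z\le N<2r$, and an elementary computation gives $z(z-1)<3rz-2r^2=rz+2r(z-r)$; combining this with the same $\lambda$-estimate produces the required strict inequality $K(L)<r\cdot\codim(L)+2r(z-r)$.

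Finally, the type $\lambda^-$ case has $z=0$ and the same values of $\codim(L)$, $K(L)$, $m(L)$ as type $(\lambda,0)$, so it reduces to the $z=0$ subcase above where nontriviality forces $\lambda_1\ge 2$. I do not foresee a serious obstacle: the hypothesis $r>N/2$ forces $\lambda_i<2r$ and $z<2r$ uniformly, which is exactly what is needed to push through both the quadratic $\lambda$-estimate and the new quadratic-in-$z$ estimate. The only mild subtlety is verifying that $z(z-1)<3rz-2r^2$ throughout the admissible range $r<z\le N<2r$, which is a direct check since the parabola $z^2-(3r+1)z+2r^2$ is negative at $z=r+1$ and at $z=2r-1$ and hence throughout that integer interval.
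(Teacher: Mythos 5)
Your proof is correct and follows essentially the same route as the paper: the same reduction via Corollary~\ref{corollary:integrable-equal-c}, the same classification of subspaces of type $(\lambda,z)$ and $\lambda^-$, the same multiplicity $m(L)=\max(0,z-r)$, and the same target inequality \mref{eq:D-J} (up to clearing denominators). The only difference is in the endgame: where the paper invokes Lemma~\ref{lemma:type-B-crocodile} with $k=2r$ together with \mref{eq:smalllemma} and a separate partition-maximization for $z=0$, you verify the inequality directly from $\lambda_i(\lambda_i-1)/2\le r(\lambda_i-1)$ and $z(z-1)<rz+2r\max(0,z-r)$, both exploiting $\lambda_i,z\le N<2r$ — an equally valid and somewhat more self-contained computation.
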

\begin{proof}
Let $L$ be a subspace of type $(\lambda,z)$ or $\lambda^{-}$. Note
that the multiplicity $m(L)=\max(0, z-r)$. We need to establish that
\begin{equation}\label{eq:D-J}
\frac{z(z-1)}{2r}+\sum_{i=1}^{l(\lambda)}
\frac{\lambda_i(\lambda_i-1)}{4r} < \frac{N-l(\lambda)}{2}+
\max(0,z-r).
\end{equation}

Assume that $z>0$. Then
\begin{equation}\label{eq:smalllemma}
\frac{z(z-1)}{2r}<\frac{z}{2}+\max(0,z-r),
\end{equation}
which can
be easily seen by considering the cases $2r>z\ge r$ and $z<r$.
Moreover, applying Lemma~\ref{lemma:type-B-crocodile} with $k=2r$ one obtains
\begin{equation}\label{eq:B-croco-for-D}
\sum\limits_{i=1}^{l(\lambda)}\frac{\lambda_i(\lambda_i-1)}{4r}+\frac{z}{2}\le
\frac{N-l(\lambda)}{2} \end{equation} since the first two summands
of the left hand side of~\mref{eq:B-croco} make $0$ for $k>N$.
Adding up~\mref{eq:smalllemma} and~\mref{eq:B-croco-for-D} one
obtains~\mref{eq:D-J}.

Now assume that $z=0$. Then~\mref{eq:D-J} becomes
$$\sum_{i=1}^{l(\lambda)}
\frac{\lambda_i(\lambda_i-1)}{4r}<\frac{N-l(\lambda)}{2}.$$ Since
$2r>N$ it is enough to check that
\begin{equation}\label{eq:max-lambda}
\sum_{i=1}^{l(\lambda)}\lambda_i^2 \le N^2-Nl(\lambda)+N. \end{equation} The
maximum of the left hand side of~\mref{eq:max-lambda} is obtained
for
$$\lambda=(N-l(\lambda)+1, 1,\ldots, 1).$$
Thus~\mref{eq:max-lambda} holds since
$$N^2-Nl(\lambda)+N -
(N-l(\lambda)+1)^2-(l(\lambda)-1)=(N-l(\lambda))(l(\lambda)-1)\ge 0.$$
\end{proof}

Now we move to the case
of the poles supported on the $B_N$ semi-lattice. Consider the ideal
$K_{r-1,r-s-1}$ as a representation of the rational Cherednik algebra
$H_c(B_N)$ where the multiplicity function $c(e_i \pm e_j)=1/r$
and $c(e_i)=\frac12-\frac{s}{r}$
(see Proposition~\ref{proposition:D-representation}).
Any element from the corresponding intersection
semi-lattice ${\mathcal{L}}(B_N)$ is the image of the space of the
form \mref{lambdaz} under an element of the group $B_N$. We say that
these spaces have type $(\lambda,z)$ where $z=0,1, \ldots, N$ and
$\lambda$ is a partition of $N-z$.

\begin{theorem}
\label{proposition:B-type}
The function
$$g=\frac{f}{\prod_{i<j}^N (x_i^2-x_j^2)^{\frac{1}{r}} \prod_{i=1}^N x_i^{\frac12-\frac{s}{r}}}$$
is locally $L^2$-integrable for any $f\in K_{r-1, r-s-1}$
provided that
$2 \le r \le N$, $0\le s\le r-1$, $r, s\in \Z$.
\end{theorem}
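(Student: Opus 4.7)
The argument will follow the pattern of Theorems~\ref{proposition:D-type} and~\ref{proposition:D-type-second}. I plan to apply Corollary~\ref{corollary:integrable} to the arrangement of type $B_N$, consisting of the hyperplanes $x_i\pm x_j=0$ with coefficient $1/r$ and $x_i=0$ with coefficient $\frac{1}{2}-\frac{s}{r}$. By $B_N$-symmetry every nontrivial $L$ in the intersection semilattice is of type $(\lambda,z)$ in the sense of~\mref{lambdaz}, where $0\le z\le N$ and $\lambda$ is a partition of $N-z$. Counting the hyperplanes containing such an $L$ I find
$$
\codim(L)=N-l(\lambda),\qquad
\kappa(L)=\sum_i\frac{\lambda_i(\lambda_i-1)}{2r}+\frac{z(z-1)}{r}+\Big(\frac{1}{2}-\frac{s}{r}\Big)z.
$$

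To lower bound $m(L)$ I would use the inclusion $K_{r-1,r-s-1}\subset\K_{r-1,r-s-1}$. Repeating the multiplicity calculation for $p_{\nu,T}$ from the proof of Theorem~\ref{proposition:D-type} with $l(\nu)=r-1$ and $|T|=r-s-1$, and minimizing over the $\Sy_N$-action (which amounts to distributing the $\lambda$- and $z$-variables freely among the $l(\nu)$ blocks), Lemmas~\ref{lemma:quadratic-minimum} and~\ref{lemma:krivoj-krokodil-ab} applied with $a=r-s-1$, $b=s$ give
$$
m(L)\ge\sum_i\mu_{r-1,\lambda_i}+\tilde{\mu}_{r-s-1,\,s,\,z}.
$$

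The inequality $\kappa(L)<\codim(L)/2+m(L)$ would then come from combining two estimates. For the $\lambda$-part I would invoke Lemma~\ref{lemma:type-A-crocodile} with $k=r-1$, applied to the partition $\lambda$ of $N-z$ and halved; this yields
$$
\sum_i\mu_{r-1,\lambda_i}+\frac{N-z-l(\lambda)}{2}\ge\sum_i\frac{\lambda_i(\lambda_i-1)}{2r},
$$
strict when $\lambda_1>1$. For the $z$-part Lemma~\ref{lemma:krivoj-krokodil-ab} gives
$$
\tilde{\mu}_{r-s-1,\,s,\,z}\ge\frac{z(z-s)}{r-1}\ge\frac{z(z-s-1)}{r},
$$
where the last inequality reduces to $z+r>s+1$ and is therefore strict whenever $z\ge 1$ (since $s\le r-1$). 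Summing both bounds and using $\frac{N-l(\lambda)}{2}=\frac{N-z-l(\lambda)}{2}+\frac{z}{2}$ produces the required strict inequality.

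The main subtlety is maintaining strictness of the combined inequality in every degenerate case. When $\lambda_1>1$ the Lemma~\ref{lemma:type-A-crocodile} estimate already contributes a strict gap. When $\lambda$ consists only of $1$'s or is empty the Lemma~\ref{lemma:type-A-crocodile} estimate degenerates to an equality, but then necessarily $z\ge 1$ and strictness is supplied by $\frac{z(z-s)}{r-1}>\frac{z(z-s-1)}{r}$. The remaining configuration $\lambda$ trivial with $z=0$ corresponds to $L=\R^N$, which is not in the intersection semilattice and may be ignored.
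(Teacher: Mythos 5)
Your proposal is correct and follows essentially the same route as the paper: reduction via Corollary~\ref{corollary:integrable}, the identical computation of $\codim(L)$, $\kappa(L)$ and the lower bound $m(L)\ge\sum_i\mu_{r-1,\lambda_i}+\tilde\mu_{r-s-1,s,z}$ through $\K_{r-1,r-s-1}$ and Lemmas~\ref{lemma:quadratic-minimum}, \ref{lemma:krivoj-krokodil-ab}. The only (cosmetic) difference is in the last step, where you split the inequality into a $\lambda$-part handled by Lemma~\ref{lemma:type-A-crocodile} and an elementary $z$-part estimate, while the paper applies Lemma~\ref{lemma:type-B-crocodile} to the combined expression and then passes from denominator $r-1$ to $r$ in one chain; your tracking of where strictness comes from in the degenerate cases matches the paper's case split between $z>0$ and $z=0$.
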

\begin{proof}
By Corollary~\ref{corollary:integrable}
it is sufficient to establish that
\begin{equation}\label{kappabn}
\kappa(L)<\frac12 \codim(L)+m(L)
\end{equation}
where $L$ is an arbitrary subspace from the intersection
semi-lattice ${\mathcal{L}}(B_N)$ and $m(L)=m_{K_{r-1,r-s-1}}(L)$.
Choose a subspace $L\in\LL$ of type $(\lambda, z)$.
It is easy to see that $\codim(L)=N-l(\lambda)$ and
$$\kappa(L)=\sum\frac{\lambda_i(\lambda_i-1)}{2r}+\frac{z(z-1)}{r}+
\big(\frac{1}{2}-\frac{s}{r}\big)z.$$

Since $K_{r-1, r-s-1}\subset \K_{r-1, r-s-1}$, one has
$m(L)\ge m_{\K_{r-1, r-s-1}}(L)$.

Assume that $z>0$. Let us estimate the value of
$m_{\K_{r-1, r-s-1}}(L)$. Consider a partition
$\nu=(\nu_1, \ldots, \nu_{r-1})$, a set
$$T=\{\tau_1, \ldots, \tau_{r-s-1}\}\subset\{1, \ldots, r-1\}$$
and
the corresponding polynomial $p_{\nu, T}\in\K_{r-1, r-s-1}$ introduced
in Section~\ref{section:ideals}. A polynomial $\bar{p}_{\nu, T}$
from the $\Sy_N$-orbit of $p_{\nu, T}$
gives rise to a presentation of each $\lambda_i$ and $z$ as
a sum of $r-1$ nonnegative summands
$$\lambda_i=\lambda_{i, 1}+\ldots+\lambda_{i, r-1},\quad
z=\zeta_1+\ldots+\zeta_{r-1}$$
so that
$$\nu_j=\lambda_{1, j}+\ldots+\lambda_{l(\lambda), j}+\zeta_j.$$
Moreover,
$$\mult_L(\bar{p}_{\nu})=\sum\limits_{i=1}^{l(\lambda)}\sum\limits_{j=1}^{r-1}
\frac{\lambda_{ij}(\lambda_{ij}-1)}{2}
+\sum\limits_{j=1}^{r-1}\zeta_j(\zeta_j-1)+
\sum\limits_{\tau\in T}\zeta_{\tau}.$$
Hence, in the notation of Lemma~\ref{lemma:quadratic-minimum}
and Lemma~\ref{lemma:krivoj-krokodil-ab}
one has
\begin{multline*}
m_{\K_{r-1, r-s-1}}(L)=\\
=\sum\limits_{i=1}^{l(\lambda)}
\Big(\min\limits_{\sum_j\lambda_{ij}=\lambda_i}
\sum\limits_{j=1}^{r-1}
\frac{\lambda_{ij}(\lambda_{ij}-1)}{2}\Big)+
\min\limits_{\sum_j\zeta_j=z}\Big(\sum_{j=1}^{r-1}\zeta_j(\zeta_j-1)+
\sum\limits_{\tau\in T}\zeta_{\tau}\Big)=\\
=\sum\limits_{i=1}^{l(\lambda)}\mu_{r-1, \lambda_i}+
\tilde{\mu}_{r-s-1, s, z}.
\end{multline*}
By Lemmas~\ref{lemma:quadratic-minimum} and~\ref{lemma:krivoj-krokodil-ab}
applied for $a=r-s-1$ and $b=s$
one has
$$m(L)\ge
\sum_{i=1}^{l(\lambda)}\frac{\lambda_i(\lambda_i-(r-1))}{2(r-1)}+
\frac{R_{r-1}(\lambda)}{2(r-1)}+ \frac{z^2}{r-1}-\frac{s}{r-1}z.$$
By Lemma~\ref{lemma:type-B-crocodile} applied for $k=r-1$ one has
\begin{multline*}
m(L)+\frac{1}{2}\codim(L)\ge\\
\ge
\sum\frac{\lambda_i(\lambda_i-1)}{2(r-1)}+\frac{z^2}{r-1}+
\big(\frac{1}{2}-\frac{s}{r-1}\big)z>\\
>\sum\frac{\lambda_i(\lambda_i-1)}{2(r-1)}+\frac{z^2}{r}+
\big(\frac{1}{2}-\frac{s+1}{r}\big)z\ge\\
\ge \sum\frac{\lambda_i(\lambda_i-1)}{2r}+\frac{z^2}{r}+
\big(\frac{1}{2}-\frac{s+1}{r}\big)z=\\
=\sum\frac{\lambda_i(\lambda_i-1)}{2r}+\frac{z(z-1)}{r}+
\big(\frac{1}{2}-\frac{s}{r}\big)z=\kappa(L)
\end{multline*}
as required.

Now assume that $z=0$ and estimate $m_{\K_{r-1,r-s-1}}$.
Arguing as in the proof of Proposition~\ref{proposition:A-type}, one
obtaines
\begin{multline*}
m(L)\ge m_{\K_{r-1,r-s-1}}(L)=m_{I^{\pm}_{r-1}}(L)=m_{I_{r-1}}(L)=
\sum_{i=1}^{l(\lambda)}\mu_{r-1,\lambda_i}=
\\
=\sum_{i=1}^{l(\lambda)}\frac{\lambda_i(\lambda_i-(r-1))}{2(r-1)}
+\frac{R_{r-1}(\lambda)}{2(r-1)}.
\end{multline*}
Thus the assertion in this case is implied by
Lemma~\ref{lemma:type-A-crocodile} applied for $k=r-1$.

Finally, choose a subspace $L\in\LL$ of type $\lambda^{-}$. It is easy to
see that the values of $\codim(L)$, $\kappa(L)$ and $m(L)$ are the same
as for a subspace of type $(\lambda, 0)$, which completes the proof.
\end{proof}

The second statement in type $B$ is about ideals $I_r^{\pm}$ and $J_s$ (see Section~\ref{section:ideals}).
\begin{theorem}
\label{proposition:B-type-second}
The function
$$g=\frac{f}{\prod_{i<j}^N (x_i^2-x_j^2)^{\frac{1}{r}} \prod_{i=1}^N x_i^{\frac12-\frac{s}{r}}}$$
is locally $L^2$-integrable
provided that
one of the following sets of conditions holds:
\begin{enumerate}
\item[(i)]
$f\in I_{r-1}^{\pm}$, $2 \le r \le N$, $r\in \Z$,  $s \ge r-1$, $s \in\R$;
\item[(ii)]
$f \in J_s$,  $0\le s \le N-1$, $s \in \Z$, $r\ge N+1$, $r \in \R$;
\item[(iii)]
$f \in \C[x]$, $s>N-1$, $s \in \R$, $r\ge N+1$, $r \in \R$;
\item[(iv)]
$f \in \C[x]$, $r<0, s<0, r,s \in \R$.
\end{enumerate}


\end{theorem}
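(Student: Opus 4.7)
The plan is to apply Corollary~\ref{corollary:integrable} to the $B_N$ hyperplane arrangement in cases~(ii)--(iv), with the weights $c(x_i\pm x_j)=1/r$ and $c(x_i)=\frac12-\frac{s}{r}$, and to deduce case~(i) from Theorem~\ref{proposition:B-type} by a direct pointwise domination of integrands. Although the corollary is stated for rational weights, its proof carries through verbatim for real weights since Theorem~\ref{theorem:klt-integrable} only needs the real-valued condition $e_i-cm_i>-1$. Any element $L$ of the $B_N$ intersection semi-lattice has type $(\lambda,z)$ with $0\le z\le N$ and $\lambda$ a partition of $N-z$, giving $\codim(L)=N-l(\lambda)$ and
\[
\kappa(L)=\sum_{i=1}^{l(\lambda)}\frac{\lambda_i(\lambda_i-1)}{2r}+\frac{z(z-1)}{r}+\Big(\frac12-\frac{s}{r}\Big)z.
\]
When $r>0$ I will multiply the target inequality $\kappa(L)<\frac12\codim(L)+m(L)$ by $2r$ and use $N-l(\lambda)-z=\sum_i(\lambda_i-1)$ to rewrite it in the convenient form
\[
\sum_{i=1}^{l(\lambda)}(\lambda_i-1)(\lambda_i-r)+2z(z-s-1)<2r\cdot m(L).
\]

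For case~(iv), $r<0$ makes the first two summands of $\kappa(L)$ non-positive, and since $s,r<0$ gives $s/r>0$, the third summand is strictly bounded above by $z/2\le\codim(L)/2$; when $z=0$, a proper $L$ forces some $\lambda_i\ge 2$, so the first sum is strictly negative. For case~(iii), $m(L)=0$ and the inequalities $\lambda_i\le N<r$ and $z\le N<s+1$ make both summands of the reformulated left-hand side non-positive, with strictness automatic on a proper $L$ since either some $\lambda_i\ge 2$ or $z\ge 1$. Case~(ii) requires the additional input that $m_{J_s}(L)=\max(0,z-s)$, which follows from Proposition~\ref{Jgen} exactly as in Theorem~\ref{proposition:D-type-second}; the subcase $z\le s$ is handled by the very argument of case~(iii), while the subcase $z>s$ reduces, after discarding the non-positive $\lambda$-sum, to the elementary estimate $(z-s)(z-r)<z$, which is immediate from $r>z$ and $z-s\ge 1$.

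For case~(i), $s\ge r-1$ gives $s-r+1\ge 0$, and a direct computation yields
\[
|g_s(x)|^2=|g_{r-1}(x)|^2\prod_{i=1}^N|x_i|^{2(s-r+1)/r},
\]
where the multiplicative factor has non-negative exponents and is therefore locally bounded on $\R^N$. It is thus enough to establish $L^2$-integrability of $g_{r-1}$ for the integer parameter $s=r-1$. At this value one has $K_{r-1,0}=I_{r-1}^{\pm}$: indeed $T_{r-1,0}=\emptyset$, so $K_{r-1,0}$ is generated by $\Sy_N$-images of $p_{\nu_N^{r-1}}(x_1^2,\ldots,x_N^2)$, which by Proposition~\ref{genidi} are precisely the generators of $I_{r-1}^{\pm}$. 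Hence $g_{r-1}$ with $f\in I_{r-1}^{\pm}$ is locally $L^2$-integrable by Theorem~\ref{proposition:B-type} applied at $s=r-1$, and the general case $s\ge r-1$ follows. The principal obstacle I expect is this identification $K_{r-1,0}=I_{r-1}^{\pm}$ together with the need to justify the extension of Corollary~\ref{corollary:integrable} to real weights; the rest is routine estimation in the framework set up in Sections~\ref{section:L2} and~\ref{section:brute-force}.
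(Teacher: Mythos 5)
Your proof is correct, but it takes a genuinely different route from the paper's in two of the four cases. For case (i) the paper works directly on the intersection lattice: it writes out the multiplicity $m_{I_{r-1}^{\pm}}(L)$ explicitly via Lemma~\ref{lemma:quadratic-minimum} and closes the estimate with Lemma~\ref{lemma:type-B-crocodile-old} (for $k=r-1$) together with Remark~\ref{remarkafterlem}, which is needed precisely because the $\max(0,\lfloor z-s\rfloor)$ term is absent when $s\ge r-1$. You instead observe that $|g_s|^2=|g_{r-1}|^2\prod|x_i|^{2(s-r+1)/r}$ with nonnegative exponents, identify $K_{r-1,0}=I_{r-1}^{\pm}$ via Proposition~\ref{genidi}, and quote Theorem~\ref{proposition:B-type} at the endpoint $s=r-1$; this turns case (i) into a one-line corollary of the preceding theorem and avoids redoing the lattice computation, at the cost of depending on the boundary case of that theorem. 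For cases (ii)--(iii) the paper again routes through Lemma~\ref{lemma:type-B-crocodile-old} at $r=N+1$ and then extends to $r>N+1$ by a positivity/monotonicity remark, whereas your reformulation $\sum(\lambda_i-1)(\lambda_i-r)+2z(z-s-1)<2r\,m(L)$ (using $N-l(\lambda)=z+\sum(\lambda_i-1)$) lets you verify everything by inspection, with the only nontrivial subcase reduced to $(z-s)(z-r)<z$; this is more elementary and self-contained. Your treatment of case (iv) matches the paper's (which simply calls it obvious). Finally, you are right, and more careful than the paper, to flag that Corollary~\ref{corollary:integrable} is stated for rational weights while cases (ii)--(iv) involve real $r,s$; the justification you give (the resolution argument in Theorem~\ref{theorem:klt-integrable} only uses the real inequality $e_i-cm_i>-1$) is the correct one.
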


\begin{proof}
By Corollary~\ref{corollary:integrable}
it is sufficient to establish that
\begin{equation}\label{kappabn2}
\kappa(L)<\frac12 \codim(L)+m(L)
\end{equation}
where $L$ is an arbitrary subspace from the intersection
semi-lattice ${\mathcal{L}}(B_N)$, that is $L$ has type $(\lambda, z)$, and $m(L)=m_{I_{r-1}^\pm}(L)$.
Recall that $\codim(L)=N-l(\lambda)$.

Let $2 \le r \le N$, $r \in \Z$ and let $s>r-1$.
We know that the
multiplicity $m(L)$ for $f \in I_{r-1}^\pm$ is given by
\begin{multline}\label{mLB}
m(L)=\sum\limits_{i=1}^{l(\lambda)}\frac{\lambda_i(\lambda_i-(r-1))}{2(r-1)}+
\frac{R_{r-1}(\lambda)}{2(r-1)}+\\ +\frac{z(z-r+1)}{r-1}+
\frac{\rho_{r-1}(z)(r-1-\rho_{r-1}(z))}{r-1}.
\end{multline}
For the multiplicity of the denominator we have
$$
\kappa(L)= \sum_{i=1}^{l(\lambda)}
\frac{\lambda_i(\lambda_i-1)}{2r}+ \frac{z(z-1)}{r}+ \frac{r-2s}{2r}
z.
$$
Thus the inequality \mref{kappabn2} follows by Lemma
\ref{lemma:type-B-crocodile-old} applied for $k=r-1$ and by Remark
\ref{remarkafterlem}, which completes the proof in case $(i)$.

Recall that for the ideal $J_s$, $s\in\Z_{+}$, one has
$m(L)=\max(0,\lfloor z-s\rfloor)$. Taking $r=N+1$ we obtain
$\kappa(L) < \frac12 (N-l(\lambda)) + \max(0,\lfloor z-s\rfloor)$ by
Lemma \ref{lemma:type-B-crocodile-old} applied for $k=N$. Therefore \beq{prop57proof}
\sum_{i=1}^{l(\lambda)} \frac{\lambda_i(\lambda_i-1)}{2r}+
\frac{z^2}{r} - \frac{(s+1)z}{r} <
\frac12\big(N-z-l(\lambda)\big)+\max(0,\lfloor z-s\rfloor). \eeq
Moreover, the inequality \mref{prop57proof} is valid for $r>N+1$
since it is valid for $r=N+1$ and its right hand side is
non-negative. Therefore the statement for case $(ii)$ is implied by
Corollary~\ref{corollary:integrable}. Note that the same argument
applies also for $s>N-1$, $s\in\R$, after replacing $J_s$ by
$\C[x]$. Indeed, in this situation one has $m(L)=\max(0,\lfloor
z-s\rfloor)=0$. This settles case $(iii)$.

The last case when $r,s <0$ is obvious.
\end{proof}


Let $\SS_c$ be the minimal non-zero submodule of the polynomial
representation of a rational Cherednik algebra. This submodule is
unique since any submodule is an ideal in $\C[x]$ (see also
\cite[Section 4.6]{ESG}). For generic $c$ the submodule $\SS_c$
coincides with $\C[x]$ however for special $c$ it becomes
non-trivial. As a corollary from the previous considerations
and by \cite[Proposition 4.12]{ESG}  we have the
following result on unitarity of the minimal submodule $\SS_c$.

\begin{theorem}\label{mainth}
\begin{enumerate}
\item
The minimal submodule $\SS_c$ for the rational Che\-red\-nik algebra
$H_c(D_N)$  is unitary if $c=1/(2r)$ where $1\le r \le N-1$, $r \in \Z$.

\item
The minimal submodule $\SS_c$ for the algebra $H_c(B_N)$ is unitary
if the parameter $c=(c_1,c_2)=(\frac{1}{r},\frac12-\frac{s}{r})$
satisfies the restrictions stated in Theorems
\ref{proposition:B-type}, \ref{proposition:B-type-second}. In particular, for $c_1=c_2$ the minimal
submodule is unitary for $c_1=1/r$ where $2\le r \le 2N$, $r \in
2\Z$, or $r > 2N$, $r \in \R$.

\end{enumerate}
\end{theorem}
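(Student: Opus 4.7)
The plan is to deduce both parts of the theorem from the local $L^2$-integrability results of Theorems~\ref{proposition:D-type}--\ref{proposition:B-type-second} combined with the unitarity criterion of \cite[Proposition~4.12]{ESG} recalled in Section~\ref{section:intro}: $\SS_c$ is unitary once the Gaussian integral $\gamma_c(f)$ of~\mref{Gaussian} converges for every $f\in\SS_c$.

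The first step is to produce, for each admissible $c$, a non-trivial $H_c$-submodule $M_c\subset\C[x]$ containing $\SS_c$ for which $\Phi_f=|f(x)|\prod_{\a\in\mathcal R_+}|(\a,x)|^{-c(\a)}$ has already been shown to be locally $L^2$-integrable. In the $D_N$ case with $c=1/(2r)$ one takes $M_c=K_{2r-1}$ when $r\le N/2$ (Theorem~\ref{proposition:D-type} together with Proposition~\ref{proposition:D-representation}) and $M_c=J_r$ when $r>N/2$ (Theorem~\ref{proposition:D-type-second} together with the module property recorded in Section~\ref{section:ideals}). In the $B_N$ case one selects analogously among $K_{r-1,r-s-1}$, $I^\pm_{r-1}$, $J_s$ and $\C[x]$ according to which regime of Theorems~\ref{proposition:B-type} and~\ref{proposition:B-type-second} applies. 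Since $\SS_c$ is the unique minimal non-trivial submodule of $\C[x]$, the inclusion $\SS_c\subset M_c$ is automatic, so $\Phi_f$ is locally $L^2$-integrable at every point of $\R^N$ for every $f\in\SS_c$.

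The second step is to upgrade local integrability to convergence of $\gamma_c(f)=\int_{\R^N}\Phi_f^2\,e^{-|x|^2/2}\,dx$. A partition-of-unity argument shows the integral is finite over any closed ball, and on the complement $\{|x|>R\}$ a polar-coordinate decomposition combined with the local $L^2$-integrability near the origin bounds the angular slice while the Gaussian kills the radial factor, so the tail integral is also finite. Hence $\gamma_c(f)<\infty$ for every $f\in\SS_c$ and \cite[Proposition~4.12]{ESG} yields unitarity. For part~(2) I would then match ranges: setting $c_1=c_2=1/r$ with $c_2=\tfrac12-s/r$ forces $s=r/2-1$, which is integral precisely when $r$ is even; Theorem~\ref{proposition:B-type} then covers $2\le r\le N$, Theorem~\ref{proposition:B-type-second}(ii) covers $N<r\le 2N$, and Theorem~\ref{proposition:B-type-second}(iii)---where $s>N-1$ need only be real---covers $r>2N$. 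I do not expect a serious obstacle here: the substantial combinatorial multiplicity estimates have already been carried out in Sections~\ref{section:brute-force}--\ref{section:conclusions}, and both the local-to-global passage via Gaussian decay and the case-by-case parameter bookkeeping are routine.
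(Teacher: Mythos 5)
Your proposal is correct and follows essentially the same route as the paper: the theorem is deduced by observing that $\SS_c$, being the unique minimal non-zero submodule (hence contained in every non-zero ideal-submodule), lies in the ideals $K_{2r-1}$, $J_r$, $K_{r-1,r-s-1}$, $I^\pm_{r-1}$, $J_s$ or $\C[x]$ covered by Theorems~\ref{proposition:D-type}--\ref{proposition:B-type-second}, after which local $L^2$-integrability of $\Phi_f$ plus homogeneity and Gaussian decay give convergence of~\mref{Gaussian}, and unitarity follows from \cite[Proposition~4.12]{ESG}. Your parameter bookkeeping for the equal-parameter case ($s=r/2-1$, split across the ranges $2\le r\le N$, $N<r\le 2N$, $r>2N$) matches the paper's intended reading.
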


Theorem \ref{mainth} in the case of constant multiplicity $c$
establishes unitarity of the simple module $\SS_c$ where $1/c$ has
to be a degree of the corresponding Coxeter group. The following
Proposition shows that this restriction is not necessary for
unitarity of the simple module.

\begin{proposition}\label{518}
Let $N\ge 3$. Then the minimal module $\SS_c$ is a unitary
representation of $H_c(B_N)$ {\lb}resp. $H_c(D_N)$\rb\  for $c=(1/3,
a)$ and $a \le 0$ {\lb}resp. $c=1/3$\rb.
\end{proposition}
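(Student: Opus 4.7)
The strategy is to reduce both statements to a single local $L^2$-integrability claim on $\R^N$. Since $c_1=1/3$ and $N\ge 3$, the remarks preceding Proposition~\ref{genidi} (based on~\cite[Sections~4.2--4.3]{F}) show that $I_2^\pm$ is a non-trivial $H_c(B_N)$-submodule of $\C[x]$ for every $c_2$ and a non-trivial $H_{1/3}(D_N)$-submodule. Consequently $\SS_c\subset I_2^\pm$ in both settings, and by \cite[Proposition~4.12]{ESG} it suffices to prove local $L^2$-integrability of $\Phi_f$ on $\R^N$ for every $f\in I_2^\pm$. For the $B_N$ case with $c=(1/3,a)$ and $a\le 0$, the factor $\prod_i |x_i|^{-a}$ appearing in $\Phi_f$ has non-negative exponent and is locally bounded; hence local $L^2$-integrability of $\Phi_f$ is equivalent to local $L^2$-integrability of
$$
\widetilde\Phi_f=\frac{|f(x)|}{\prod_{i<j}|x_i^2-x_j^2|^{1/3}},
$$
which is exactly the integrand for the $D_N$ case with $c=1/3$. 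The task therefore reduces to checking that $\widetilde\Phi_f$ is locally $L^2$-integrable on $\R^N$ for every $f\in I_2^\pm$.

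To do this I would apply Corollary~\ref{corollary:integrable-equal-c} to the $D_N$ hyperplane arrangement with $c=1/3$, reducing it to verifying
$$
\kappa(L)=\sum_i\frac{\lambda_i(\lambda_i-1)}{6}+\frac{z(z-1)}{3}<\frac{N-l(\lambda)}{2}+m(L)
$$
for every subspace $L$ in the semi-lattice of type $(\lambda,z)$ with $z\ne 1$ and $\codim(L)\ge 2$; the subspaces of type $\lambda^-$ produce the same numerical data. By the calculation in the proof of Theorem~\ref{proposition:B-type-second}(i) specialised to $r=3$,
$$
m(L)=m_{I_2^\pm}(L)=\sum_i\frac{\lambda_i(\lambda_i-2)}{4}+\frac{R_2(\lambda)}{4}+\frac{z(z-2)}{2}+\frac{\rho_2(z)(2-\rho_2(z))}{2}.
$$
For $z=0,\lambda_1>1$ the inequality is precisely Lemma~\ref{lemma:type-A-crocodile} with $k=2$; the case $\lambda_1=1,\,z=0$ gives the ambient space and is excluded by the codimension condition.

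The main step is the range $z\ge 2$. Here I would apply Lemma~\ref{lemma:type-B-crocodile} with $k=2$ to $\lambda$ regarded as a partition of $N-z$ and then divide by $2$; combining the result with $\sum_i \lambda_i(\lambda_i-1)/4\ge\sum_i\lambda_i(\lambda_i-1)/6$ yields the $\lambda$-slack
$$
\sum_i\frac{\lambda_i(\lambda_i-2)}{4}+\frac{R_2(\lambda)}{4}+\frac{N-l(\lambda)}{2}-\sum_i\frac{\lambda_i(\lambda_i-1)}{6}\ge\frac{z}{2}.
$$
This joins with the elementary identity
$$
\frac{z}{2}+\frac{z(z-2)}{2}+\frac{\rho_2(z)(2-\rho_2(z))}{2}-\frac{z(z-1)}{3}=\frac{z(z-1)}{6}+\frac{\rho_2(z)(2-\rho_2(z))}{2},
$$
whose right-hand side is strictly positive for every $z\ge 2$, to yield the required strict inequality. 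The delicate point is precisely this balancing of the $\lambda$-slack from Lemma~\ref{lemma:type-B-crocodile} against the $z$-deficit of $m(L)$ relative to $\kappa(L)$; once secured, Corollary~\ref{corollary:integrable-equal-c} concludes the argument.
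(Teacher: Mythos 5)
Your reduction to the local $L^2$-integrability of $|f|/\prod_{i<j}|x_i^2-x_j^2|^{1/3}$ for $f\in I_2^{\pm}$, and the observation that the $B_N$ case with $a\le 0$ is subsumed because the extra factor $\prod_i|x_i|^{-a}$ is locally bounded, match the paper exactly. Where you diverge is in verifying the resulting numerical inequality. You recycle the machinery already built: for $z\ge 2$ you apply Lemma~\ref{lemma:type-B-crocodile} with $k=2$ (divided by $2$), use the trivial bound $\sum\lambda_i(\lambda_i-1)/4\ge\sum\lambda_i(\lambda_i-1)/6$, and then close the $z$-deficit via the identity
$$\frac{z}{2}+\frac{z(z-2)}{2}+\frac{\rho_2(z)(2-\rho_2(z))}{2}-\frac{z(z-1)}{3}=\frac{z(z-1)}{6}+\frac{\rho_2(z)(2-\rho_2(z))}{2}>0;$$
for $z=0$, $\lambda_1>1$ you invoke Lemma~\ref{lemma:type-A-crocodile} directly. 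The paper instead clears denominators, rewriting the target as
$$\sum\lambda_i^2-4\sum\lambda_i+3R_2(\lambda)+6\rho_2(z)(2-\rho_2(z))+2z^2-8z+6(N-l(\lambda))>0,$$
and finishes with the short ad hoc bounds $R_2(\lambda)+N-z\ge 2l(\lambda)$, $\lambda_i\ge 1$, $\sum\lambda_i=N-z$. Both arguments are correct; your version is more modular in that it leans on the general-$k$ lemmas and exhibits the exact positive slack $\frac{z(z-1)}{6}+\frac{\rho_2(z)(2-\rho_2(z))}{2}$, whereas the paper's is a self-contained one-line computation. Neither buys anything substantive over the other here, though your route makes it more transparent which general estimates are actually being specialised.
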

\begin{proof}
It is sufficient to establish that
$$
\frac{f}{\prod_{i<j}^N (x_i^2-x_j^2)^{\frac{1}{3}}}
$$
is locally $L^2$-integrable for any $f \in I_2^\pm$.
Using Corollary \ref{corollary:integrable-equal-c}
and the previous calculation of multiplicities of $f \in I_k^\pm$ it is sufficient to establish that
$$
\sum{\lambda_i}^2-4\sum \lambda_i+3 R_2(\lambda) + 6 \rho_2(z)(2-\rho_2(z)) + 2{z^2} - 8 z + 6(N-l(\lambda)) >0
$$
where $\lambda=(\lambda_1,\ldots,\lambda_{l(\lambda)})$ is a partition of $N-z$ such that $\lambda_1 \ge 2$ if $z=0$. The last inequality follows using $R_2(\lambda)+N-z \ge 2 l(\lambda)$, $\lambda_i \ge 1$, $\sum \lambda_i = N-z$.
\end{proof}

\begin{remark}\label{remark:N-5}
Note that analogous considerations for $c=1/5$ show divergence of the integral expressing
the Gaussian inner product on the representation $I_4^\pm$ for any
$N\ge 5$ (cf. Remark~\ref{remark:reverse}).
\end{remark}

\begin{remark}
It would be interesting to see if the above ideals $I_k^\pm$, $J_k$, $K_{r,s}$
can be used to determine the composition series of the
polynomial representation for the algebras $H_c(D_N)$, $H_c(B_N)$ in some cases.
For instance in the case of the group $D_N$ and $c=1/(2m+1)$ for positive
integer $m \le (N-1)/2$ there is a natural sequence of submodules
$$0=I_{2m}^0
\subset I_{2m}^1 \subset \ldots \subset
I_{2m}^{[\frac{N}{2m+1}]} \subset \C[x],$$ where $I_{2m}^1=I_{2m}^\pm$ and the
support of the module $\C[x]/I_{2m}^s$ is stabilized by the
parabolic subgroup $A_{2m}^s$
(see \cite{F}; $I_{2m}^s$ vanishes on the $D_N$ orbit of the vanishing
set for the corresponding ideal $I^s$ in the composition series
for the polynomial representation
for $H_c(A_{N-1})$~\cite[Theorem~5.10]{ESG}). We note
that the support of $\C[x]/I_{2m}^{[\frac{N}{2m+1}]}$
then coincides with the support of the irreducible factor $L_{c}$ as it is
determined in~\cite[Theorem~3.1]{E}.
\end{remark}

\section{Some more unitarity results}
\label{section:moreunitarity}

In this section we present a few more results on the convergence of the integrals
\begin{equation}\label{Gaussian-later}
\int_{\R^N} {|f(x)|^2 e^{-\frac12 |x|^2}}{\prod_{\a \in \mathcal{R}_+}
|(\a,x)|^{-2 c}} dx
\end{equation}
where $\mathcal R \subset \R^N$ is an irreducible Coxeter root system with the Coxeter group $W$, and $c \in \R$.
In the case of convergence on the minimal submodule $\SS_c$ for the corresponding rational Cherednik algebra $H_c(W)$
this integral expresses the Gaussian inner product on $\SS_c$, the module $\SS_c$ is then unitary (see \cite{ESG}).
We will be assuming without loss of generality that the rank 
of $\mathcal R$ equals $N$.

\begin{proposition}
The minimal $H_{1/2}(W)$-module $\SS_{1/2}$ is unitary.
\end{proposition}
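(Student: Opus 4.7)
The plan is to identify $\delta := \prod_{\alpha \in \mathcal{R}_+}(\alpha,x)$ as an $H_{1/2}(W)$-singular polynomial and to show that the principal ideal $\delta\cdot\C[x]$ is an $H_{1/2}(W)$-submodule of $\C[x]$. Uniqueness of the minimal nonzero submodule will then force $\SS_{1/2}\subseteq\delta\cdot\C[x]$, after which the Gaussian integral collapses to that of a polynomial and convergence is automatic.

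First I would check that $\delta$ is singular. Anti-invariance $s_\alpha\delta=-\delta$ gives $(1-s_\alpha)\delta=2\delta$, so formula~\mref{Dunkl} with $c=1/2$ yields
$$\nabla_\xi\delta=\partial_\xi\delta-\sum_{\alpha\in\mathcal{R}_+}\frac{(\alpha,\xi)}{(\alpha,x)}\delta=0,$$
the second equality because $\partial_\xi\delta=\delta\sum_{\alpha\in\mathcal{R}_+}(\alpha,\xi)/(\alpha,x)$. The same manipulation, applied to $\delta g$ for arbitrary $g\in\C[x]$ using $s_\alpha(\delta g)=-\delta\cdot s_\alpha g$, produces
$$\nabla_\xi(\delta g)=\delta\,\partial_\xi g+\tfrac{1}{2}\,\delta\sum_{\alpha\in\mathcal{R}_+}\frac{(\alpha,\xi)}{(\alpha,x)}\bigl(g-s_\alpha g\bigr),$$
which lies in $\delta\cdot\C[x]$ since $(g-s_\alpha g)/(\alpha,x)$ is a polynomial. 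Hence $\delta\cdot\C[x]$ is $H_{1/2}(W)$-stable, and by the uniqueness of $\SS_{1/2}$ as the minimal nonzero submodule of $\C[x]$ recalled in the Introduction, we conclude $\SS_{1/2}\subseteq\delta\cdot\C[x]$.

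Finally, writing any $f\in\SS_{1/2}$ as $f=\delta g$ with $g\in\C[x]$ produces the cancellation
$$|f(x)|^2\prod_{\alpha\in\mathcal{R}_+}|(\alpha,x)|^{-1}=|g(x)|^2\prod_{\alpha\in\mathcal{R}_+}|(\alpha,x)|,$$
so the integral~\mref{Gaussian-later} becomes the Gaussian integral of a polynomial over $\R^N$, which is manifestly finite. Unitarity of $\SS_{1/2}$ then follows from~\cite[Proposition~4.12]{ESG}. There is no real obstacle to overcome: the entire argument rests on the fact that at $c=1/2$ the anti-invariant factor $\delta^2$ exactly cancels the singular weight $\prod|(\alpha,x)|^{-1}$, so none of the log-resolution machinery of Section~\ref{section:L2} is needed.
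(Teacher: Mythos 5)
Your proof is correct and follows essentially the same route as the paper: the paper likewise observes that $\SS_{1/2}$ is contained in the ideal of polynomials divisible by $\Delta_W=\prod_{\a\in\mathcal R_+}(\a,x)$, so that $f^2\Delta_W^{-1}$ is regular and the Gaussian integral converges. The only difference is that you verify directly that $\Delta_W\cdot\C[x]$ is an $H_{1/2}(W)$-submodule (a correct computation), whereas the paper cites this fact from \cite{F}.
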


\begin{proof}
Let $I$ be the ideal of polynomials divisible by
$$\Delta_W(x)=\prod_{\a\in \mathcal R_+} (\a,x).$$
This ideal is an $H_{1/2}(W)$-module (see \cite{F}), therefore
$\SS_{1/2} \subset I$. But the function
$$
f^2(x) \Delta_W^{-1}(x), \quad f(x) \in I
$$
is locally integrable
as it is regular, hence the statement follows.
\end{proof}

\begin{proposition}\label{cox}
Let $h=h_W$ be the Coxeter number of the group $W$. Then
the Gaussian inner product \mref{Gaussian-later} converges on
$\SS_{1/h}$.
\end{proposition}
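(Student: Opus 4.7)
The plan is to reduce the claim to Corollary~\ref{corollary:integrable-equal-c}. Since $e^{-|x|^2/2}$ gives rapid decay, convergence of the integral~\mref{Gaussian-later} on $\SS_{1/h}$ is equivalent to local $L^2$-integrability of
$$
\Phi_f(x) \;=\; |f(x)|\prod_{\a \in \mathcal R_+} |(\a,x)|^{-1/h}
$$
at every point of $\R^N$, for every $f \in \SS_{1/h}$. First I would choose a finite set $\FF$ of generators of the ideal $\SS_{1/h}$ consisting of polynomials of positive degree; this is possible because $c=1/h$ is a singular value, so $\SS_{1/h}$ is a proper $\C[x]$-submodule and does not contain any nonzero constant.

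By Corollary~\ref{corollary:integrable-equal-c} it then suffices to verify
$$
\frac{1}{h} \;<\; \frac{\tfrac12 \codim(L) + m_\FF(L)}{K(L)}
$$
for every subspace $L$ in the intersection semi-lattice $\LL$ of the Coxeter arrangement $\{(\a,x)=0 : \a \in \mathcal R\}$. The main tool is Coxeter's identity $|\mathcal R_+| = Nh/2$ together with its analogue for parabolic subsystems: if $L \in \LL$, the roots vanishing on $L$ form a parabolic subsystem $\mathcal R_L$ of rank $\codim(L)$ with $K(L) = |\mathcal R_L \cap \mathcal R_+|$. Applying Coxeter's identity to each irreducible component (of rank $d_i$ and Coxeter number $h_i$) of the associated parabolic subgroup $W_L$, one obtains
$$
\frac{2K(L)}{\codim(L)} \;=\; \frac{\sum_i d_i h_i}{\sum_i d_i},
$$
a weighted average of the $h_i$'s.

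For $L = \{0\}$ one has $W_L = W$, so this ratio equals $h$ exactly and the required inequality reduces to $m_\FF(\{0\}) > 0$, which holds by construction of $\FF$. For $L \neq \{0\}$, $W_L$ is a proper parabolic subgroup of the irreducible $W$, and each component has $h_i < h$; hence $2K(L)/\codim(L) < h$ and the inequality holds with room to spare even when $m_\FF(L) = 0$.

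The only substantive step beyond the corollary and Coxeter's identity is the strict inequality $h_i < h$ for the Coxeter number of each irreducible component of a proper parabolic subgroup of an irreducible finite Coxeter group. I expect this to be the only mildly technical point, but it is a standard fact that follows by direct inspection of the classification (e.g.\ in type $A_n$ every proper parabolic has irreducible factors of rank $\le n-1$, hence Coxeter number $\le n < n+1 = h$; the other types are analogous).
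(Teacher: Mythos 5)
Your proof is correct and follows essentially the same route as the paper: reduce to the discrepancy inequality of Corollary~\ref{corollary:integrable-equal-c}, use Coxeter's identity $|\mathcal{R}_+|=Nh/2$ together with its analogue for the parabolic subsystem attached to each $L\in\LL$, settle $L=\{0\}$ via $\mult_0(f)>0$, and settle $L\neq\{0\}$ via $h_{W_0}<h$ for proper irreducible parabolics. The only nuance is that to guarantee $m_{\FF}(\{0\})>0$ you should take \emph{homogeneous} generators of positive degree (which exist because $\SS_{1/h}$ is a graded ideal); ``positive degree'' alone does not force vanishing at the origin.
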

\begin{proof}
We need to establish that
$$
\frac{1}{h} < \frac{\frac12 \codim(L) + \mult_L(f)}{\mult_L (\Delta_W)}
$$
for any $f\in \SS_{1/h}$ and for arbitrary element $L$
from the lattice generated by the reflection hyperplanes.

We note that $\SS_{1/h}$ is contained in the $H_{1/h}(W)$-invariant
ideal consisting of polynomials vanishing at $0$. Therefore when $L=\{0\}$
it is sufficient to establish that $h \cdot \rk(\mathcal R)$ equals
the number of roots which is a well known fact. When $L\ne \{0\}$ the
inequality follows from the previous fact and the property that
$h_{W_0} < h$ where $h_{W_0}$ is the Coxeter number of any proper irreducible parabolic
subgroup $W_0 \subset W$.
\end{proof}

Proposition \ref{cox} gives another derivation of the following
known result.
\begin{corollary}\cite[Corollary 4.2]{ESG}
The minimal submodule $\SS_{1/h}$ is unitary.
\end{corollary}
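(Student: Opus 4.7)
The plan is very short, because this corollary is essentially a one-line consequence of the preceding Proposition \ref{cox} combined with the general mechanism recalled in the Introduction. The only thing to do is to translate ``the Gaussian integral converges on $\SS_{1/h}$'' into ``$\SS_{1/h}$ is unitary''.

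I would argue as follows. By Proposition \ref{cox}, the integral
\[
\gamma_c(f)=\int_{\R^N}|f(x)|^2 e^{-\frac12|x|^2}\prod_{\alpha\in\mathcal R_+}|(\alpha,x)|^{-2c}\,dx
\]
is finite for every $f\in\SS_{1/h}$ at $c=1/h$. As recalled in the Introduction, for the minimal submodule $\SS_c\cong L_{\tau_c}$ the contravariant Hermitian form satisfies
\[
(f,f)_{\tau_c}=\lambda\,\gamma_c\!\left(e^{-\frac12\sum_{i=1}^N\nabla_i^2}f\right)
\]
for some real constant $\lambda$ independent of $f$ (this is the content of \cite[Proposition 4.12]{ESG}). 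Since the operator $e^{-\frac12\sum\nabla_i^2}$ is a well-defined automorphism of $\SS_c$ (the Dunkl operators act locally nilpotently on objects of category $\mathcal O$), Proposition \ref{cox} guarantees that the right-hand side makes sense for every $f\in\SS_{1/h}$. The integrand on the right is pointwise nonnegative, so $\gamma_c\ge 0$, and after fixing the sign of $\lambda$ the form $(\cdot,\cdot)_{\tau_c}$ is positive semidefinite on $\SS_{1/h}$. Because the form is simultaneously nondegenerate (it is the unique up to scalar invariant form on the simple module $L_{\tau_c}$), positive semidefiniteness upgrades to positive definiteness, which is the definition of unitarity.

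There is no real obstacle: the work was already done in Proposition \ref{cox}, and the passage from convergence of $\gamma_c$ to unitarity is the standard ESG mechanism. The only subtlety worth spelling out is that one invokes simplicity of $\SS_{1/h}\cong L_{\tau_{1/h}}$ to promote the semidefinite form to a definite one, which is why this yields genuine unitarity rather than merely a nonnegative Hermitian form.
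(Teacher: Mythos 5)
Your argument is correct and is exactly the route the paper intends: the corollary is stated as an immediate consequence of Proposition \ref{cox} via the mechanism of \cite[Proposition 4.12]{ESG} recalled in the Introduction (convergence of $\gamma_c$ on $\SS_c$ plus nonnegativity of the integrand and nondegeneracy of the contravariant form on the simple module yield unitarity). No discrepancy to report.
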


The proof of Proposition \ref{cox} also provides a proof for the
following statement.

\begin{corollary}
The Gaussian inner
product \mref{Gaussian-later} converges on $\C[x]$ when $c < 1/h$.
\end{corollary}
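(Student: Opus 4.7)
The plan is to derive the corollary from the local $L^2$-integrability criterion of Corollary~\ref{corollary:integrable-equal-c} combined with the Gaussian decay at infinity. Given any $f \in \C[x]$, I would take $\FF = \{f\}$ so that $m(L) = \mult_L(f) \ge 0$, and then verify the inequality
$$
c < \min_{L \in \LL} \frac{\tfrac12 \codim(L) + m(L)}{\mult_L(\Delta_W)}
$$
for every subspace $L$ in the semi-lattice generated by the reflection hyperplanes, where $\Delta_W = \prod_{\a \in \mathcal R_+}(\a,x)$.

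The heart of the matter is precisely the computation already carried out in the proof of Proposition~\ref{cox}: one shows that $\codim(L)/(2\mult_L(\Delta_W)) \ge 1/h$ for every $L$, using the fact that for $L = \{0\}$ we have $\mult_L(\Delta_W) = |\mathcal R_+| = hN/2$ by the Coxeter number identity, and for $L \ne \{0\}$ the roots vanishing on $L$ span a parabolic subsystem whose irreducible factors have Coxeter numbers strictly smaller than $h$, so the ratio becomes strictly larger than $1/h$. Dropping the $m(L)$ term (since it is non-negative) and using the hypothesis $c < 1/h$ then gives the required strict inequality at every $L$ simultaneously, with no need to separate cases or to invoke any vanishing of $f$ at the origin.

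Finally, local $L^2$-integrability of $|f| \prod_{\a \in \mathcal R_+} |(\a,x)|^{-c}$ at every point upgrades to convergence of the full Gaussian integral \mref{Gaussian-later} since the factor $e^{-|x|^2/2}$ dominates at infinity: splitting the Gaussian as $e^{-|x|^2/4}\cdot e^{-|x|^2/4}$ one uses the first half to absorb the polynomial growth of $|f|^2$ together with the negative-power factors (each $(\a,x)^{-2c}$ is bounded by a power of $|x|$ outside any compact neighbourhood of the reflection hyperplanes), while the second half ensures integrability of the resulting bound, and on the remaining bounded region the already-established local $L^2$-integrability applies. I do not anticipate any genuine obstacle here: the whole argument is a direct specialization of the proof of Proposition~\ref{cox}, with the strict inequality $c < 1/h$ playing the role previously supplied by the vanishing $\mult_{\{0\}}(f) \ge 1$ of elements of $\SS_{1/h}$.
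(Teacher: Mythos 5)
Your proposal is correct and is essentially the paper's own argument: the paper explicitly derives this corollary from the proof of Proposition~\ref{cox}, using that $\tfrac12\codim(L)/\mult_L(\Delta_W)$ equals $1/h$ at $L=\{0\}$ (via $|\mathcal R_+|=hN/2$) and exceeds $1/h$ elsewhere because proper irreducible parabolics have smaller Coxeter number, so the strict hypothesis $c<1/h$ replaces the vanishing of $f$ at the origin that was needed for $\SS_{1/h}$. Your additional remark on absorbing the behaviour at infinity into the Gaussian factor is a routine point the paper leaves implicit.
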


Let $W_0$ be a proper irreducible parabolic subgroup in the irreducible Coxeter group~$W$. Then for the corresponding Coxeter numbers one
has $h_{W_0}<\nlb h_W$. The following lemma can be checked case by
case.
\begin{lemma}\label{lemd}
Let $d$ be the highest degree of a Coxeter group $W$ of type $E$, $F$ or $H$ such that
$d<h_W$. Then for any proper irreducible parabolic subgroup $W_0$ one has
$h_{W_0} < d$.
\end{lemma}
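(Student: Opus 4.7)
The statement concerns only the six irreducible Coxeter groups of types $E_6$, $E_7$, $E_8$, $F_4$, $H_3$, $H_4$, so the proof reduces to a finite case check, and I would simply execute it. For each such $W$ the plan is: consult the standard tables of degrees (where the top degree equals $h_W$) to pick off the second-highest degree $d$, then enumerate the proper irreducible parabolic subgroups of $W$ and compare each of their Coxeter numbers to $d$.

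For the enumeration it is enough to examine the \emph{maximal} proper irreducible parabolic subgroups, namely those corresponding to connected proper subdiagrams of the Coxeter diagram of $W$ of rank $\rk(W)-1$ (obtained by deleting a single vertex of the diagram and, if the result disconnects, keeping one connected component). Every smaller irreducible parabolic subgroup is contained in a maximal one, and for each pair $W_0 \subset W_1$ of irreducible Coxeter groups appearing on our short list one verifies by direct inspection that $h_{W_0} \le h_{W_1}$; alternatively one can simply list all irreducible parabolic subgroups of each $W$, of which there are very few.

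A quick read-off gives $d = 9, 14, 24, 8, 6, 20$ for $W = E_6, E_7, E_8, F_4, H_3, H_4$ respectively. The maximum of $h_{W_0}$ over proper irreducible parabolic subgroups $W_0 \subset W$ is $8$ (attained by $D_5\subset E_6$), $12$ ($E_6\subset E_7$), $18$ ($E_7\subset E_8$), $6$ ($B_3, C_3\subset F_4$), $5$ ($I_2(5)\subset H_3$) and $10$ ($H_3\subset H_4$). Each is strictly smaller than the corresponding $d$, which proves the lemma.

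The only ``obstacle'' is routine bookkeeping: one has to be careful not to overlook any maximal parabolic subdiagram and to read the standard tables of degrees and Coxeter numbers correctly. There is no conceptual difficulty, as the inequality $h_{W_0} < d$ turns out to be comfortable (by at least $1$ in every case) for each $W$ on the list.
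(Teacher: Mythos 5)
Your proof is correct and is exactly what the paper intends: the paper merely asserts that the lemma "can be checked case by case," and your finite verification (with $d=9,14,24,8,6,20$ and $\max h_{W_0}=8,12,18,6,5,10$ for $E_6,E_7,E_8,F_4,H_3,H_4$ respectively) carries out that check with the correct values throughout.
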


\begin{proposition}\label{next-to-h-singular-value}
Let $W$ be of type $E$, $F$ or $H$.  Let $c=1/d$
where degree $d$ is defined in Lemma \ref{lemd}. Then the Gaussian
inner product \mref{Gaussian-later} converges on the minimal submodule for $H_c(W)$ hence the
module is unitary.
\end{proposition}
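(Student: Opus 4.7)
The plan is to mimic the proof of Proposition~\ref{cox} as closely as possible. By Corollary~\ref{corollary:integrable-equal-c}, it suffices to show that
$$\frac{1}{d} < \frac{\frac12\codim(L)+\mult_L(f)}{\mult_L(\Delta_W)}$$
for every $f\in\SS_{1/d}$ and every subspace $L$ in the lattice generated by the reflecting hyperplanes of $W$.

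For $L\neq\{0\}$, I will argue exactly as in Proposition~\ref{cox}, but invoking Lemma~\ref{lemd} in place of the bound $h_{W_0}<h_W$. The pointwise stabilizer of $L$ is a parabolic subgroup $W_0=W_1\times\dots\times W_m$ where each $W_i$ is an irreducible proper parabolic of $W$. Combining the identity $\mult_L(\Delta_W)=\tfrac12\sum h_{W_i}\rk(W_i)$ with $\codim(L)=\sum\rk(W_i)$ and the inequality $h_{W_i}<d$ supplied by Lemma~\ref{lemd} yields the required bound, with no contribution from $\mult_L(f)$ needed.

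The main case, and main obstacle, is $L=\{0\}$. Here $\mult_0(\Delta_W)=h_W N/2$, so I must verify that $\mult_0(f)>(h_W-d)N/(2d)$ for every $f\in\SS_{1/d}$. A short case-by-case substitution of the degrees of $E_6,E_7,E_8,F_4,H_3$ and $H_4$ shows that $(h_W-d)N/(2d)$ evaluates to exactly $1$ in every one of these cases. Thus what remains is to prove that $\SS_{1/d}$ contains no nonzero linear polynomial.

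To rule out linear polynomials in $\SS_{1/d}$, I will argue that the Dunkl operators preserve $\SS_{1/d}$ and strictly lower the degree, so any minimal-degree element of $\SS_{1/d}$ must be singular, i.e., annihilated by every $\nabla_\xi$. For a linear polynomial $f(x)=(\eta,x)$ a direct computation from~\mref{Dunkl} gives
$$\nabla_\xi f=\big(1-c\,h_W\big)(\xi,\eta),$$
after using the $W$-invariance of $\sum_{\alpha\in\mathcal{R}_+}\alpha\otimes\alpha/(\alpha,\alpha)$ and a trace computation identifying this sum with $(h_W/2)\,\mathrm{Id}_{\R^N}$. Since $d<h_W$ forces $1-c\,h_W\neq 0$ when $c=1/d$, no nonzero linear polynomial is singular, so the minimal degree of $\SS_{1/d}$ is at least $2$, which completes the proof.
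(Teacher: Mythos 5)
Your proposal is correct and follows essentially the same route as the paper: the case $L\neq\{0\}$ is handled exactly as in Proposition~\ref{cox} via Lemma~\ref{lemd}, and the case $L=\{0\}$ reduces to showing $\mult_0(f)\ge 2$, which the paper also does (phrased as the numerical check $d>Nh_W/(N+4)$ together with the observation that $\SS_{1/d}$ can contain neither a constant nor the degree-one component). Your explicit computation $\nabla_\xi(\eta,x)=(1-c\,h_W)(\xi,\eta)$ is just the concrete form of the paper's appeal to the fact that the ideal of polynomials vanishing at the origin is $H_c$-invariant only for $c=1/h_W$, so the two arguments coincide.
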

\begin{proof}
We check integrability condition for $L=\{0\}$ first. We need to have
\begin{equation}\label{e824}
\frac{1}{d} < \frac{\frac12 N + \mult_0(f)}{\mult_0 (\Delta_{W})}
\end{equation}
where $f \in \SS_{c}$. Notice that $\mult_0(f) \ge 2$. Indeed, if the
multiplicity is~$0$ then $\SS_{c}$ has to coincide with $\C[x]$ which
is not the case as $c=1/d$ is a singular value for $W$. Now if the
multiplicity is 1 then $\SS_{c}$ contains homogeneous polynomials of
degree $1$ and hence the whole ideal of polynomials vanishing at $0$.
However this ideal is $H_c$-invariant only if $c=1/h_W$ which is not the case. Then since $\mult_0
(\Delta_{W})=\frac12 h_W N$, the inequality \mref{e824} reduces to
\beq{dineq}
d>\frac{N h_W}{N+4}
\eeq
which can be checked case by case.

Take now $L\ne\{0\}$ such that its stabilizer is a parabolic subgroup
$W_0=\prod_{i=1}^k W_i$ where parabolic subgroups $W_i$ are
irreducible and stabilize $L_i$ so $L=\bigcap_{i=1}^k L_i$. We have
$\codim(L) = \sum_{i=1}^k \rk (W_i)$ and
$$h_W \rk (W_i) > h_{W_i} \rk (W_i) = \mult_{L_i}(\Delta_{W_i}).$$
Therefore
\begin{multline}\label{wi}
\frac{\frac12 \codim(L) + \mult_L(f)}{\mult_L (\Delta_{W})} \ge
\frac{\codim(L)}{2 \mult_L (\Delta_{W_0})}  =\\
=
\frac{\sum_{i=1}^k
\rk(W_i)}{2\sum_{i=1}^k \mult_{L_i}(\Delta_{W_i})} \ge
\min\limits_{1\le i\le k}
\frac{1}{h_{W_i}}.
\end{multline}
Now the statement follows by Lemma \ref{lemd}.
\end{proof}

More explicitly Proposition \ref{next-to-h-singular-value}
shows that the minimal modules for $H_{1/9}(E_6)$, $H_{1/14}(E_7)$,
$H_{1/24}(E_8)$, $H_{1/8}(F_4)$, $H_{1/6}(H_3)$, $H_{1/20}(H_4)$ are unitary. A few more examples are provided by the following statement.

\begin{proposition}\label{proposition:other-exceptional-convergence}
The Gaussian inner products converge on the minimal submodules for $H_{1/8}(E_6)$, $H_{1/12}(E_7)$, $H_{1/5}(H_3)$ hence the modules are unitary.
\end{proposition}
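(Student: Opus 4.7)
The plan is to extend the argument of Proposition~\ref{next-to-h-singular-value}. Set $d=1/c\in\{8,12,5\}$. By Corollary~\ref{corollary:integrable-equal-c} applied with $\FF$ a generating set for the ideal $\SS_c$, convergence of~\mref{Gaussian-later} reduces to checking
$$
\frac{1}{d}<\frac{\tfrac12\codim(L)+m(L)}{\mult_L(\Delta_W)}
$$
for every subspace $L$ in the intersection semi-lattice of reflection hyperplanes. For $L=\{0\}$ the argument from the proof of Proposition~\ref{next-to-h-singular-value} yields $\mult_0(f)\ge 2$ for $f\in\SS_c$ (since $\SS_c\ne\C[x]$ and $c\ne 1/h_W$), and the inequality $(N/2+2)/|R_+(W)|>1/d$ is then checked numerically: $5/36>1/8$, $11/126>1/12$, $7/30>1/5$.

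For $L\ne\{0\}$ with parabolic stabilizer $W_0=W_1\times\cdots\times W_k$ the estimate~\mref{wi} gives $\tfrac12\codim(L)/\mult_L(\Delta_W)\ge 1/\max_i h_{W_i}$, which strictly exceeds $1/d$ unless some factor $W_i$ has $h_{W_i}=d$. A Dynkin-diagram inspection shows that in each of the three cases there is exactly one such factor, namely $D_5\subset E_6$, $E_6\subset E_7$, and $H_2=I_2(5)\subset H_3$; moreover, by rank considerations each of these factors can occur in a stabilizer only as the full parabolic $W_0$ itself, whose fixed subspace $L_\star=V^{W_0}$ is one-dimensional.

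The main step, and the principal obstacle, is to show that $\mult_{L_\star}(f)\ge 1$ for every generator $f\in\SS_c$. Once this is known, using $\codim(L_\star)=\rk(W_0)$ and $\mult_{L_\star}(\Delta_W)=|R_+(W_0)|=d\cdot\codim(L_\star)/2$ one gets
$$
\frac{\tfrac12\codim(L_\star)+1}{\mult_{L_\star}(\Delta_W)}=\frac{1}{d}+\frac{1}{|R_+(W_0)|}>\frac{1}{d},
$$
which finishes the verification.

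To establish the vanishing I would invoke the support description of the simple quotient $L_c=\C[x]/\SS_c$. Viewing $L_c$ as a $\C[x]$-module one has $\operatorname{Ann}_{\C[x]}(L_c)=\SS_c$, so $\supp(L_c)=V(\SS_c)$; for each of these three distinguished parameters the support of $L_c$ equals the $W$-orbit $W\cdot V^{W_0}$ of the critical fixed subspace. This identification follows from the general theory of supports of simple objects in category~$\mathcal O$ of rational Cherednik algebras (e.g.\ from the Bezrukavnikov--Etingof parabolic restriction formalism), and in the type~$A$ analogue it reproduces the fact that $\SS_{1/(k+1)}(\Sy_N)$ is precisely $I_k$. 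Consequently $V^{W_0}\subset V(\SS_c)$, every $f\in\SS_c$ vanishes on $L_\star$, and the proof closes. Alternatively, one could make the argument self-contained by exhibiting explicit singular polynomials generating $\SS_c$ (for instance, for $H_3$ at $c=1/5$ a degree-$5$ singular polynomial built as an $H_3$-equivariant lift of the $H_2$-discriminant) and verifying vanishing on $V^{W_0}$ directly, but this becomes computationally heavy for $E_6$ and $E_7$.
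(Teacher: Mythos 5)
Your overall strategy coincides with the paper's: reduce to the discrepancy inequality via Corollary~\ref{corollary:integrable-equal-c}, handle $L=\{0\}$ by the bound $\mult_0(f)\ge 2$ exactly as in Proposition~\ref{next-to-h-singular-value}, observe that \mref{wi} settles every $L\neq\{0\}$ except the one-dimensional fixed space $L_\star$ of the unique irreducible parabolic $W_0$ with $h_{W_0}=d$ (namely $D_5\subset E_6$, $E_6\subset E_7$, $I_2(5)\subset H_3$), and then win by showing $\mult_{L_\star}(f)\ge 1$ for all $f\in\SS_c$. Your case analysis and the final arithmetic $\bigl(\tfrac12\codim(L_\star)+1\bigr)/|R_+(W_0)|=1/d+1/|R_+(W_0)|$ are correct.

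The one place where you diverge is the justification of the key vanishing $\mult_{L_\star}(f)\ge 1$, and as written this is the weak link. You appeal to the statement that $\supp(\C[x]/\SS_c)$ equals (or at least contains) $W\cdot V^{W_0}$, attributing it to ``the general theory of supports''; but that identification is itself a nontrivial case-by-case result (it is essentially \cite[Theorem~3.1]{E}), and you neither quote the criterion nor verify it for $E_6$, $E_7$, $H_3$ at these parameters, so the principal step of the proof is asserted rather than proved. The paper closes this gap with a much shorter argument that you should adopt: because $c=1/h_{W_0}$, the parabolic ideal of polynomials vanishing on $W\cdot V^{W_0}$ is a nonzero proper $H_c(W)$-submodule of $\C[x]$ (by \cite{F}), and since $\SS_c$ is the \emph{minimal} submodule it must be contained in that ideal; hence every $f\in\SS_c$ vanishes on $L_\star$ and \mref{wi} becomes strict. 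This replaces your appeal to support theory (or to explicit singular polynomials, which you rightly note would be heavy for $E_6$ and $E_7$) with one line, and is exactly the extra ingredient that distinguishes this proposition from Proposition~\ref{next-to-h-singular-value}.
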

\begin{proof}
The proof is parallel to the proof of
Proposition~\ref{next-to-h-singular-value}.
We consider the case of $H_{1/8}(E_6)$, other cases are similar.
The value $d=8$ satisfies \mref{dineq} hence there is convergence at
$L=\{0\}$.
Let now $L$ be such that $\dim(L)=1$ and a generic point on $L$
is stable under the subgroup $D_5 \subset E_6$.
Since $1/8=h_{D_5}$, the minimal module $\SS_{1/8}$ is contained in the
parabolic ideal consisting of polynomials vanishing on the $E_6$-orbit of $L$ which is a module for $H_{1/8}(E_6)$ (see \cite{F}).
Therefore $\mult_L(f)>0$ for $f \in \SS_{1/8}$ and the  inequality \mref{wi}
is strict as required. For $L$ with different
stabilizers the convergence follows from \mref{wi} straightforwardly.
\end{proof}

The next statement shows that the convergence of the Gaussian inner product
is preserved under the restriction functor $\Res_b$ defined in \cite{BE}.
 Let $L_b$ be the minimal stratum containing a point $b\in \R^N$,
and let $n$ be its codimension. Let $W_b$ be the parabolic subgroup of $W$ which stabilizes $L_b$.

\begin{proposition}\label{Res}
Assume the Gaussian inner product converges on the minimal $H_c(W)$-module
$\SS_c \subset \C[x_1,\ldots,x_N]$. Then the Gaussian inner product converges on the $H_c(W_b)$-module $\Res_b
(\SS_c)$.
\end{proposition}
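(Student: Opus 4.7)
The plan is to localize the convergence question at the point $b$, split the coordinates along $L_b$ and its orthogonal complement $V_b := L_b^\perp$, and then invoke the Bezrukavnikov--Etingof structure theorem for the restriction functor to pass from local integrability of the $W$-integrand to local integrability of the $W_b$-integrand.

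First, as used throughout the paper, convergence of the Gaussian inner product on $\SS_c$ is equivalent to local $L^2$-integrability of $\Phi_f(x) = |f(x)| \prod_{\a \in \mathcal{R}^+} |(\a, x)|^{-c}$ at every point of $\R^N$. Near $b$, I introduce coordinates $x = b + u + v$ with $u \in L_b$, $v \in V_b$. The roots of $W_b$ are exactly the $\a \in \mathcal{R}$ vanishing on $L_b$, so they lie in $V_b$ and satisfy $(\a, x) = (\a, v)$. The remaining factors --- the Gaussian $e^{-|x|^2/2}$ and $\prod_{\a \notin \mathcal{R}_{W_b}}|(\a,x)|^{-c}$ --- are bounded above and away from zero in a neighborhood of $b$. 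Hence local $L^2$-integrability of $\Phi_f$ near $b$ is equivalent to local $L^2$-integrability near $(0,0) \in L_b \oplus V_b$ of $|f(b+u+v)|^2 \prod_{\a \in \mathcal{R}_{W_b}^+}|(\a,v)|^{-2c}$. By Fubini, for almost every $u$ close to $0$ the fiberwise integral over small $v$ is finite.

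Second, I would invoke the Bezrukavnikov--Etingof structure theorem~\cite{BE}: the formal completion of $\SS_c$ at $b$ is isomorphic, as a module over the corresponding completion of $H_c(W_b)$, to $\widehat{\C[L_b]}_b \otimes_\C \Res_b(\SS_c)$. Consequently each $f \in \SS_c$ admits, in a formal neighborhood of $b$, an expansion $f(b+u+v) = \sum_i h_i(u) g_i(v)$ with $g_i \in \Res_b(\SS_c)$, and every $g \in \Res_b(\SS_c)$ arises as some $g_i$ for a suitable lift $f$. Choosing finitely many lifts $f_1, \ldots, f_r$ so that the matrix of coefficient functions $(h_{i,j}(u_0))_{i,j}$ is invertible at a generic $u_0$ inside the good set from the Fubini step, a Vandermonde-style linear combination extracts $\int_{V_b,\,\mathrm{small}}|g(v)|^2 \prod_{\a \in \mathcal{R}_{W_b}^+} |(\a,v)|^{-2c}\,dv < \infty$, which is the required local $L^2$-integrability at $v=0$.

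Local $L^2$-integrability at $v_0 \in V_b$ with $v_0 \neq 0$ follows from the same argument applied at the point $b+v_0 \in \R^N$, whose stabilizer $W_{b+v_0}$ is a proper parabolic subgroup of $W_b$, so that this amounts to an induction on codimension. Convergence at infinity in $V_b$ is automatic since $\Res_b(\SS_c) \subset \C[V_b]$ consists of polynomials and the Gaussian factor dominates. The main obstacle is the extraction step in the previous paragraph: the Bezrukavnikov--Etingof isomorphism is module-theoretic and only formal in $u$, so one needs to turn it into genuine analytic data near $b$. The key inputs will be freeness of the completion of $\SS_c$ over $\widehat{\C[L_b]}_b$, so that a finite-dimensional truncation of $\Res_b(\SS_c)$ is realized by honest polynomials $f \in \SS_c$, together with the full-measure property of the good $u$-set, which guarantees that $u_0$ can be chosen both inside it and at which the matrix $(h_{i,j}(u_0))$ is invertible.
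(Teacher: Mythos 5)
Your overall strategy is genuinely different from the paper's, and as written it has a gap at exactly the step you yourself flag as ``the main obstacle.'' The crux of your argument is the extraction step: you need every $g\in\Res_b(\SS_c)$ to be reachable, fiberwise in $v$, from finitely many \emph{honest polynomials} $f_1,\dots,f_r\in\SS_c$ via an invertible matrix of coefficient functions $h_{i,j}(u)$. But the Bezrukavnikov--Etingof isomorphism only identifies the \emph{completion} $\widehat{(\SS_c)}_b$ with $\widehat{\C[L_b]}_b\,\hat\otimes\,\Res_b(\SS_c)$; a general $g\in\Res_b(\SS_c)$ is a priori only of the form $g=\sum_j f_j h_j$ with $f_j\in\SS_c$ and $h_j$ \emph{formal} power series in $x-b$, not a $u$-Taylor coefficient of a single $f\in\SS_c$, and not obviously a finite $\C$-linear combination of values $f_j(b+u_0+v)$ at a fixed $u_0$. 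You assert the needed freeness/realizability (``a finite-dimensional truncation of $\Res_b(\SS_c)$ is realized by honest polynomials'') but do not prove it, and transferring an $L^2$-bound through a formal (infinite or truncated-with-error) expansion is precisely where integrability statements are fragile: truncating $h_j$ leaves an error in $\mathfrak{m}_b^K\cdot\C[[x-b]]$ whose square-integrability against $\prod|(\a,x)|^{-2c}$ near the hyperplanes of $W_b$ is not automatic when $2c\ge 1$. Until this step is closed, the proof is incomplete (the Fubini step and the reduction to the fiber over $V_b$ are fine).

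For comparison, the paper avoids analysis at this point entirely and works with the discrepancy criterion of Corollary~\ref{corollary:integrable}: convergence on $\SS_c$ gives, for every $\tilde L$ in the $W$-lattice containing $L_b$, the inequality $\mult_{\tilde L}(\delta(W,c))<\tfrac12\codim(\tilde L)+m_{\SS_c}(\tilde L)$ (here the converse of Theorem~\ref{theorem:klt-integrable}, valid for real hyperplane arrangements by Remark~\ref{remark:reverse}, is used); then one only has to observe that completion at $b$ and passage to $\Res_b(\SS_c)$ cannot \emph{decrease} multiplicities along such $\tilde L$ --- which is immediate from $g=\sum_j f_j h_j$ --- while $\codim$ and the multiplicity of the denominator are unchanged, and Corollary~\ref{corollary:integrable} applied to $W_b$ finishes the proof. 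In other words, the robust invariant to push through the restriction functor is the vanishing order along strata, not the integral itself; if you reformulate your argument in those terms, the obstacle you identified disappears.
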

\begin{proof}
Let $M$ be the affine plane orthogonal to $L_b$ such that $b \in M$.
Let $ L \subset M$ be the element of the intersection lattice of $W_b$ 
acting in~$M$. Note that $L= {\tilde L} \bigcap M$ where ${\tilde L}$ is an element of the intersection lattice for $W$ such that ${\tilde L}\supset L_b$. Let
$$
\delta(W,c)= \prod_{\a\in {\mathcal R}_+} (\a,x)^{c} , \quad \quad
\delta(W_b,c)= \prod_{\nad{\a\in {\mathcal R}_+}{(\a,L_b)=0}} (\a,x)^{c}
$$
Due to convergence of the initial Gaussian product we have
\begin{equation}\label{tildeL}
\mult_{\tilde L}\big(\delta(W,c)\big) <
\frac12 \codim(\tilde L) + m_{\SS_c}(\tilde L),
\end{equation}
where $m_{\SS_c}(\tilde L)$, as usual,
denotes the minimal multiplicity of the elements of $\SS_c$ on $\tilde L$.

The module $\Res_b(\SS_c)$ is obtained by completion $\widehat{(\SS_c)}_b$
at $b$ with subsequent extraction of the polynomial part such that
the polynomials are constant in the direction of the stratum $L_b$.
Under this process we have
$$m_{\SS_c}(\tilde L) \le m_{\Res_b \SS_c}(\tilde L) = m_{\Res_b \SS_c}(L).$$
Since $\codim(L) = \codim(\tilde L)$ and
$\mult_{\tilde L}\big(\delta(W,c)\big) = \mult_L\big(\delta(W_b,c)\big)$,
the inequality \mref{tildeL} implies
$$
\mult_{L}\big(\delta(W_b,c)\big) < \frac12 \codim(L) + m_{\Res_b \SS_c}(L),
$$
and the statement follows by Corollary \ref{corollary:integrable}.
\end{proof}
Note that the module $\Res_b
(\SS_c)$ is non-trivial for any $b \in \R^N$ so the convergence
of the Gaussian inner product
on the minimal submodule for $H_c(W_b)$ also follows. Note also that the proof of Proposition \ref{Res}
works also in the case of non-constant $W$-invariant $c$.


\section{A few negative results}
\label{section:conclusionsnegative}

In this section\footnote{This section is largely based on the
comments which P.\,Etingof and S.\,Griffeth kindly provided to us on
the preliminary version of the paper.}
we explain that the minimal submodule $\SS_c$ is not
unitary in the case of the groups $D_N$, $B_N$, $c=1/N$, $N$ is odd, and
present a few more examples when the integral \mref{Gaussian-later} diverges on
the minimal submodule (cf. Remark~\ref{remark:N-5}).

We are indebted to S.\,Griffeth for explanations leading to the following
result.
\begin{proposition}\label{prgrif}\cite{Grpc}
The minimal submodule $\SS_c$ for $H_c(D_N)$ is not unitary when $N\ge
5$ is odd and $c=1/N$.
\end{proposition}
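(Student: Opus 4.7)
The plan is to show that the contravariant Hermitian form $(\cdot,\cdot)_{\tau_c}$ on $\SS_c$ fails to be positive definite for $c=1/N$ and $W=D_N$ with odd $N\ge 5$. First I would identify the irreducible $D_N$-module $\tau_c$ that appears as the lowest homogeneous component of $\SS_c$, together with its minimal degree $m_0$. At $c=1/N$ with $N$ odd, the singularity is caused by the exceptional degree $N$ of $D_N$ (which is responsible for the fundamental invariant $x_1\cdots x_N$), so $\tau_c$ and $m_0$ should be readable off from the Dunkl--Opdam classification of singular polynomials applied to this value.

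My preferred route is then to invoke the converse direction of Theorem~\ref{theorem:klt-integrable}, valid for real hyperplane arrangements as stated in Remark~\ref{remark:reverse}, together with the bound of Corollary~\ref{corollary:integrable}, to exhibit a subspace $L$ in the $D_N$ intersection lattice at which the local integrability condition $\kappa(L)<\tfrac12\codim(L)+m_{\SS_c}(L)$ fails. For $L=\{0\}$ one computes $\kappa(\{0\})=(1/N)\cdot N(N-1)=N-1$ and $\tfrac12\codim(\{0\})=N/2$, so integrability at the origin reduces to the inequality $m_0>(N-2)/2$; if this fails then divergence of the Gaussian integral on the lowest-degree component of $\SS_c$ is immediate, and otherwise one refines the analysis to smaller strata such as coordinate axes or loci fixed by proper parabolic subgroups. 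Once divergence of $\gamma_c$ is obtained on some $f\in\SS_c$, the identity $(f,f)_{\tau_c}=\lambda\gamma_c(e^{-\frac12\sum\nabla_i^2}f)$ cannot hold with finite left-hand side unless $\lambda\le 0$, which forces the form to fail positive definiteness and yields non-unitarity.

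The main obstacle is the explicit determination of $m_0$ and $\tau_c$ at this exceptional singular value, since standard generator polynomials such as $x_1\cdots x_N$ are not singular for $H_{1/N}(D_N)$ (a direct Dunkl operator computation shows $\nabla_i(x_1\cdots x_N)\neq 0$). An alternative route, presumably closer to Griffeth's reasoning, is to compute the signature character of the contravariant form on $L_{\tau_c}$ directly via the Jantzen filtration or via the signature machinery for Cherednik algebras of type $G(r,p,N)$, and to exhibit a homogeneous component of $\SS_c$ on which the form has a negative eigenvalue. One could also reduce to a small base case such as $D_5$ using the Bezrukavnikov--Etingof restriction functor $\Res_b$ at a point $b$ whose stabilizer is a proper parabolic subgroup of $D_N$, exploiting Proposition~\ref{Res} to show that unitarity is preserved under restriction and therefore non-unitarity propagates from the base case back to arbitrary odd $N\ge 5$.
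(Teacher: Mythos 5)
Your primary route does not work: divergence of the Gaussian integral $\gamma_c$ does \emph{not} imply failure of positive definiteness of the contravariant form. The identity $(f,f)_{\tau_c}=\lambda\,\gamma_c(e^{-\frac12\sum\nabla_i^2}f)$ is only asserted (and only meaningful) when the integral converges; if $\gamma_c$ diverges the identity simply fails to be available, and no conclusion about the sign of $(f,f)_{\tau_c}$ follows. The paper is careful to keep these two notions apart: Section~\ref{section:conclusionsnegative} separately records modules on which the integral diverges (e.g.\ $H_{1/m}(D_N)$ for all odd $5\le m\le N$) without claiming non-unitarity for them, and the non-unitarity statement is made only for $c=1/N$. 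Even within the divergence analysis your argument is incomplete: you leave the minimal degree $m_0$ undetermined (it is $2$, since the lowest component of $\SS_{1/N}$ is spanned by the $x_1^2-x_i^2$, so that $\SS_{1/N}=I_{N-1}^{\pm}$), and with $m_0=2$ the test at $L=\{0\}$ reads $N-1<N/2+2$, which \emph{holds} for $N=5$, so the origin gives you nothing in the smallest case. The restriction-functor reduction also fails for a parameter reason: $\Res_b$ preserves $c$, so restricting $H_{1/N}(D_N)$ to a $D_5$ parabolic yields $H_{1/N}(D_5)$, not $H_{1/5}(D_5)$; moreover Proposition~\ref{Res} concerns convergence of the Gaussian product, not unitarity.

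The paper's actual argument is your sketched alternative (a), carried out explicitly. First one identifies $\SS_c$: by Griffeth's uniqueness of the nontrivial submodule for $H_c(B_N)$ with $c(e_i)=0$, $c(e_i\pm e_j)=1/N$, one gets $\SS_c=I_{N-1}^{\pm}\cong L_\tau$ with $\tau$ the reflection representation of $\Sy_N\subset D_N$ and lowest component spanned by the $x_1^2-x_i^2$. Then a one-line Dunkl-operator computation with $f=x_2^2-x_3^2$ gives $\nabla_{e_1}(x_1f)=\lambda f$ with $\lambda=\frac{4}{N}-1$, whence $(x_1f,x_1f)_\tau=\lambda\,(f,f)_\tau<0$ for $N\ge 5$. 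To repair your proposal you would need to supply exactly this kind of explicit negative norm (or a genuine signature computation), rather than infer non-unitarity from divergence.
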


We start with the following statement.

\begin{lemma}
Let $c=1/N$ where $N$ is odd. Then $\C[x]/\SS_c$ is a non-trivial
irreducible $H_c(D_N)$-module. Also $\SS_c \cong L_\tau$ where
$L_\tau$ is irreducible $H_c(D_N)$-module corresponding to
$D_N$-module $\tau$ given by the reflection representation of $\Sy_N
\subset D_N$. More specifically, the lowest homogeneous component of
$\SS_c$ is generated by the polynomials $x_1^2-x_i^2$ for $2\le i
\le N$.
\end{lemma}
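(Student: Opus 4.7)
The plan is first to identify $\SS_c$ explicitly as the ideal $I_{N-1}^{\pm}$ of Proposition~\ref{genidi} via a singular vector computation in low degrees, and then to deduce irreducibility of the quotient from the composition-series description noted in the remark at the end of Section~\ref{section:conclusions}.

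For the first step, I would observe that $x_1^2 - x_2^2$ is an $H_c(D_N)$-singular polynomial at $c = 1/N$: this is the $m = 2$, $p = 2$, $k = N-1$ specialization of Corollary~\ref{corsing}, noting that $p_{\nu_N^{N-1}}(x_1^2, \ldots, x_N^2) = \Delta(x_1^2, x_2^2) = x_1^2 - x_2^2$ and $c_1 = 1/(k+1) = 1/N$. Its $\Sy_N$-orbit spans the $(N-1)$-dimensional subspace $V$ with basis $x_1^2 - x_i^2$ for $2 \le i \le N$. The sign changes in $D_N$ fix every $x_i^2$ and therefore act trivially on $V$, while $\Sy_N$ acts by its reflection representation, so $V$ is the irreducible $D_N$-module $\tau$ from the statement. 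A direct computation yields $\nabla_i(x_j) = \delta_{ij}\big(1 - 2(N-1)c\big)$, which is nonzero at $c = 1/N$ for $N \ne 2$, so $\SS_c$ has lowest degree at least~$2$. A short $D_N$-isotypic case analysis of $\C[x]_2$ (using $\nabla_i(x_i x_j) = x_j\big(1 - 2(N-2)c\big) \ne 0$ at $c = 1/N$ for $N \ne 4$ to rule out the off-diagonal copies of $\tau$) then shows that every degree-$2$ singular polynomial lies in $V$.

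Consequently the lowest homogeneous component of $\SS_c$ is a nonzero $W$-subrepresentation of the irreducible $V$ and hence equals $V$; in particular $V \subset \SS_c$. By Proposition~\ref{genidi} the ideal $I_{N-1}^{\pm}$ is generated as a $\C[x]$-module by the $\Sy_N$-images of $x_1^2 - x_2^2$ and is an $H_c(D_N)$-submodule of $\C[x]$, so $\SS_c \subseteq I_{N-1}^{\pm}$. Conversely, $V$ consists of singular vectors, so $H_c \cdot V = \C[x] \cdot V = I_{N-1}^{\pm}$, giving $I_{N-1}^{\pm} \subseteq \SS_c$. Thus $\SS_c = I_{N-1}^{\pm}$, which is irreducible with lowest $W$-weight $\tau$, and therefore isomorphic to $L_\tau$.

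For the irreducibility of $\C[x]/\SS_c$ I would invoke the discussion at the end of Section~\ref{section:conclusions}: specializing to $c = 1/(2m+1)$ with $m = (N-1)/2$ collapses the natural filtration from \cite{F} to $0 \subset I_{N-1}^{\pm} \subset \C[x]$, and the support of the top quotient $\C[x]/I_{N-1}^{\pm}$ equals that of the simple head $L(\mathrm{triv})$ by \cite[Theorem~3.1]{E}. A Hilbert series comparison (the ring $\C[x]/I_{N-1}^{\pm}$ is the coordinate ring of the $2^{N-1}$ lines cut out by $I_{N-1}^{\pm}$) then identifies $\C[x]/\SS_c$ with $L(\mathrm{triv})$, which is irreducible. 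The principal obstacle is precisely this last step: while the identification of $\SS_c$ reduces to a low-degree singular vector calculation, ruling out intermediate composition factors between $\SS_c$ and $L(\mathrm{triv})$ requires the external input on supports of simple modules in category $\mathcal{O}$ for $H_c(D_N)$.
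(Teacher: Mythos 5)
Your identification of the degree-two singular vectors is correct and matches what the paper needs: $x_1^2-x_2^2$ is indeed the $m=p=2$, $k=N-1$ case of Corollary~\ref{corsing}, the span $V=\langle x_1^2-x_i^2\rangle$ is the irreducible $\tau$, and the computations $\nabla_i(x_j)=\delta_{ij}(1-2(N-1)c)$ and $\nabla_i(x_ix_j)=x_j(1-2(N-2)c)$ correctly exclude singular vectors in degree $1$ and in the $\{x_ix_j\}$ isotypic component (you should also note $\nabla_i(\sum_k x_k^2)=2x_i\neq 0$ to dispose of the trivial component). But the step ``Consequently the lowest homogeneous component of $\SS_c$ is a nonzero $W$-subrepresentation of $V$, hence equals $V$; in particular $V\subset\SS_c$'' is a non sequitur, and this is where the proof genuinely breaks. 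You have shown that the lowest degree of $\SS_c$ is at least $2$ and that the degree-$2$ singular vectors are exactly $V$; you have not shown that the lowest degree of $\SS_c$ is at most $2$. A priori $\SS_c$ could be a proper submodule of $I_{N-1}^{\pm}$ generated by singular vectors in some degree $d>2$, in which case its lowest component would not sit inside $V$ at all. Ruling this out requires global information about the submodule lattice of $\C[x]$, not a low-degree computation; the same global input is what you admit is missing for the irreducibility of $\C[x]/\SS_c$, which your Hilbert-series sketch moreover already presupposes $\SS_c=I_{N-1}^{\pm}$.

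The paper supplies exactly this missing input in one stroke: it regards the same Dunkl operators as those of $H_c(B_N)$ with $c(e_i)=0$, $c(e_i\pm e_j)=1/N$, and invokes Griffeth's result \cite[Theorem~7.5]{Gr} that this polynomial representation has a \emph{unique} non-trivial submodule. Since $I_{N-1}^{\pm}$ is such a submodule, it is simultaneously the socle and the radical for $B_N$; in particular a putative $H_c(D_N)$-submodule with lowest degree $d>2$ would produce $B_N$-singular vectors in degree $d$ inside the simple module $\SS^{B_N}_c=I_{N-1}^{\pm}$, a contradiction, and the irreducibility of $\C[x]/\SS_c$ also drops out. If you want to keep your computational route, you must replace the ``Consequently'' by an argument of this kind (or by an explicit determination of all singular vectors in every degree), since the degree-$\le 2$ analysis alone cannot exclude intermediate submodules.
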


\begin{proof}
Consider the polynomial representation $\C[x]$ for the rational
Cherednik algebra $H_c(B_N)$ where $N$ is odd, $c(e_i)=0$ and $c(e_i
\pm e_j)= 1/N$. It follows from \cite[Theorem 7.5]{Gr} that this
representation has unique non-trivial submodule. On the other hand
we know that $I_{N-1}^{\pm}$ is a submodule in $\C[x]$. Therefore
the only submodule for $H_c(B_N)$ is the minimal submodule
$\SS_c^{B_N}=I_{N-1}^{\pm}$. By Proposition \ref{genidi} the
elements in its lowest homogeneous component are linearly generated
by $x_1^2-x_i^2$ where $1 \le i \le N$.

Consider now the polynomial
representation for the algebra $H_c(D_N)$, let $M$ be a non-trivial
submodule. It is clear that the minimal degree of the homogeneous
elements in $M$ is $2$. Indeed the degree cannot be~$1$ as otherwise
$M=J_{N-1}$ which is not possible for $c=1/N$. Also the degree
cannot be bigger than $2$ as in this case there are singular
polynomials in this degree for $H_c(B_N)$-module $\SS_c^{B_N}$, so that it
is not a simple module which is a contradiction.

Since the span $\langle x_1^2-x_i^2\rangle$, $1 \le i \le N$, is
irreducible $D_N$-module it follows that the lowest homogeneous
component of $M$ coincides with the lowest homogeneous component of
$I_{N-1}^\pm$, therefore $M=I_{N-1}^\pm$.
\end{proof}

Now we prove Proposition \ref{prgrif}.
\begin{proof} Let $ f= x_2^2-x_3^2 \in \SS_c$. It
is easy to check straightforwardly that
$$
\nabla_{e_1} (x_1 f) = \lambda f,
$$
where $\lambda= \frac{4}{N}-1$. Let $(\cdot,\cdot)_{\tau}$ be the
contravariant form on $L_\tau$. We have
$$
(x_1 f, x_1 f)_{\tau} = \lambda (f,f)_{\tau}.
$$
Since $\lambda<0$ when $N\ge 5$, the module $\SS_c \cong L_\tau$ is
not unitary.
\end{proof}

Proposition \ref{prgrif} shows that in general the minimal 
$H_c(W)$-module $\SS_{c}$ is not unitary when $c=1/{d_i}$, 
with $d_i$ a degree of the Coxeter group $W$, thus providing negative answer to the Cherednik's question \cite{Chered, ESG}.
However the exceptions are rare namely the only exception for the classical root systems and constant parameter $c$ is given by $W=D_N$ with odd $N$, $c=1/N$.

As we saw in Propositions \ref{518}, 
\ref{proposition:other-exceptional-convergence} 
there are also examples when the Gaussian inner product converges on the minimal $H_c(W)$-module $\SS_c$ hence the module is unitary however $c\ne 1/d_i$ for any degree $d_i$. The examples found above are $H_{1/3}(D_N)$ with $N\ge 4$ and $H_{1/5}(H_3)$. It would be interesting to investigate when exactly $\SS_c$ is unitary.

Below we give a few more examples when Gaussian product diverges on $\SS_c$. First we present some analysis in type $B$ which is similar to the Proposition \ref{prgrif} above on type $D$.

\begin{proposition}
Consider the rational Cherednik algebra $H_c(B_N)$, $N\ge 3$ with the parameters $c(e_i\pm e_j)=1/N$, $c(e_i)=a$ such that $2 a +2j/N$ is not a positive odd number for any $0\le j \le N-1$. Suppose also that $N(2a+1)>4$. Then the minimal submodule $\SS_c$ is not unitary.
\end{proposition}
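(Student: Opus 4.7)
The plan is to mimic the argument used in Proposition~\ref{prgrif} by producing an explicit element of $\SS_c$ whose norm-squared with respect to the contravariant form $(\cdot,\cdot)_\tau$ is a negative multiple of another (nonzero) norm-squared.

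First, I would identify the minimal submodule. Since $c(e_i\pm e_j)=1/N$, the ideal $I^{\pm}_{N-1}\subset\C[x]$ is an $H_c(B_N)$-submodule by Proposition~\ref{proposition:D-representation}; its lowest homogeneous component is spanned by the polynomials $x_i^2-x_j^2$, thanks to Proposition~\ref{genidi}. Under the hypothesis that $2a+2j/N$ is never a positive odd integer for $0\le j\le N-1$, Theorem~7.5 of~\cite{Gr} implies that the polynomial representation has a unique proper submodule, so $\SS_c=I^{\pm}_{N-1}$ is simple and isomorphic to $L_\tau$ for a suitable $B_N$-module $\tau$; in particular, the contravariant form is non-degenerate on $\SS_c$.

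Next, I would take $f=x_2^2-x_3^2\in\SS_c$ and compute $\nabla_{e_1}(x_1 f)$ directly from the Dunkl operator for $B_N$,
$$
\nabla_{e_1}=\partial_1-\frac{1}{N}\sum_{j=2}^N\Big(\frac{1-s_{1j}^+}{x_1-x_j}+\frac{1-s_{1j}^-}{x_1+x_j}\Big)-\frac{a}{x_1}(1-\tau_1).
$$
A straightforward splitting of the sum according to whether $j\in\{2,3\}$ or $j\ge 4$ (the only three kinds of behaviour of $f$ under $s_{1j}^{\pm}$) and the identity $(1-\tau_1)(x_1 f)=2x_1 f$ yield, after collecting terms,
$$
\nabla_{e_1}(x_1 f)=\lambda\, f,\qquad \lambda=\frac{4}{N}-1-2a.
$$
By contravariance of the form and the defining relation $(x_i u,v)_\tau=(u,y_i v)_\tau$, this gives
$$
(x_1 f,x_1 f)_\tau=(f,\nabla_{e_1}(x_1 f))_\tau=\lambda (f,f)_\tau.
$$
Non-degeneracy from simplicity forces $(f,f)_\tau\neq 0$, while the hypothesis $N(2a+1)>4$ makes $\lambda<0$. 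Hence $(x_1 f,x_1 f)_\tau$ and $(f,f)_\tau$ have opposite signs, and the form cannot be positive definite on $\SS_c$.

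The main obstacle I anticipate is the justification that $(f,f)_\tau\ne 0$, which relies on showing that $\SS_c$ is simple (equivalently, that no nonzero element of the lowest homogeneous component of $\SS_c$ lies in the radical of the contravariant form). This is precisely what the numerical condition on $a$ encodes via Griffeth's classification of singular parameters for $G(2,1,N)$; the Dunkl computation itself is routine in comparison.
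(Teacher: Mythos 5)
Your proposal is correct and follows essentially the same route as the paper: identify $\SS_c=I^{\pm}_{N-1}\cong L_\tau$ via Griffeth's uniqueness theorem under the stated condition on $a$, then compute $\nabla_{e_1}(x_1f)=\lambda f$ for $f=x_2^2-x_3^2$ with $\lambda=(4-N)/N-2a<0$ exactly when $N(2a+1)>4$, so the contravariant form is indefinite. The value of $\lambda$ and the use of non-degeneracy on the simple module match the paper's argument.
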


\begin{proof}
The $H_c(B_N)$ module $\C[x]$ has unique non-trivial submodule if the first stated restriction for $a$ holds \cite[Theorem 7.5]{Gr}. Hence
we have $\SS_c = L_\tau = I_{N-1}^\pm$. The direct norm calculation similar to the proof of Proposition \ref{prgrif} gives $$
(x_1 f, x_1 f)_{\tau} = \lambda (f,f)_{\tau},
$$
where $f= x_2^2-x_3^2$ and $\lambda = (4-N)/N - 2 a$. Under the second stated restriction for $a$ we have $\lambda<0$ hence the module is not unitary.
\end{proof}

As a corollary we have the following statement on non-unitarity in the case of equal parameters.

\begin{corollary}
Let $N\ge 3$ be odd, let $c=1/N$. Then the minimal submodule $\SS_c$ for $H_c(B_N)$ is not unitary.
\end{corollary}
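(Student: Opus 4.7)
The plan is to simply verify the two hypotheses of the preceding Proposition for the specific parameter $a = 1/N$ (since equal parameters $c(e_i) = c(e_i\pm e_j) = 1/N$ means $a = 1/N$) and odd $N \ge 3$, and then invoke that Proposition directly.

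First, I would check the arithmetic condition that $2a + 2j/N$ is not a positive odd integer for any $0 \le j \le N-1$. Substituting $a = 1/N$ gives $2(j+1)/N$. If this were to equal $2m+1$ for some $m \ge 0$, we would have $2(j+1) = N(2m+1)$. The left-hand side is even, while the right-hand side is a product of two odd numbers (using that $N$ is odd), hence odd. This contradiction shows the first hypothesis is satisfied.

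Second, I would check the inequality $N(2a+1) > 4$. Substituting $a = 1/N$ yields $N(2/N + 1) = 2 + N$, which exceeds $4$ precisely when $N > 2$. Since $N \ge 3$, this holds.

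With both hypotheses verified, the previous Proposition applies and gives non-unitarity of $\SS_c$. There is no real obstacle here — the corollary is a direct specialization. The only subtlety worth flagging is the parity argument in the first step, which is exactly where oddness of $N$ is used (this is also what distinguishes the corollary from the even $N$ situation, where $2(j+1)/N$ can equal $1$ and the reducibility structure needed for the preceding Proposition may fail).
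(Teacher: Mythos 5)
Your proposal is correct and matches the paper's (implicit) argument exactly: the corollary is obtained by specializing the preceding proposition to $a=1/N$, and your verification of the two hypotheses — the parity contradiction $2(j+1)=N(2m+1)$ using oddness of $N$, and $N(2a+1)=N+2>4$ for $N\ge 3$ — is precisely what is needed.
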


Using Proposition \ref{Res} we get further corollary on divergence of the integral for the Gaussian product for the equal parameter cases.

\begin{corollary}
The integral \mref{Gaussian-later} is not convergent on the minimal submodules for $H_c(B_N)$ when $c=1/k$ with $3 \le k \le N$, $k$ is odd, and for  $H_{1/3}(F_4)$.
\end{corollary}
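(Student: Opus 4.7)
The plan is to reduce both assertions to the previous corollary via the restriction functor (Proposition~\ref{Res}). The previous corollary provides non-unitarity of $\SS_c$ for $H_c(B_k)$ with $c=1/k$ and odd $k\ge 3$, and by the contrapositive of the criterion that convergence of the Gaussian integral on the minimal submodule implies unitarity (cf.\ \cite[Proposition~4.12]{ESG} as recalled in the introduction), this already yields divergence of the Gaussian integral on $\SS_{1/k}$ for $H_{1/k}(B_k)$. This divergence for the equal-parameter algebra attached to $B_k$ will serve as the base case.

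To pass from $B_k$ to $B_N$ (for $3\le k\le N$, $k$ odd) and from $B_3$ to $F_4$, I would invoke Proposition~\ref{Res} in contrapositive form. That proposition together with the remark following it says that convergence of the Gaussian inner product on $\SS_c$ for $H_c(W)$ forces convergence on $\Res_b(\SS_c)$, and hence on the minimal submodule $\SS_c^{W_b}$ for $H_c(W_b)$, because the nonzero $\Res_b(\SS_c)$ contains the unique minimal submodule of the polynomial representation of $H_c(W_b)$. Contrapositively, divergence on $\SS_c^{W_b}$ for $H_c(W_b)$ yields divergence on $\SS_c$ for $H_c(W)$.

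Finally, I would make the appropriate choice of $b$ in each case. For $H_{1/k}(B_N)$ with $3\le k\le N$ and $k$ odd, I would take $b=(0,\ldots,0,b_{k+1},\ldots,b_N)\in\R^N$ with $b_{k+1},\ldots,b_N$ generic distinct positive reals; then $W_b$ is the copy of $B_k$ acting on the first $k$ coordinates, and the base case together with the contrapositive of Proposition~\ref{Res} delivers the divergence. For $H_{1/3}(F_4)$, I would use that $B_3$ arises as a rank-three parabolic subgroup of $F_4$ by removing an end node of its Coxeter diagram; for $b$ generic on the corresponding one-dimensional stratum one has $W_b=B_3$, and the constant multiplicity $c=1/3$ on $F_4$ restricts to multiplicity $1/3$ on every root of $B_3$. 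The main point to check carefully is the compatibility of the multiplicity parameters upon restriction (in particular the equal-parameter nature of the restricted algebra); the identification of the relevant parabolic subgroups themselves is immediate from the Coxeter diagrams.
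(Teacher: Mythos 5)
Your proposal is correct and takes essentially the same route as the paper: the paper obtains this corollary precisely by combining the preceding corollary (non-unitarity, hence divergence of the Gaussian integral, for the minimal submodule of $H_{1/k}(B_k)$ with $k\ge 3$ odd) with the contrapositive of Proposition~\ref{Res} applied to the parabolic subgroups $B_k\subset B_N$ and $B_3\subset F_4$. Your checks on the choice of the point $b$ and on the matching of the (equal) multiplicity parameters under restriction are exactly the details the paper leaves implicit.
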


The following statement was explained to us by P.\,Etingof.

\begin{proposition}\label{EtingofProp}\cite{Etpc}
The integral \mref{Gaussian-later} is not convergent on the minimal submodule $\SS_c$
for $H_c(E_7)$ when $c=1/10$.
\end{proposition}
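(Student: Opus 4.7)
The plan is to adapt the strategy of Proposition \ref{prgrif}. By \cite[Proposition 4.12]{ESG}, if the integral \mref{Gaussian-later} converges on $\SS_c$ then $\SS_c\cong L_{\tau_c}$ is a unitary module (equivalently, the contravariant form $(\cdot,\cdot)_{\tau_c}$ is positive semi-definite). Thus to prove divergence it is enough to exhibit $v\in\SS_c$ with $(v,v)_{\tau_c}<0$.

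The first step is to pin down the lowest homogeneous component of $\SS_c$ for $H_{1/10}(E_7)$, equivalently the irreducible $E_7$-representation $\tau_c$ and the degree $d_0$ at which it embeds in $\C[x]$. Two pieces of structure are helpful. Since $1/10=1/h_{D_6}$ for the parabolic subgroup $D_6\subset E_7$, the support of $\C[x]/\SS_c$ should contain the $E_7$-orbit of the $D_6$-fixed line $L_{D_6}$, so $\SS_c$ is a proper subideal of the corresponding parabolic ideal. Moreover, the restriction functor $\Res_b$ at a generic point $b$ of this line carries $\SS_c$ into a non-trivial submodule of the polynomial representation for $H_{1/10}(D_6)$, and by Proposition \ref{cox} this submodule is contained in the augmentation ideal in the restricted variables. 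Together with Griffeth's recursive description of singular vectors and standard $E_7$-character data, this gives enough control to identify $\tau_c$ and $d_0$ and to write down a concrete element $f$ in the lowest component.

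The second step is the Dunkl calculation. One picks a coordinate direction $\xi$ and verifies an identity of the form
$$\nabla_\xi(x_\xi f)=\lambda\, f$$
for an explicit rational scalar $\lambda=\lambda(c)$ depending on $c$ and on the $W$-action on $f$; this is a direct application of the Leibniz rule for \mref{Dunkl}. Using the contravariance relation $(x_i u, v)_{\tau_c}=(u, y_i v)_{\tau_c}$ together with $\phi(y_\xi)=\nabla_\xi$ one obtains
$$(x_\xi f, x_\xi f)_{\tau_c}=\lambda\,(f,f)_{\tau_c}.$$
Verifying that $\lambda<0$ at $c=1/10$ then shows that the form is not positive semi-definite and the integral must diverge by the contrapositive of \cite[Proposition 4.12]{ESG}.

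The main obstacle is the first step: unlike the $c=1/h_W$ case where $\tau_c$ is simply the reflection representation and $d_0=1$, for $(E_7,c=1/10)$ the representation $\tau_c$ is not apparent a priori, and locating a manageable element $f$ in the lowest component requires nontrivial $E_7$ representation-theoretic input. Once $f$ and $\xi$ are fixed, the Dunkl computation producing $\lambda$ is routine and mirrors the short calculation in the proof of Proposition \ref{prgrif}; the coincidence $h_{D_6}=10=1/c$ makes it plausible that $\lambda<0$ emerges naturally here, just as the analogous bound $\lambda=4/N-1<0$ did for $D_N$ with odd $N\ge 5$.
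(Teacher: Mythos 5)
Your proposal is not a proof but a programme, and its two essential steps are exactly the ones you leave undone: you never identify the lowest homogeneous component of $\SS_{1/10}$ for $E_7$ (you yourself call this ``the main obstacle''), and you never compute the scalar $\lambda$ or verify $\lambda<0$ --- you only say it is ``plausible''. There is also a structural problem with the strategy itself. You aim to prove divergence by first proving non-unitarity (via a negative-norm vector) and then invoking the contrapositive of \cite[Proposition 4.12]{ESG}. That would establish a strictly stronger statement than the proposition claims; the paper asserts only divergence of the integral for $H_{1/10}(E_7)$ and nowhere claims that this module is non-unitary (divergence of the Gaussian integral does not imply non-unitarity). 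So even if your Dunkl computation could be completed, you would be betting the proof on a fact that is not known to hold and is not needed.

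The paper's argument is much more direct and purely geometric, in the spirit of Corollary~\ref{corollary:integrable} rather than of Proposition~\ref{prgrif}. Take $b\in\R^7$ with stabilizer $E_6\subset E_7$ and let $L$ be the line through $b$, so $\codim(L)=6$. Since $c=1/10$ is not a singular value for $E_6$, the restriction $\Res_b(\C[x]/\SS_c)$ cannot be a non-trivial quotient of the polynomial representation of $H_{1/10}(E_6)$, hence some $p\in\SS_c$ satisfies $p(b)\neq 0$, i.e.\ $\mult_L(p)=0$. Convergence near a generic point of $L$ would require
$$
\frac{1}{10}<\frac{\frac12\codim(L)+\mult_L(p)}{K(L)}=\frac{3}{36}=\frac{1}{12},
$$
where $K(L)=36$ is the number of positive roots of $E_6$; this fails, and since the poles are supported on real hyperplanes the failure of this inequality genuinely forces divergence (cf.\ Remark~\ref{remark:reverse}). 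The only representation-theoretic input is the non-vanishing of $\SS_c$ at $b$, which is far cheaper than determining $\tau_c$ and its lowest-degree realization. If you want to salvage your approach, you would need to actually exhibit the singular vector $f$ and carry out the Dunkl computation; as written, the proposal does not constitute a proof.
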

\begin{proof}
Let $b\in \R^7$ be a point such that its stabilizer is isomorphic to
the subgroup $E_6\subset E_7$. Note first that there are elements $p
\in \SS_c$ such that $p(b)\ne 0$. Indeed, otherwise
$b \in \supp (\C[x]/\SS_c)$
and $\Res_b (\C[x]/\SS_c)$ is a non-trivial factor of
the polynomial representation for $H_c(E_6)$. But this is not
possible since $c=1/10$ is not a singular value for $H_c(E_6)$.

Let $L$ be the one-dimensional linear space containing $b$, so
that one has
$\codim(L)=\nlb 6$. By above there are elements $p \in \SS_c$ such that
$p(b)\neq 0$ and thus
$\mult_L(p)=0$. For the convergence of the Gaussian product on $p$ we
need to have \beq{e7} \frac{1}{10}<\frac {3}{K(L)}, \eeq where
$K(L)$ is then equal to the number of positive roots in $E_6$, so
$K(L)=36$. Thus \mref{e7} fails.
\end{proof}

\begin{proposition}
The integral \mref{Gaussian-later} is not convergent on the minimal submodules
for $H_{1/9}(E_7)$, $H_{1/9}(E_8)$, $H_{1/7}(E_7)$, $H_{1/15}(E_8)$.
\end{proposition}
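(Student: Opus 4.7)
The plan is to adapt the argument of Proposition~\ref{EtingofProp}: in each of the four cases I will produce a point $b\in\R^N$ whose stabilizer $W_b\subset W$ is a parabolic subgroup such that (i) $c$ is not a singular value for $H_c(W_b)$, and (ii) the sufficient convergence condition of Corollary~\ref{corollary:integrable-equal-c} fails at $L=L_b:=\mathrm{Fix}(W_b)$. Divergence of the Gaussian integral on $\SS_c$ will then follow from Remark~\ref{remark:reverse}, which supplies the converse to local $L^2$-integrability in the case of real hyperplane arrangements.

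Specifically, I would take $W_b=D_6\subset E_7$ for both $H_{1/9}(E_7)$ and $H_{1/7}(E_7)$, take $W_b=D_7\subset E_8$ for $H_{1/9}(E_8)$, and take $W_b=E_7\subset E_8$ for $H_{1/15}(E_8)$, each realized as a standard maximal parabolic subgroup obtained by removing one node of the corresponding Dynkin diagram. A generic point $b\in\mathrm{Fix}(W_b)$ has stabilizer precisely $W_b$, so that $\codim(L_b)=\rk(W_b)$ and $K(L_b)=|\mathcal R_+(W_b)|$; once we know that $m_{\SS_c}(L_b)=0$, the required failure of the klt-inequality at $L_b$ reduces to the numerical checks $1/9\ge 6/60=1/10$, $1/9\ge 7/84=1/12$, $1/7\ge 1/10$, and $1/15\ge 7/126=1/18$, all of which are trivial.

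The essential input is the non-singularity of $c$ for the chosen parabolic: by the Dunkl--de Jeu--Opdam classification, for a finite Coxeter group $W'$ and constant $c>0$ the polynomial representation of $H_c(W')$ is reducible iff $c=j/d$ for some fundamental degree $d$ of $W'$ and some integer $j$ with $1\le j<d$. Comparing with the degrees $(2,4,6,6,8,10)$ of $D_6$, $(2,4,6,7,8,10,12)$ of $D_7$, and $(2,6,8,10,12,14,18)$ of $E_7$, one verifies at a glance that no degree of $D_6$ is a positive multiple of $9$ or of $7$, no degree of $D_7$ is a multiple of $9$, and no degree of $E_7$ is a multiple of $15$. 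Hence the polynomial representation of the corresponding $H_c(W_b)$ is irreducible, and the restriction-functor argument used in the proof of Proposition~\ref{EtingofProp} (applied to the functor of~\cite{BE}) yields some $p\in\SS_c$ with $p(b)\ne 0$, giving $m_{\SS_c}(L_b)=0$.

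The main obstacle is the non-singularity check for $W_b=E_7$ in the last case: one cannot appeal to the classical module constructions $I_k^{\pm}$, $K_{k,s}$, or $J_k$, so the classification of singular multiplicities for the exceptional group $E_7$ has to be invoked. It is worth noting that $1/9$ and $1/7$ \emph{are} singular for $E_7$ itself (being equal to $2/18$ and $2/14$, with $14,18$ among the degrees of $E_7$), which is precisely why the classical parabolics $D_6$ and $D_7$ must be used in the first three cases, whereas $E_7$ is a valid choice only for $c=1/15$ because $15$ divides none of the degrees of $E_7$.
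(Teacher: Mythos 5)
Your proposal is correct and follows essentially the same route as the paper's proof, which likewise adapts Proposition~\ref{EtingofProp} by choosing a parabolic subgroup $W_b$ for which $c$ is non-singular and checking that $c\ge \codim(L_b)/(2K(L_b))$. The only difference is the specific parabolics in two cases (you use $D_7\subset E_8$ and $D_6\subset E_7$ where the paper uses $D_6\subset E_7\subset E_8$ and $D_5\subset E_7$), and both choices pass the same non-singularity and numerical checks.
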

The proof is parallel to the proof of Proposition \ref{EtingofProp}
where one takes $L$ of codimension $6$ stabilized by the
parabolic subgroup $D_6 \subset E_7 \subset E_8$ for the first two cases.
One can take $L$ stabilized by the parabolic $D_5 \subset E_7$
for the case of $H_{1/7}(E_7)$,
and one can take $L$ stabilized by
the parabolic $E_7\subset E_8$ in the last case.

The following statement follows from the Proposition \ref{Res}
and from Propositions \ref{prgrif}, \ref{EtingofProp}.
\begin{proposition}
The integral \mref{Gaussian-later} is not convergent on the minimal submodules for $H_{1/m}(D_N)$ where $5 \le m \le N$, $m$ is odd, for $H_{1/10}(E_8)$, $H_{1/7}(E_8)$, $H_{1/5}(E_8)$, $H_{1/5}(E_7)$, $H_{1/5}(E_6)$.
\end{proposition}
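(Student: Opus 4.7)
\medskip\noindent
\textbf{Proof plan.}
The strategy is a uniform reduction to Propositions~\ref{Res}, \ref{prgrif}, and \ref{EtingofProp} via the contrapositive of Proposition~\ref{Res}. First I would convert the non-unitarity statement of Proposition~\ref{prgrif} into a divergence statement for $\gamma_c$: combining the identity $(f,f)_{\tau_c}=\lambda\gamma_c(e^{-\frac12\sum\nabla_i^2}f)$ recalled in the introduction with the pointwise positivity $\gamma_c\ge 0$ and the fact that $e^{-\frac12\sum\nabla_i^2}$ is a bijection of $\SS_c$ (since the Dunkl operators act locally nilpotently), convergence of $\gamma_c$ on the irreducible $\SS_c\cong L_{\tau_c}$ would force the contravariant Hermitian form to be semi-definite, hence (by non-degeneracy on an irreducible module, which rules out an identically isotropic Hermitian form) definite, and after adjusting the sign of $\lambda$ positive definite, so $\SS_c$ would be unitary. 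Thus non-unitarity of $\SS_c$ is equivalent to divergence of $\gamma_c$ on $\SS_c$. The contrapositive of Proposition~\ref{Res} together with the note following it then yields the upward propagation principle: if $\gamma_c$ diverges on the minimal submodule of $H_c(W_b)$ for some $b\in\R^N$, then it diverges on the minimal submodule of $H_c(W)$.

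Given this, the proposition reduces to producing, in each case, a point whose stabilizer realizes a parabolic of the ambient group at which divergence is already known. For $H_{1/m}(D_N)$ with $5\le m\le N$ odd, take $b=(b_1,\ldots,b_{N-m},0,\ldots,0)\in\R^N$ with $b_1,\ldots,b_{N-m}$ distinct and nonzero; then $W_b$ is the standard $D_m$ acting on the last $m$ coordinates, and Proposition~\ref{prgrif} applies. The exceptional cases are handled in the same way: for $H_{1/10}(E_8)$ take $b$ with $W_b=E_7\subset E_8$ and apply Proposition~\ref{EtingofProp}; for $H_{1/7}(E_8)$ take $b$ with $W_b=D_7\subset E_8$ (the standard Levi from the spin node) and apply Proposition~\ref{prgrif}; and for $H_{1/5}(E_n)$ with $n=6,7,8$ take $b$ with $W_b=D_5$, which is a Levi subgroup of each such $E_n$, and once more apply Proposition~\ref{prgrif}.

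The only delicate point is the semi-definite-implies-definite step used to convert non-unitarity into divergence of $\gamma_c$. Everything else is routine: the required parabolic subgroups $D_m\subset D_N$, $E_7,D_7,D_5\subset E_n$ are standard Levi subgroups coming from connected subdiagrams of the corresponding Dynkin diagrams, and are manifestly realized as pointwise stabilizers of generic vectors in the appropriate coordinate subspaces or walls of the Weyl chamber. I do not expect any serious obstacle, since the substantive analytic and representation-theoretic work has already been carried out in Propositions~\ref{Res}, \ref{prgrif}, and \ref{EtingofProp}.
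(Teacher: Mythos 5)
Your proposal is correct and is essentially the paper's intended argument: the paper proves this proposition by the one-line remark that it follows from Propositions~\ref{Res}, \ref{prgrif} and \ref{EtingofProp}, i.e.\ exactly your reduction via the contrapositive of Proposition~\ref{Res} to the parabolic subgroups $D_m\subset D_N$, $E_7\subset E_8$, $D_7\subset E_8$ and $D_5\subset E_6\subset E_7\subset E_8$, together with the standard conversion of non-unitarity into divergence of $\gamma_c$ (which the paper takes from \cite[Proposition~4.12]{ESG} and which you correctly re-derive). No gap.
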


\vspace{5mm}

{\bf Funding.} 
M.F. was partially supported by the 
Engineering and Physical Sciences Research Council (grant number
EP/F032889/1). 
M.F. also acknowledges support of the 
British Council (PMI2 Research Cooperation award). C.S. was
partially supported by the Russian 
Foundation for Basic Research (grant numbers 08-01-00395-a,
11-01-00185-a, 11-01-00336-a), grant N.Sh.-4713.2010.1 and by AG 
Laboratory GU-HSE, RF government grant
11 11.G34.31.0023.

\vspace{5mm}

{\bf Acknowledgements.} We are very grateful to P.\,Etingof
for attracting our attention to the problem and for helpful
discussions at different stages of the work. We are also very grateful to S.\,Griffeth for stimulating discussions and explanations, and 
to I.\,Cheltsov, J.\,Koll\'ar and Yu.\,Prokhorov for useful advice.

\end{document}